\newcommand{\Exp}{\mathbb{E}}
\newcommand{\e}{\mathrm{e}}
\newcommand{\eps}{\varepsilon}
\renewcommand{\P}{\mathbb{P}}
\newcommand{\Q}{\mathbb{Q}}
\newcommand{\Bin}{\mathrm{Bin}}
\newtheorem{theorem}{Theorem}
\newtheorem{lemma}{Lemma}
\newtheorem{definition}{Definition}
\newtheorem{conjecture}{Conjecture}
\setlist[enumerate,1]{label=(\arabic*),noitemsep}
\newcommand{\iiddistr}{{\stackrel{\text{\iid}}{\sim}}}
\newcommand{\reals}{{\mathbb{R}}}
\newcommand{\naturals}{{\mathbb{N}}}
\newcommand{\supp}{\mathrm{supp}}
\newcommand{\identity}{\mathds I}
\newcommand{\allones}{\mathds J}
\newcommand{\diff}{{\rm d}}
\newcommand{\expect}[1]{\Exp\!\left[ #1 \right]}
\newcommand{\prob}[1]{ \mathbb{P}\left\{ #1 \right\} }
\newcommand{\Binom}{\mathrm{Bin}}
\newcommand{\Hyp}{\mathrm{Hyp}}
\newcommand{\eg}{e.g.\xspace}
\newcommand{\ie}{i.e.\xspace}
\newcommand{\iid}{i.i.d.\xspace}
\newcommand{\norm}[1]{\left\|{#1} \right\|}
\newcommand{\inner}[2]{\langle #1, #2 \rangle}
\newcommand{\indc}[1]{{\mathbf{1}_{\left\{{#1}\right\}}}}
\newcommand{\calD}{{\mathcal{D}}}
\newcommand{\calF}{{\mathcal{F}}}
\newcommand{\calN}{{\mathcal{N}}}
\newcommand{\calT}{{\mathcal{T}}}
\newcommand{\calV}{{\mathcal{V}}}
\newcommand{\calW}{{\mathcal{W}}}
\DeclareMathAlphabet{\varmathbb}{U}{bbold}{m}{n}
\newcommand{\argmax}{\mathrm{argmax}}
\newcommand{\dX}{\mathrm{d}X}
\renewcommand{\hat}{\widehat}
\renewcommand{\tilde}{\widetilde}
\newcommand{\contig}{\trianglelefteq}
\newcommand{\R}{\mathbb{R}}
\newcommand{\J}{\mathbb{J}}
\DeclareMathOperator{\Tr}{Tr}
\newcommand{\bm}{\boldsymbol}
\newcommand{\planted}{\sigma_{0}}
\newcommand{\score}{\calT}
\newcommand{\up}{\textup{upper}}
\newcommand{\low}{\textup{lower}}
\newcommand{\indicator}[1]{\bm{1}_{#1}}
\newcommand{\MMSE}{{\rm MMSE}}
\newcommand{\snr}{{\mathsf{snr}}}
\newcommand{\Keywords}[1]{\par\noindent 
{\small{\em Keywords\/}: #1}}
\newcommand{\ER}{Erd\H{o}s-R\'{e}nyi\,}
\begin{document}

\title{Information-theoretic bounds and phase transitions in clustering, sparse PCA, and submatrix localization}
\author{Jess Banks \and Cristopher Moore \and Roman Vershynin  \and Nicolas Verzelen \and Jiaming Xu
\thanks{J. Banks is with the Department of Mathematics, University of California, Berkeley, Berkeley, CA, \texttt{jess.m.banks@berkeley.edu}.
C. Moore is with the Santa Fe Institute, Santa Fe, NM, \texttt{moore@santafe.edu}.
R.~Vershynin is with the Department of Mathematics, University of Michigan, Ann Arbor, MI, \texttt{romanv@umich.edu}. 
N. Verzelen is with UMR 729 MISTEA, INRA, Montpellier.
J.~Xu is with the Krannert School of Management, Purdue University, West Lafayette, IN 47907,
\texttt{xu972@purdue.edu}.  
This work was in part presented at the 54th Annual Allerton Conference on Communication, Control, and Computing, 
September 27-30, 2016, Monticello, IL, USA.
}
}

\maketitle

\begin{abstract}
	We study the problem of detecting a structured, low-rank signal matrix corrupted with additive Gaussian noise.  This includes clustering in a Gaussian mixture model, sparse PCA, and submatrix localization.  Each of these problems is conjectured to exhibit a sharp information-theoretic threshold, below which the signal is too weak for any algorithm to detect.  We derive upper and lower bounds on these thresholds by applying the first and second moment methods to the likelihood ratio between these ``planted models'' and null models where the signal matrix is zero.  
For sparse PCA  and  submatrix localization, we determine this threshold exactly in the limit where the number of blocks is large or the signal matrix is very sparse; for the clustering problem, our bounds differ by a factor of $\sqrt{2}$ when the number of clusters is large.
	Moreover, our upper bounds show that for each of these problems there is a significant regime where reliable detection is information-theoretically possible but where  known algorithms such as PCA fail completely, since the spectrum of the observed matrix is uninformative.  This regime is analogous to the conjectured `hard but detectable' regime for community detection in sparse graphs.

\end{abstract}
\Keywords{First and second moment methods, clustering, information-theoretic bounds, sparse PCA, submatrix localization }

\section{Introduction}
\label{sec:intro}

Many problems in machine learning, signal processing, and statistical inference have a common, unifying goal: reconstruct a low-rank signal matrix observed through a noisy channel. This framework can encompass a wide range of tasks as we vary the channel and low-rank signal, but we focus here on the case where the noise is additive and Gaussian, and the signal is relatively weak in comparison to the noise.  
To be precise, suppose we are given an $m \times n$ data matrix
\begin{align}
	X = M + W  
	\quad \text{with} \quad
	M = \frac{\snr}{\sqrt{n} } U V^\dagger \, ,
	\label{eq:planted_model}
\end{align}
where $\snr$ is a fixed parameter characterizing the signal-to-noise ratio, 
$U \in \reals^{m \times k}$ and $V \in \reals^{n \times k}$ are generated from some known prior distribution independent of $n$, 
and $W \in \reals^{m \times n}$ is a noise matrix whose entries are independent Gaussians with unit variance. We will refer to this as the \emph{planted model}: it consists of a noisy observation $X$ of a signal matrix $M$ of rank $k$, and may possess additional structure through the priors on $U$ and $V$.

Given the observed matrix $X$, the problem of interest is to reconstruct $M$, or at least detect that it exists. For simplicity, we will work in the Bayes-optimal case where model parameters such as the true rank and signal-to-noise ratio are known to the estimators. In the low signal-to-noise ratio regime we consider, exact reconstruction of $M$ is fundamentally impossible (see \S2 for more details).  Instead, we focus on the following two tasks: first, detecting that the signal $M$ exists, i.e., telling with high probability whether $X$ was indeed generated by the planted model as opposed to a \emph{null model} where $M=0$ and $X$ consists only of noise; and second, reconstructing $M$ to some accuracy better than chance.  We define these tasks formally as follows.

\begin{definition}[Detection]
Let $\P(X)$ be the distribution of $X$ in the planted model~\prettyref{eq:planted_model}, and denote by $\Q(X)$ the distribution of $X$ in the null model where $X=W$.  A test statistic $\calT(X)$ with a threshold $\epsilon$ achieves detection if $\lim_{n \to \infty} \left[ \P( \calT(X) < \epsilon) + \Q(\calT(X) \ge \tau) \right] = 0$, so that the criterion $\calT(X) \ge \epsilon$ determines with high probability whether $X$ is drawn from $\P$ or $\Q$.
\end{definition}

\begin{definition}[Reconstruction]
An estimator $\hat{M} = \hat{M}(X)$ achieves reconstruction if 
$ \Exp_{X} \| \hat{M}\|_F^2 = O(n)$ and 
there exists a constant $\epsilon>0$ such that 
$\lim_{n \to \infty} (1/n) \Exp_{M,X} \inner{M}{\hat{M}} \ge \epsilon$,
where $\inner{A}{B} = \Tr A^\dagger B$ denotes the matrix inner product and $\|A\|_F^2=\inner{A}{A}.$
\end{definition}

\noindent 
For many natural problems in this class, it is believed that there is a phase transition, i.e., a threshold value of $\snr$ below which both tasks are information-theoretically impossible: no test statistic can distinguish the null and planted models, and no estimator can beat the trivial one $\hat{M}=0$.  This threshold is known as the \emph{information-theoretic} threshold and it also depends on the structure of the problem, i.e., on the priors of $U$ and $V$; if this prior is more strongly structured, we expect the threshold to be lower.

We focus on three cases of~\prettyref{eq:planted_model} which arise in many applications.  In \emph{Sparse PCA}, $k=1$ and $U=V=v$ for some vector $v$.  We further assume that $v$ is sparse, with a constant fraction of nonzero entries. This corresponds to the \emph{sparse, spiked} Wigner model of~\cite{MontanariPCA14,lesieur2015phase}.
In \emph{Submatrix Localization} (also known as submatrix detection and noisy clustering), 
$U=V$ and $M$ contains $k \ge 2$ distinct blocks of elevated mean. This model arises in the analysis of social networks and gene expression, see \eg 
\cite{kolar2011submatrix,butucea2013,HajekWuXu_MP_submat15}; 
it can also be thought of as a Gaussian version of the stochastic block model~\cite{decelle-etal-a,decelle-etal-b}. Finally, in \emph{Gaussian Mixture Clustering}, there are $k \ge 2$ clusters, and each row of $M$ is the center of the cluster to which the corresponding data point belongs. This model has been widely studied, see, \eg,~\cite{Vempala04,Srebro06,KannanVempala09,kannan2008spectral,achlioptas2005spectral}. 

For each of these three problems, our goal is to compute the information-theoretic threshold, and understand how it scales with the parameters of the problem: for instance, the sparsity of the underlying signal or the number of clusters.  In particular, a simple upper bound on the information-theoretic threshold for each of these problems is the point at which spectral algorithms succeed, i.e., the point at which the likely spectrum of $X$ becomes distinguishable from the spectrum of the random matrix $W$.  The spectral thresholds for our problems are well known from the theory of Gaussian matrices with low-rank perturbations.  However, based on compelling but non-rigorous arguments from statistical physics (e.g.~\cite{lesieur2015mmse,clustering}), it has been conjectured that when the signal is sufficiently sparse, or its rank (the number of clusters or blocks) is sufficiently large, the information-theoretic threshold falls strictly below the spectral one.


In this paper, we prove upper and lower bounds on information-theoretic thresholds of all three problems, determining the threshold within a multiplicative constant in interesting regimes.  
For sparse PCA, we determine the precise threshold in the limit where the signal matrix is very sparse; similarly, for the submatrix localization problem, we determine the threshold when the number of blocks is large.  For the clustering problem, our bounds differ by a factor of $\sqrt{2}$ in the limit where the number of clusters is large.
Moreover, our results verify the conjecture that the information-theoretic threshold dips below the spectral one when the signal is sufficiently sparse, or when the number of clusters or blocks is sufficiently large.  This corresponds to recent results~\cite{AbbeSandon16,banks-etal-colt,ChenXu14,HajekWuXu_one_info_lim15} showing that, in the stochastic block model, the information-theoretic detectability threshold falls below the Kesten-Stigum bound above which efficient spectral and message-passing algorithms succeed~\cite{decelle-etal-a,decelle-etal-b,mossel-neeman-sly-colt,krzakala-etal-nonbacktracking,bordenave-lelarge-massoulie}.  We consider this evidence for the conjecture that these problems posses a `hard but detectable' regime where detection and reconstruction are information-theoretically possible but take at least exponential time. Although our computations are specific to these models, our proof techniques are quite general and may be applied with mild adjustment to a broad range of similar problems. We present our results in \S2, an overview of our proof techniques in \S3, and full proofs in in \S4. 

Since the initial posting of this paper as an arXiv preprint, a number of interesting papers~\cite{PerryWeinBandeiraMoitra16,PerryWeinBandeira16,LelargeMiolane16} have appeared, some
extending or improving our results.  Sharp lower bounds for sparse PCA were also obtained recently in~\cite{PerryWeinBandeira16} using a conditional second moment method similar to ours. 
Complete, but not explicit, characterizations of information-theoretic reconstruction thresholds were obtained in~\cite{KrzakalaXuZdeborova16,LelargeMiolane16} for sparse PCA and submatrix localization through the Guerra interpolation technique and cavity method.  However, their characterization of reconstruction thresholds does not directly apply to detection.


\section{Models and results}

\subsection{Sparse PCA}
\label{sec:sparsepca}
Consider the following \emph{spiked Wigner} model, where the underlying signal is a rank-one matrix:
\begin{align} \label{eq:sparse_pca_def}
	X = \frac{\lambda}{\sqrt n} vv^\dagger + W \, ,
\end{align}
Here, $v \in \R^n$, $\lambda > 0$ and $W \in \R^{n \times n}$ is a Wigner random matrix with $W_{ii} \iiddistr \calN(0, 2)$ 
and $W_{ij}=W_{ij} \iiddistr \calN(0, 1)$ for $i<j$.  We assume $v$ is drawn from the sparse Rademacher prior, although many alternatives may be imposed.  Specifically, for some $\gamma \in [0,1]$ the support of $v$ is drawn uniformly from all ${n \choose \gamma n}$ subsets $S \subset [n]$ with $|S| = \gamma n$ (when $n$ is finite, we assume that $\gamma n$ is an integer).  Once the support is chosen, each nonzero component $v_i$ is drawn independently and uniformly from $\{\pm \gamma^{-1/2} \}$, so that 
$\|v\|_2^2 = n$.   When $\gamma$ is small, the data matrix $X$ is a sparse, rank-one matrix observed through Gaussian noise. 

One natural approach for this problem is PCA: that is, diagonalize $X$ and use its leading eigenvector $\hat{v}$ as an estimate of $v$.  The threshold at which this algorithm succeeds can be computed using the theory of random matrices with rank-one perturbations~\cite{baik2005phase,peche2006largest,benaych2011eigenvalues}:
\begin{enumerate}
	\item When $\lambda > 1$, the leading eigenvalue of $X/\sqrt{n}$ converges almost surely to $\lambda + \lambda^{-1}$, and $\inner{v}{\hat{v}}^2$ converges almost surely to $1-\lambda^{-2}$; thus PCA succeeds in reconstructing better than chance; 

	\item When $\lambda  \le 1$, the leading eigenvalue of $X/\sqrt{n}$ converges almost surely to $2$, and $\inner{v}{\hat{v}}^2$ converges almost surely to $0$; thus PCA fails to reconstruct better than chance.
\end{enumerate}

Because the leading eigenvalue of $W$ is 2 w.h.p., detection is only possible when $\lambda > 1$. Intuitively, PCA only exploits the low-rank structure of the underlying signal, and not the sparsity of $v$; it is natural to ask whether one can succeed in detection or reconstruction for some $\lambda < 1$ by taking advantage of this additional structure. 
Through analysis of an approximate message-passing algorithm and the free energy, the  following conjecture was made in statistical physics~\cite{lesieur2015phase,KrzakalaXuZdeborova16}:

\begin{conjecture}\label{conj:sparse_pca}
	Let the computational threshold be the minimum of $\lambda$ so that reconstruction or detection can be attained in polynomial-time in $n$ for a given $\gamma.$ There exists $\gamma^\ast \in (0,1)$ such that 
\begin{enumerate}
	\item If $\gamma \ge \gamma^\ast$, then  both the information-theoretic and computational thresholds are given by $\lambda = 1$.
	\item If $\gamma < \gamma^\ast$, then the computational threshold is given by $\lambda = 1$, but the information-theoretic threshold for $\lambda$ is strictly smaller.  
\end{enumerate}
\end{conjecture}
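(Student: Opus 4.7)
The plan is to split the conjecture into a statistical side, controlled by the likelihood ratio $L(X)=d\P/d\Q$, and a computational side, which sits under the BBP spectral transition~\cite{baik2005phase} on top and under a hardness conjecture on the bottom. Concretely this reduces to: (a)~$\lambda_{\rm IT}(\gamma)=1$ for $\gamma$ close to $1$; (b)~$\lambda_{\rm IT}(\gamma)<1$ for $\gamma$ close to $0$; (c)~existence of a sharp crossover $\gamma^\ast$; and (d)~computational hardness of detection in the interval $\lambda\in(\lambda_{\rm IT}(\gamma),1)$. I would attack (a)--(c) via first and second moment methods and leave (d) as a conjectural hardness claim.

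For (a) and any lower bound on $\lambda_{\rm IT}(\gamma)$, I would apply the second moment method. A direct Gaussian calculation with the planted--null structure yields
\[
\Exp_\Q\!\left[L(X)^2\right] \;=\; \Exp_{v,v'}\!\exp\!\left(\frac{\lambda^2}{2n}\langle v, v'\rangle^2\right),
\]
where $v,v'$ are iid draws from the sparse Rademacher prior. In the dense regime $\langle v, v'\rangle/\sqrt{n}$ is sub-Gaussian and the right side is $O(1)$ for every $\lambda<1$; Le Cam then rules out detection, and standard arguments in symmetric planted models rule out reconstruction, so $\lambda_{\rm IT}(\gamma)=1$ there. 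For small $\gamma$ the bare second moment diverges well below $\lambda=1$ because large-overlap pairs give $\langle v,v'\rangle^2/n$ heavy tails. The remedy is a conditional second moment method: restrict $\P$ to an event $\calE=\{|\langle v,v'\rangle|\le t\sqrt{n}\}$ with a suitable cutoff $t=t(\lambda,\gamma)$, verify $\|\P-\P_\calE\|_{\rm TV}\to 0$ via a first moment estimate, and bound $\Exp_\Q[(d\P_\calE/d\Q)^2]=O(1)$ for $\lambda$ up to some $\lambda_\star(\gamma)<1$.

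For the achievability side (b), I would construct an exponential-time scan statistic: reject the null when
\[
\max_{|S|=\gamma n}\,\max_{\sigma\in\{\pm1\}^S}\,\frac{1}{\gamma n}\sum_{i,j\in S}\sigma_i\sigma_j X_{ij}
\]
exceeds a threshold chosen so that a union bound over the $\binom{n}{\gamma n}\,2^{\gamma n}$ candidates controls false alarms under $\Q$, while under $\P$ the planted choice $(\supp v,\,\mathrm{sign}\,v)$ already exceeds this threshold once $\lambda$ crosses a critical value $\lambda^\up(\gamma)$. Straightforward Gaussian concentration shows that $\lambda^\up(\gamma)\to 0$ as $\gamma\to 0$, in particular $\lambda^\up(\gamma)<1$ for small enough $\gamma$. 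Combined with the conditional-second-moment lower bound from the previous paragraph, this should pin down $\lambda_{\rm IT}(\gamma)$ up to a multiplicative constant in the very sparse regime. For (c), the existence of a single crossover $\gamma^\ast$ would follow from monotonicity of $\lambda_{\rm IT}(\gamma)$ in $\gamma$, which is intuitive but calls for a Guerra-type interpolation argument between priors of different sparsity.

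The main obstacle is (d), the computational half. The inequality $\lambda_{\rm comp}(\gamma)\le 1$ is immediate from BBP applied to $X/\sqrt{n}$, but ruling out all polynomial-time detectors when $\lambda\in(\lambda_{\rm IT}(\gamma),1)$ is a genuine hardness statement that moment methods cannot access: they treat $\P$ and $\Q$ as full probability measures with no hook for algorithmic complexity. Plausible progress would go through restricted classes of algorithms --- AMP state evolution, or the low-degree likelihood ratio --- or through an average-case reduction from a problem such as planted clique. A full proof is almost certainly out of reach with current techniques, which is precisely why the statement stands as a conjecture rather than a theorem.
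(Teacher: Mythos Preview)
Your overall framing is exactly right and matches the paper: the statement is a conjecture, the computational half~(d) is out of reach, and what can be proved is the information-theoretic piece via first and second moment methods on the likelihood ratio. The paper does precisely this (its Theorem~1), obtaining $\lambda_{\rm IT}(\gamma)=1$ for $\gamma\ge 0.6$ from an unconditioned second moment, and $\lambda_{\rm IT}(\gamma)<1$ for small $\gamma$ from the scan statistic you describe, with $\lambda^{\up}=2\sqrt{h(\gamma)+\gamma\log 2}$. The paper explicitly notes it does not pin down $\gamma^\ast$ and does not address the computational claim, so your caveats on~(c) and~(d) are appropriate.

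There is, however, a genuine gap in your conditional second moment step. You propose to ``restrict $\P$ to an event $\calE=\{|\langle v,v'\rangle|\le t\sqrt{n}\}$'', but $\P$ involves a \emph{single} planted vector $v$; there is no $v'$ to condition on. An event involving two independent replicas is not an event in the sample space of $\P$, so $\P_\calE$ is not defined, and simply dropping large-overlap terms from the expansion $\Exp_\Q[L^2]=\Exp_{v,v'}[\cdots]$ gives a lower bound on the second moment, which is useless for contiguity. Moreover, for the sparse Rademacher prior every $v$ in the support is equivalent by symmetry, so there is no nontrivial ``typical $v$'' event to condition on either. The paper's fix is different in kind: it conditions on a \emph{noise} event depending on $(v,X)$, namely
\[
\Gamma_v=\Bigl\{\bigl\|(X-\tfrac{\lambda}{\sqrt n}vv^\dagger)_{\supp(v)}\bigr\|_2\le 2.1\sqrt{\gamma n}\Bigr\},
\]
which holds with probability $1-o(1)$ under $\P(\cdot\mid v)$ by Wigner-matrix spectral concentration. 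Carrying this indicator through the Gaussian integral produces a correction term of the form $-\tfrac12\bigl(\tfrac{\eta\sqrt{z^2+t^2}}{2\sqrt n}-2.1\sqrt{\gamma n}\bigr)_+^2$ in the exponent, which kills exactly the large-overlap contributions that blow up the naive second moment and recovers the sharp constant $\lambda_c\sim 2\sqrt{-\gamma\log\gamma}$ as $\gamma\to 0$. Your instinct that conditioning is needed is correct; the event just has to live in the right space.
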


\noindent We derive the following upper and lower bounds on the information-theoretic threshold in terms of $\lambda$ and $\gamma$, and 
confirm that the threshold is $\lambda=1$ when $\gamma$ is relatively large and falls strictly below $\lambda = 1$ when $\gamma$ is sufficiently small.  Throughout, we use 
\[ 
h(\gamma) = -\gamma \log \gamma - (1-\gamma) \log (1-\gamma)
\]
to denote the entropy function, and $\calW(y)$ for the root $x$ of $x \e^x = y$. All our logarithms are natural.  
\begin{theorem}
\label{thm:sparse-pca}
Let
	\begin{align}
		\lambda^{\up} &= 2 \sqrt{h(\gamma) + \gamma \log 2} 
		\label{eq:sparse-pca-up} \\
		\lambda^{\low} 
		&=\begin{cases}
			1 & \gamma \ge 0.6 \\
			\sqrt{ 2 \gamma \,\calW\!\left( \frac{1}{2 \gamma\sqrt{\e}} \right)} & \e^{-41}/81 \le \gamma < 0.6 \\
			\sqrt{ 4 \gamma\left(- \log \gamma - 2.1\sqrt{-2 \log \gamma} - \frac{3}{2}\log\left(\frac{3\e}{1-\gamma}\right) \right)} & \gamma < \e^{-41}/81 \, .
		\end{cases}
		\label{eq:sparse-pca-low} 
	\end{align}
Then detection and reconstruction are information-theoretically possible when $\lambda > \lambda^{\up}$ and are impossible when $\lambda < \lambda^{\low}$. 
\end{theorem}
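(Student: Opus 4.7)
The plan is to bound the likelihood ratio $L(X) = \P(X)/\Q(X)$, which for the spiked Wigner model admits the closed form
\[
L(X) = \EE_v \exp\!\left( \frac{\lambda}{2\sqrt{n}}\, v^\top X v - \frac{\lambda^2}{4n}(v^\top v)^2 \right),
\]
and attack the upper and lower bounds with the first and second moment methods respectively. For $\lambda > \lambda^{\up}$, I would analyze the scan statistic $\calT(X) = \max_\sigma \sigma^\top X \sigma$ where $\sigma$ ranges over the support of the sparse Rademacher prior, whose cardinality $N = \binom{n}{\gamma n}2^{\gamma n}$ satisfies $\log N = n(h(\gamma)+\gamma\log 2) + o(n)$. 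Any fixed $\sigma$ with $\|\sigma\|^2 = n$ gives a Gaussian quadratic form $\sigma^\top W \sigma$ of variance $2n^2$ under $\Q$, so a union bound yields $\max_\sigma \sigma^\top W \sigma \le (1+o(1))\cdot 2n\sqrt{n(h(\gamma)+\gamma\log 2)}$; under $\P$, taking $\sigma = v$ contributes a signal of size $\lambda n^{3/2}$. Comparing these produces precisely $\lambda^{\up} = 2\sqrt{h(\gamma)+\gamma\log 2}$, and reconstruction follows from a parallel argument showing that the maximizer $\hat v$ of the same statistic correlates nontrivially with $v$.

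For the lower bound I would compute the second moment
\[
\EE_\Q[L^2] = \EE_{v,v'}\!\left[\exp\!\left(\tfrac{\lambda^2}{2n}(v^\top v')^2 \right)\right],
\]
where $v, v'$ are independent prior samples; boundedness of $\EE_\Q[L^2]$ as $n\to\infty$ implies contiguity of $\P$ with respect to $\Q$, which rules out consistent detection and, via a standard data-processing argument, also rules out reconstruction. The overlap decomposes as $v^\top v' = \gamma^{-1}\sum_{i \in S \cap S'} \sigma_i \sigma'_i$ where $K := |S \cap S'|$ is hypergeometric with mean $\gamma^2 n$ and, conditional on $K$, the inner sum is a symmetric Rademacher walk. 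I would stratify the expectation according to $K$, combine hypergeometric tail bounds with the subgaussian moment generating function of the conditional walk via $\EE[\exp(a Z^2)]$ type estimates, and extract the range of $\lambda$ for which the total is $O(1)$.

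The three regimes in~\eqref{eq:sparse-pca-low} correspond to progressively more delicate executions of this calculation. For $\gamma \ge 0.6$ the overlap concentrates tightly enough that the vanilla second moment stays bounded all the way up to the spectral threshold $\lambda = 1$. For intermediate $\gamma$, optimizing the Gaussian-type exponential moment of $(v^\top v')^2$ against its subgaussian tail produces the Lambert-$\calW$ expression. For very sparse $\gamma < \e^{-41}/81$, the naive second moment diverges well below the conjectured threshold because rare configurations with atypically large overlap dominate; there I would switch to a conditional second moment, restricting to the high-probability event $\{K \approx \gamma^2 n\}$ (and, if necessary, further controls such as the signed sums $\sum_{i \in S}\sigma_i$ being balanced), establishing that $\P$ restricted to this event has bounded second moment with respect to $\Q$, and then invoking Le Cam's contiguity criterion in conditioned form.

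The main obstacle is precisely this last step: the conditioning event must be typical under both $\P$ and $\Q$ so that contiguity on the restricted event transfers back to the full measures, yet stringent enough to suppress the heavy tail of the overlap, while the subgaussian moment bounds used afterward must be sharp enough to still give a useful threshold. The explicit constants $0.6$, $\e^{-41}/81$, and $2.1$ in the theorem statement will emerge from balancing these competing requirements in the concentration inequalities; I expect the conceptual picture (vanilla second moment vs.\ conditional second moment) to be clean, and most of the effort to lie in this quantitative bookkeeping.
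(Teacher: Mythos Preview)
Your upper bound and the first two regimes of the lower bound are essentially correct and match the paper's approach: the scan statistic and union bound give $\lambda^{\up}$, and the unconditioned second moment $\Exp_{v,v'}[\exp(\tfrac{\lambda^2}{2n}(v^\top v')^2)]$, stratified by the hypergeometric overlap $K$ and the conditional Rademacher sum, yields both the $\gamma \ge 0.6$ and the Lambert-$\calW$ bounds.

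The gap is in the very-sparse regime $\gamma < \e^{-41}/81$. Your proposed conditioning event $\{K \approx \gamma^2 n\}$ is not a valid event on which to condition the planted measure: $K = |S \cap S'|$ is a function of \emph{two independent replicas} that arise only inside the second-moment expansion, not an event in the sample space of $(X,v)$ under $\P$. The conditional second moment method requires an event $F$ measurable with respect to the planted model, so that one can form $\P'(X) = \Exp_v[\P(X\mid v)\,\indicator{F}]/\P(F)$ and then compute $\Exp_\Q[(\P'/\Q)^2]$. Conditioning each replica on a property of the signal $v$ alone (balanced support, balanced signs, etc.) cannot suppress the large-$K$ tail either: for any fixed ``nice'' support $S$, the rare event that an independent $S'$ overlaps heavily with it still occurs with the same probability and still dominates the moment.

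The paper's key idea, which you are missing, is to condition on the \emph{noise} rather than the signal: specifically on
\[
\Gamma_v = \Bigl\{ \bigl\| (X - \tfrac{\lambda}{\sqrt n} vv^\dagger)_{\supp(v)} \bigr\|_2 \le 2.1\sqrt{\gamma n} \Bigr\},
\]
i.e.\ that the Wigner noise restricted to the planted support has typical spectral norm. This event depends on both $X$ and $v$, is typical under $\P(\cdot\mid v)$ uniformly in $v$, and when fed through the second-moment calculation produces an additional negative term $-\tfrac{1}{2}\bigl(\tfrac{\eta\sqrt{z^2+t^2}}{2\sqrt n} - 2.1\sqrt{\gamma n}\bigr)_+^2$ in the exponent. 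It is precisely this correction, effective when $z$ is large, that converts the naive $2\gamma\log(1/\gamma)$ threshold into $4\gamma\log(1/\gamma)$ and accounts for the constant $2.1$ in the statement. Without this noise-conditioning mechanism the $\sqrt{2}$ gap between the vanilla second-moment bound and the first-moment upper bound cannot be closed. (A secondary point: transferring the conditional bound back to non-reconstruction requires a short argument comparing $D_{\mathrm{KL}}(\P\|\Q)$ and $D_{\mathrm{KL}}(\P'\|\Q)$ when the conditioning involves the noise, which the paper handles via an I-MMSE interpolation; this is not quite the ``standard data-processing argument'' you allude to.)
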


\noindent In our proof, we give tighter lower bounds, but these are analytically convenient. 

Note that $\lambda^{\up}$ falls below the spectral threshold $\lambda=1$ whenever $\gamma \le 0.054$, 
and $\lambda^{\low}$ matches the spectral threshold whenever $\gamma \ge 0.6$. Hence,
Theorem~\ref{thm:sparse-pca} proves Conjecture~\ref{conj:sparse_pca} on information-theoretic threshold, 
albeit without pinning down $\gamma^*$ exactly.
In addition, in the limit $\gamma \to 0$, both $\lambda^{\up}$ and $\lambda^{\low}$ give an information-theoretic threshold of
\begin{equation}
\label{eq:pca-tight}
\lambda_c = 2 \left(1+o_\gamma(1) \right) \sqrt{ - \gamma \log \gamma } \, , 
\end{equation}
determining the threshold fully in the limit where the low-rank matrix is very sparse.  
Independent of the present work, and building on our preprint~\cite{Banks16}, Perry et al.~\cite{PerryWeinBandeira16} 
obtained the same tight threshold in this limit with a smaller error term. 
Previous work~\cite{Cai2015} had determined that threshold scales as $\lambda = \Theta(\sqrt{ - \gamma \log \gamma})$ up to a constant factor.

In passing, we note that there is a very interesting line of work on  \emph{exact or approximate support reconstruction} for sparse PCA, \ie, estimating correctly or consistently the positions of non-zeros in $v$, in a regime where the size of the support is sublinear in $n$ (see \eg,~\cite{JohnstoneLu09,amini2009,berthet2013optimal,krauthgamer2015,deshpande2014sparse} and references therein)\footnote{To be precise, those references mostly study the spiked Wishart model: $X= (\lambda/\sqrt{n}) u v^\top +W$, where $u  \in \R^m$ with i.i.d. $\calN(0,1)$ entries and $W$ is a $m\times n$ Gaussian matrix with i.i.d. $\calN(0,1)$ entries; the results can be readily extended to the spiked Wigner model.}. 
In an influential paper~\cite{JohnstoneLu09}, it was shown that while the estimate via the classical PCA is inconsistent, a simple diagonal thresholding procedure consistently estimates $v$ provided that $v$ is sufficiently sparse.  Assuming $\lambda=\Theta(1)$, it is later proved in~\cite{amini2009} that diagonal thresholding exactly recovers the support of $v$ with high probability if $ \gamma \lesssim 1/ (n\log n)$, and the information-theoretic threshold for exact support recovery is given by $\gamma \asymp 1/\log n$ up to a constant factor. 
In contrast, we focus on the regime where the size of the support
is linear in $n$, \ie, $\gamma=\Theta(1)$, and $\lambda=\Theta(1)$. 
In this regime it is impossible to correctly or 
consistently estimate the support of $v$, and hence 
we instead focus on detection and reconstruction better than chance.

\subsection{Submatrix Localization}

In the submatrix localization problem, our task is to detect within a large Gaussian matrix a small block or blocks with atypical mean.  
Let $\sigma : [n] \to [k]$ be a \emph{balanced} partition, \ie one for which $|\sigma^{-1}(t)| = n/k$ for all $t\in[k]$, chosen uniformly from all such partitions. This terminology will recur throught the paper.
Construct a 
$n \times n$ matrix $Y$ such that $Y_{i,j} = \indicator{\sigma(i) = \sigma(j)}$. In the planted model, 
\begin{align} \label{eq:submatrix_def}
	X = \frac{\mu}{\sqrt n}\left(Y - \frac{1}{k} \allones \right) + W \, ,
\end{align}
where $W$ is again a Wigner matrix and $\allones$ is the all-ones matrix.  In the null model, $X=W$. The subtraction of $ \allones/k$  centers the signal matrix so that $\Exp X = 0$ in both the null and planted models. 
In the planted model, $ (\mu/\sqrt n)\left(Y - \allones/k \right)$ is a rank-$(k-1)$ matrix with the largest $(k-1)$ eigenvalues all equal to $\mu \sqrt{n}/k$, making $X$ a Wigner matrix with a rank-$(k-1)$ additive perturbation. Matrices of this type exhibit the following spectral phase transition:

\begin{enumerate}
	\item When $\mu > k$, the $k$ leading eigenvalues of $X/\sqrt{n}$ converge to $ \mu/k + k/\mu$ almost surely; 
	\item When $\mu \le k$, the $k$ leading eigenvalues of $X/\sqrt{n}$ converge to $2$ almost surely. 
\end{enumerate}

\noindent 
Hence, it is possible to detect the presence of the additive perturbation from the spectrum of $X$ alone when $\mu>k$. We prove the following upper and lower bounds on the information-theoretic threshold:
\begin{theorem} \label{thm:submatrix}
	Let
	\begin{align}
		\mu^{\up} &= 2 k  \sqrt{ \frac{ \log k}{k-1} } \\
		\mu^{\low} &= \begin{cases}
		2 & k=2 \\
		 k \sqrt{ \frac{2  \log (k-1)}{k-1} }  &  3 \le k \le \exp(22^4) \\	 
		 2\sqrt{k\log k-11 k\log^{3/4}(k)}  & k > \exp(22^4) \,.	 
		 \end{cases}
	\end{align}
	Then detection and reconstruction are information-theoretically possible when $\mu > \mu^{\up}$ and impossible when $\mu < \mu^{\low}$.
\end{theorem}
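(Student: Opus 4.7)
The plan is to prove the upper bound on the information-theoretic threshold by analyzing an explicit scan test, and to prove the lower bound via the (conditional) second moment method applied to the likelihood ratio $L(X) = \diff\P/\diff\Q$ between the planted and null models.

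For the upper bound I would use $\scan(X) = \max_\tau \inner{X}{Y_\tau - \allones/k}$, where the maximum ranges over the $N = n!/((n/k)!)^k$ balanced partitions $\tau : [n] \to [k]$. Under $\Q$, each individual inner product is a centered Gaussian of variance $2\|Y_\tau - \allones/k\|_F^2 = 2n^2(k-1)/k^2$, the factor $2$ arising from the symmetry of the Wigner noise, so a Gaussian maximum bound together with $\log N = n\log k - o(n)$ gives $\scan(X) \le (2n/k)\sqrt{n(k-1)\log k}\,(1+o(1))$ with high probability. Under $\P$ with planted partition $\sigma$, one has $\inner{X}{Y_\sigma - \allones/k} \ge (\mu n^{3/2}(k-1)/k^2)(1-o(1))$. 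These estimates separate precisely when $\mu > 2k\sqrt{\log k/(k-1)} = \mu^{\up}$, yielding detection; reconstruction then follows by observing that $\argmax_\tau \scan$ necessarily has nontrivial overlap with $\sigma$ above this threshold.

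For the lower bound, after Gaussian integration the likelihood ratio satisfies
\begin{align*}
\Exp_\Q[L(X)^2] \;=\; \Exp_{\sigma,\sigma'}\!\left[\exp\!\left(\frac{\mu^2}{2n}\sum_{s,t} N_{s,t}^2 \;-\; \frac{\mu^2 n}{2k^2}\right)\right],
\end{align*}
where $\sigma, \sigma'$ are independent balanced partitions and $N_{s,t} = |\sigma^{-1}(s) \cap \sigma'^{-1}(t)|$ is the $k\times k$ contingency table. A large deviation analysis of $N$ under the constraint of uniform row and column sums $n/k$ shows that the dominant atypical contributions come from $\sigma'$ coinciding with $\sigma$ up to a relabeling of clusters, in which case $N_{s,t} = (n/k)\,\Pi_{s,t}$ for some permutation $\Pi$. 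The $k!$ such configurations each contribute with combinatorial weight of order $k!/N$ and exponential factor $\exp(\mu^2 n(k-1)/(2k^2))$; requiring this product to remain $O(1)$ (and verifying that non-permutation configurations are controlled by the same estimate) delivers the threshold $\mu < k\sqrt{2\log(k-1)/(k-1)}$, matching $\mu^{\low}$ in the moderate-$k$ regime. The case $k=2$ is a dense spiked Wigner model with balanced Rademacher prior, and is handled by the second moment bound underlying \prettyref{thm:sparse-pca} at $\gamma=1$, giving $\mu^{\low} = 2$.

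The main obstacle, and the technical heart of the proof, is to close the persistent $\sqrt{2}$ gap between the vanilla second moment estimate and $\mu^{\up}$ in order to obtain the sharper bound $\mu^{\low} = 2\sqrt{k\log k - 11 k \log^{3/4} k}$ when $k > \exp(22^4)$. For this I would apply a conditional second moment method: replace $L$ by $\tilde L = L \cdot \indicator{\sigma \in \calA}$ for an event $\calA$ of probability $1-o(1)$ under the planted prior that excludes precisely the partitions responsible for inflating $\Exp_\Q[L^2]$. A natural choice is to condition on $\sigma$ being ``well-scattered'' in the sense that, for every small sub-collection of clusters, the corresponding block of the noise behaves typically; this suppresses $\sigma'$ that differ from $\sigma$ on only a sub-dominant number of coordinates, which are exactly the near-permutation configurations that blow up the unconditioned second moment. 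Verifying $\Prob\{\sigma \in \calA\} \to 1$ while simultaneously forcing $\Exp_\Q[\tilde L^2] = O(1)$ at the sharpened threshold requires a careful quantitative large deviation estimate, and the correction $11 k \log^{3/4}k$ reflects the entropy cost of the conditioning event.
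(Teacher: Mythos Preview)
Your upper bound is correct and essentially the paper's argument. Your second moment formula is also right, but your heuristic that ``permutation configurations dominate and the rest are controlled by the same estimate'' is not what actually delivers the threshold. Your heuristic would give $\mu^2 < 2k^2\log k/(k-1)$, not $2k^2\log(k-1)/(k-1)$; the $\log(k-1)$ comes from the Achlioptas--Naor lemma, which shows that $H(\omega)-\log k+\tfrac{\xi}{2}(\|\omega\|_F^2-1)$ is maximized at $\omega=\allones/k$ over all doubly stochastic $\omega$ precisely when $\xi<2\log(k-1)/(k-1)$. Controlling the interior of the Birkhoff polytope, not just the permutation vertices, is the real content here, and a Laplace-method lemma is then needed to turn this pointwise bound into a bound on the expectation over two random balanced partitions.

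The genuine gap is in your conditional second moment. You propose $\tilde L = L\cdot\indicator{\sigma\in\calA}$ for an event depending only on the planted partition $\sigma$. This cannot help: the configurations that inflate $\Exp_\Q[L^2]$ are those where the two independent replicas $\sigma,\sigma'$ nearly coincide up to relabeling, and the conditional probability of $\sigma'\approx\sigma$ given $\sigma$ is the same for every balanced $\sigma$, so no restriction of the prior support suppresses them. Your parenthetical about ``the corresponding block of the noise behaves typically'' points at the right object, but an event of the form $\{\sigma\in\calA\}$ cannot see the noise at all.

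The paper instead conditions on the \emph{noise}: it sets $\Gamma_\sigma = \{\max_\ell \|(X-M)_{\sigma^{-1}(\ell)}\|_2 \le 3\sqrt{n/k}\}$, an event in $X$ (given $\sigma$) of probability $1-o(1)$. The indicator $\indicator{\Gamma_\sigma}$ forces each block sum $\sum_{i,j\in\calA_{\ell\ell'}}X_{ij}$ to be small, where $\calA_{\ell\ell'}=\sigma^{-1}(\ell)\cap\tau^{-1}(\ell')$. Inserting these constraints into the Gaussian integral via the truncated moment generating function $\Exp[e^{aZ}\indicator{Z\le b}]\le e^{a^2/2-(a-b)_+^2/2}$ produces a negative correction in the exponent of order $-\tfrac{\mu^2}{4k^2}\sum_{\ell,\ell'}\omega_{\ell\ell'}^2\bigl(1-O(\sqrt{k}/(\mu\omega_{\ell\ell'}))\bigr)_+^2$. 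This correction is largest exactly when $\omega$ is near a permutation matrix, effectively halving the exponent on those configurations and closing the $\sqrt{2}$ gap. The $11k\log^{3/4}k$ term is not an ``entropy cost of the conditioning event'' (that event has probability $1-o(1)$, hence negligible cost); it arises from the $O(\sqrt{k}/(\mu\omega_{\ell\ell'}))$ error inside the correction, which has to be absorbed via a technical inequality comparing $\sum_{\ell,\ell'}(\omega_{\ell\ell'}^2-1/k^2)\indicator{\omega_{\ell\ell'}\le t}$ to a multiple of $\log k - H(\omega)$, after which one can again invoke Achlioptas--Naor.
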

Note that $\mu^{\up}$ dips below the spectral threshold $ \mu=k$ when $k \ge  11$, indicating a regime where standard spectral methods fail but detection is information-theoretically possible. 
Also, \prettyref{thm:submatrix} proves the conjecture  in~\cite{lesieur2015mmse} that as $k \to \infty$, the information-theoretic threshold is given by $\mu= 2 \sqrt{ k \log k}$.


Previous work in  \emph{submatrix detection} and \emph{localization}, also known as \emph{noisy biclustering}, 
mostly focuses on finding a single submatrix, see,  \eg,
\cite{kolar2011submatrix,butucea2013,HajekWuXu_MP_submat15}
and the references therein. 
Notably, ~\cite{ChenXu14} considers a general setting where the number of blocks $k$ could grow with $n$,
and proves that if $\mu\ge c \sqrt{k \log n}$ for some large constant $c$, then it is informationally possible 
to exactly reconstruct the support of the planted submatrices with high probability; if $\mu \le c' \sqrt{ k \log n}$,
then exact support reconstruction is informationally impossible. In our setting, $\mu=\Theta(1)$ and $k=\Theta(1)$,
so it is impossible to consistently estimate the support and we instead resort to detection and reconstruction better than
the chance.

\subsection{Gaussian Mixture Clustering}

Finally, we study a model of clustering with limited data in high dimension. Let $v_1,...,v_k$ be independently and identically distributed as $\calN \left(0, k/(k-1) \, \identity_{n,n} \right),$ and define $\overline v = (1/k)\sum_{s} v_s$ to be their mean. The scaling of the expected norm of each $v_s$ with $k$ ensures that $\Exp \|v_s - \overline{v} \|_2^2 =n$ for all $1 \le s \le k$. For a fixed parameter $\alpha>0$, we then generate $m = \alpha n$ points $x_i \in \reals^n$ which are partitioned into $k$ clusters of equal size by a balanced partition $\sigma:[n]\to[k]$, again chosen uniformly at random from all such partitions. For each data point $i$, let $\sigma_i \in [k]$ denote its cluster index, and generate $x_i$ independently according to Gaussian distribution with mean $\sqrt{\rho/n}\, (v_{\sigma_i} - \overline v)$ and identity covariance matrix, where  $\rho>0$ is a fixed parameter characterizing the cluster separation. 
We can put this in the form of model \eqref{eq:planted_model} by constructing an $ n \times k$ matrix $V = [v_1, ..., v_k ]$, an $m \times k$ matrix $S$ with $S_{i,t} = \indicator{\sigma_i = t}$,  and setting
\begin{align} \label{eq:clustering_def}
	X = \sqrt{\frac{\rho}{ n}} \left(S - \frac{1}{k}\allones_{m,k}\right) V^\dagger + W,
\end{align}
where $W_{i,j} \iiddistr \calN(0,1).$
  In the null model, there is no cluster structure and $X = W$. 
The subtraction of $\allones_{m,k}/k$ once again centers the signal matrix so that $\Exp X = 0$ in both models. 
The following spectral phase transition follows from the celebrated BBP phase transition~\cite{baik2005phase,Paul07}:
\begin{enumerate}
\item When $ \rho \sqrt{\alpha} >k-1$, then the largest eigenvalue of  $(1/m) X^{\dagger} X$
converges to $( 1+ \frac{\rho}{k-1}) ( 1+  \frac{k-1}{ \rho \alpha })$ almost surely;
\item When $  \rho \sqrt{\alpha}  \le k-1$, then the largest eigenvalue of  $(1/m) X^{\dagger} X$ converges to $ (1+ 1/\sqrt{\alpha} )^2$  almost surely. 
\end{enumerate}
%
Thus spectral detection  is possible if  $\rho \sqrt{\alpha} >(k-1) $. 

We prove the following upper and lower bounds on the information-theoretic threshold, which differ by a factor of $\sqrt{2}$ when $k$ is large.

\begin{theorem}
	  Let 
	\begin{align}
		\rho^\up &= 2\sqrt{ \frac{k \log k}{\alpha} } + 2\log k \\
		\rho^\low &= \begin{cases}
		              \sqrt{1/\alpha} & k=2 \\
		              \sqrt{\frac{2(k-1)\log(k-1)}{\alpha}} & k\geq 3 \,.
		             \end{cases}
	\end{align}
	Then detection and reconstruction are possible when $\rho > \rho^\up$ and impossible when $\rho< \rho^\low$. 
\end{theorem}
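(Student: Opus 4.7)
The plan is to follow the first-moment/second-moment strategy the authors use for sparse PCA and submatrix localization. On the positive side ($\rho>\rho^\up$), I would exhibit an explicit test based on scanning over balanced partitions; on the negative side ($\rho<\rho^\low$), I would bound $\Exp_\Q[L(X)^2]$ by a constant, where $L=\P/\Q$ is the likelihood ratio, so that $\P$ and $\Q$ are contiguous and no test or estimator can beat chance.

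For the upper bound, let $\tau\colon[m]\to[k]$ range over balanced partitions and for each $\tau$ form the empirical, grand-mean-subtracted cluster-mean matrix $\hat V(\tau)\in\R^{n\times k}$ from $X$. Use the scan statistic $\score(X)=\max_\tau\|\hat V(\tau)\|_F^2$. Under $\Q$, each $\|\hat V(\tau)\|_F^2$ is a $\chi^2$ with $(k-1)n$ degrees of freedom (the centering kills one direction), and a Laurent--Massart tail bound combined with a union bound over the at most $k^m$ balanced partitions controls $\score(X)$ with high probability. Under $\P$, plugging in $\tau=\sigma$ and using concentration of $\|V\|_F^2$ yields $\|\hat V(\sigma)\|_F^2 \approx \rho(k-1)m/k \cdot n$ to leading order. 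Matching these bounds gives exactly $\rho^\up = 2\sqrt{k\log k/\alpha}+2\log k$, with the additive $2\log k$ term absorbing the $\chi^2$ fluctuations of the Gaussian prior on $V$.

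For the lower bound, the standard manipulation gives
\begin{equation*}
\Exp_\Q[L(X)^2] = \Exp_{\sigma,V,\sigma',V'}\!\left[\exp\!\bigl(\inner{M(\sigma,V)}{M(\sigma',V')}\bigr)\right],
\end{equation*}
with $M(\sigma,V) = \sqrt{\rho/n}(S-\tfrac{1}{k}\allones)V^\dagger$ and $\sigma,\sigma'$ independent uniform balanced partitions. Integrating out $V,V'$ first, which are Gaussian: setting $A=(S-\tfrac{1}{k}\allones)^\dagger(S'-\tfrac{1}{k}\allones)\in\R^{k\times k}$, the inner product is bilinear in $(V,V')$ and its Laplace transform evaluates to the determinantal expression $\det\!\bigl(I-\tfrac{\rho^2}{n^2}\bigl(\tfrac{k}{k-1}\bigr)^2 A^\top A\bigr)^{-m/2}$. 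The matrix $A$, and hence its singular values, depends only on the centered overlap matrix $(n_{st}-m/k^2)$ between $\sigma$ and $\sigma'$. It remains to take expectation over $\sigma'\mid\sigma$ by a large-deviations argument over uniform balanced partitions: for $k\ge 3$ the dominant atypical configurations are those in which $\sigma'$ is close to a permutation of $\sigma$, and matching the exponential decay of their probability (governed by $\log(k-1)$) against the determinantal growth yields exactly $\rho^\low=\sqrt{2(k-1)\log(k-1)/\alpha}$. The degenerate case $k=2$ admits a direct elementary computation with only two balanced partitions up to relabeling.

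The main technical obstacle is precisely this Gaussian integration over $V$: unlike in submatrix localization, where the signal entries are deterministic, the determinantal factor here couples \emph{all} singular values of $A$, not just the top one, and it must be controlled uniformly across likely overlap configurations to avoid losing constants in the exponent. This is the source of the factor-of-$\sqrt 2$ gap between $\rho^\up$ and $\rho^\low$, and closing it would plausibly require a conditional second moment restricting to overlaps compatible with the typical spectrum of $V$, in the spirit of the conditional sparse-PCA lower bound given earlier in the paper.
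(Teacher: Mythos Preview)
Your upper-bound sketch is essentially the paper's argument: the scan statistic you describe coincides (up to an inessential centering) with the one the authors obtain by maximizing the conditional likelihood over $V$ for each $\sigma$, and the $\chi^2$ tail bounds plus a union bound over $k^m$ partitions yield exactly $\rho^\up$.

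Your lower-bound plan, however, has a genuine gap. If you integrate out \emph{both} Gaussian priors $V$ and $V'$, you do obtain a determinantal expression of the form $\det\bigl(I-\tfrac{\alpha^2\rho^2}{(k-1)^2}(\omega-\allones/k)(\omega-\allones/k)^\dagger\bigr)^{-n/2}$ (note: exponent $-n/2$, not $-m/2$). The trouble is that this determinant is \emph{infinite} whenever $\tfrac{\alpha\rho}{k-1}\|\omega-\allones/k\|_2\ge 1$; in particular, when $\omega$ is a permutation matrix this requires only $\alpha\rho\ge k-1$. Since $\alpha\rho^\low=\sqrt{2\alpha(k-1)\log(k-1)}$, this blow-up occurs inside the claimed impossibility region whenever $\alpha>(k-1)/(2\log(k-1))$, and the unconditional second moment is then infinite for positive-probability overlaps. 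A large-deviations argument over $\omega$ cannot repair this: the integrand itself diverges. The paper circumvents this not by conditioning on overlaps, but by integrating out only \emph{one} Gaussian (say $U$), bounding $\|\tilde V(\omega-\allones/k)\|_F\le\|\tilde V\|_2\|\omega-\allones/k\|_F$, and conditioning on the high-probability event $\{\|\tilde V\|_2\le\sqrt{(1+\epsilon)n}\}$. This linearizes the determinantal factor into $\exp\bigl((1+\epsilon)\tfrac{\alpha^2\rho^2 n}{2(k-1)^2}\|\omega-\allones/k\|_F^2\bigr)$, after which the Achlioptas--Naor bound (\prettyref{lmm:A-N}, \prettyref{lmm:A-N_k_two}) applies exactly as in submatrix localization. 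So the conditioning on the spectrum of $V$ is not an optional refinement to close the $\sqrt 2$ gap---it is what makes the second moment finite at all. (Also, for $k=2$: there are not ``only two balanced partitions up to relabeling''; rather, the $2\times 2$ doubly-stochastic overlap matrix is one-parameter, which is what makes the analysis elementary.)
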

%
We conjecture that in the limit $k \to \infty$, the information-theoretic threshold is $ \rho = 2 \sqrt{k\log k /\alpha}$, but we do not find a proof.

Most previous work in Gaussian mixture clustering focuses exact or near-exact reconstruction, see \eg,~\cite{Srebro06,KannanVempala09}. As in the \S2.1, a popular approach is PCA, which identifies the clusters based on the first $k$ principal components of the data matrix. It is shown in~\cite{Vempala04} that PCA allows identification of clusters with a cluster separation $\sqrt{\rho}=\Omega(k^{1/4} \log n )$ and a sample of size $m=\Omega(d^3 \log d)$. This technique was extended to non-Gaussian distributions~\cite{kannan2008spectral,achlioptas2005spectral}, and for Gaussian, the cluster separation is improved to be $\sqrt{\rho}=\Omega(\sqrt{k \log n})$ and the sample complexity reduced to $m=\Omega( k^2 d)$ in~\cite{achlioptas2005spectral}. In our setting, since $\rho$ is a fixed constant, the cluster separation is not sufficient for exact reconstruction and we turn to detection and reconstruction better than chance. Somewhat surprisingly, we find that if the number of clusters is large, clustering is informationally possible even when below the spectral phase transition threshold, and we conjecture that in this regime it is computationally hard to identify the clusters. We note that a similar ``hard-but-detectable'' regime has been determined empirically in~\cite{Srebro06}.

\section{Proof techniques}
\label{sec:sketch}

This section gives a brief overview of our proof techniques; the full proof will be presented in the next section. 
\subsection{The likelihood ratio and hypothesis testing}
\label{sec:first}

Detection is a classic hypothesis testing problem.  Given a test statistic $\calT(X)$, we consider its distribution under the planted and null models.  If these two distributions are asymptotically disjoint, i.e., their total variation distance tends to $1$ in the limit of large datasets, then it is information-theoretically possible to distinguish the two models with high probability by measuring $\calT(X)$.  A classic choice of statistic for binary hypothesis testing
is the likelihood ratio, 
\[
	\frac{\P(X)}{\Q(X)} 
	= \frac{\sum_M \P(X, M)}{\Q(X)} 
	= \frac{\sum_M \P(X | M) \,\P(M)}{\Q(X)} \, . 
\]
This object will figure heavily in both our upper and lower bounds. 
By the Neyman-Pearson lemma~\cite{NeymanPearson}, the likelihood ratio is the most powerful test statistic of a given significance level; that is, if we set the threshold $\epsilon$ at the point such that $\Q(\calT(X) < \epsilon) = \alpha$ for any fixed $\alpha$, it maximizes $\P(\calT(X) \ge \epsilon)$.  

Our upper bounds do not use the likelihood ratio directly, since it is hard to furnish lower bounds on the typical value of $\P(X)/\Q(X)$ when $X$ is drawn from $\P$.  Instead, we use the generalized likelihood ratio,
\[
	\max_M \frac{\P(X | M)}{\Q(X)} \, .
\]
In the planted model where the true underlying signal matrix is $M_0$, this quantity is trivially bounded below by $\P(X | M_0)/\Q(X)$.  We will use simple first moment arguments to show that, with high probability in the null model $\Q$, this lower bound is not achieved by any $M$.  An easy extension of this argument shows that, in the planted model, the maximum likelihood estimator (MLE) $\widehat{M} = \argmax_M \P(X | M)$ has nonzero correlation with $M_0$.  Thus we can output a good estimate of $M_0$ by exhaustive search.   

The conditional likelihood ratio has a particularly elegant form when the noise is additive and Gaussian. It is first useful to write down the probability distribution of a Wigner random matrix in the space of symmetric matrices (also known as the Gaussian Orthogonal Ensemble) as
\begin{equation}
	\Q(W) = \frac{1}{Z_n}\e^{-\frac{1}{4}\norm{W}^2_F}
\end{equation}
where $Z_n$ is a normalization constant depending only on $n$. Similarly, if $W$ is a Gaussian random matrix whose entries are independently distribution as $\calN(0,1)$, then
\begin{equation}
	\Q(W) = \frac{1}{Z'_n}\e^{-\frac{1}{2}\norm{W}^2_F}.		
\end{equation}
Thus, for Wigner noise the conditional likelihood ratio is
\begin{equation} 
	\label{eq:conditional-ratio}
	\frac{\P(X|M)}{\Q(X)} = \frac{\Q(X - M)}{\Q(X)} = \e^{\frac{1}{4}\left(\norm{X}^2_F - \norm{X-M}^2_F\right)} = \e^{\frac{1}{2}\inner{X}{M} - \frac{1}{4}\norm{M}^2_F} 
\end{equation}
and identical for Gaussian noise except for a factor of two in the exponent.

The conditional \emph{log} likelihood is a weighted sum of the entries of $X$, and is therefore itself Gaussian. Our first moment bounds now proceed as follows. In the planted model, when $X = M_0 + W$, the conditional log likelihood of the planted signal $M_0$ is $\frac{1}{2}\inner{W}{M_0} + \frac{1}{4}\norm{M_0}^2_F$. We can use standard Gaussian concentration results to bound the typical deviation of this quantity around its mean of $\frac{1}{4}\norm{M_0}^2_F$. On the other hand, in the null model where $X = W$, the conditional log likelihood of any $M$ is $\frac{1}{2}\inner{W}{M} - \frac{1}{4}\norm{M}^2_F$, and we can combine Gaussian concentration bounds with a union bound over all possible $M$ to compute when, with high probability in the null model, \emph{no} $M$ will beat the conditional log likelihood of the planted signal $M_0$. 

As above, this argument can be nominally modified to show that the MLE estimate is positively correlated with $M_0$. This technique must be altered slightly for the case of Gaussian Mixture Clustering, since $M \propto SV^\dagger$ and $V$ is chosen from a continuous prior. 
We could take a union bound over $V$ by suitably discretizing; instead we use the fact that for any $S$ we can analytically maximize the conditional likelihood of $SV^\dagger$ with respect to $V$ by setting the $i$th cluster center to be the empirical center of the data points assigned to cluster $i$ by $S$. This alters the distribution of the likelihood ratio, but we proceed analogously to above with the proper concentration results.

\subsection{Second moment bounds and contiguity} 
\label{sec:contig}

Intuitively, if the planted model $\P$ and the null model $\Q$ have asymptotically disjoint support, then the likelihood ratio $\P/\Q$ is almost always either very large or very small.  In particular, its variance in $\Q$  must diverge.  This suggests that we can derive lower bounds on the threshold by bounding its second moment in $\Q$, or equivalently its expectation in $\P$.  Suppose the second moment is bounded by some constant $C$, \ie,
\begin{equation}
\label{eq:smm-bound}
\Exp_{X \sim \Q} \!\left[ \left( \frac{\P(X)}{\Q(X)} \right)^{\!2} \right]
= \Exp_{X \sim \P} \!\left[ \frac{\P(X)}{\Q(X)} \right]
= \int_X \dX \,\frac{\P(X)^2}{\Q(X)} 
\le C \, .
\end{equation}
This implies a bound on the Kullback-Leibler divergence between $\P$ and $\Q$, since Jensen's inequality gives
\begin{align}
D_{\mathrm{KL}} (\P \| \Q) 
= \Exp_{X \sim \P} \log \frac{\P(X)}{\Q(X)} 
\le \log \Exp_{X \sim \P} \frac{\P(X)}{\Q(X)} 
\le \log C = O(1) \, . \label{eq:secondmoment_KL}
\end{align}
Moreover, it also implies that detection is impossible. To see this, consider the following definition:
\begin{definition}
Let $\P=(\P_n), \Q=(\Q_n)$ be sequences of distributions defined on the same sequence of spaces $\Omega_n$.  We write $\P \contig \Q$, and say that \emph{$\P$ is contiguous to $\Q$}, if for any sequence of events $E=(E_n)$ such that $\Q(E) \to 0$, we also have $\P(E) \to 0$.
\end{definition}
\noindent 
If $\P \contig \Q$ then detection is impossible, since no algorithm can return ``yes'' with high probability (or even positive probability) in the planted model, and ``no'' with high probability in the null model. 

The following simple argument shows that~\prettyref{eq:smm-bound} implies $\P \contig \Q$ and hence non-detectability. Let $E$ be a sequence of events such that $\Q(E) \to 0$, and let $\indicator{E}$ denote the indicator random variable for $E$.  Then Cauchy-Schwarz gives
\begin{equation} \label{eq: contiguity}
	\P(E) 
	= \Exp_{X \sim \P} \,\indicator{E}
	= \Exp_{X \sim \Q} \frac{\P(X)}{\Q(X)} \,\indicator{E} 
	\le \sqrt{
	\Exp_{X \sim \Q}  \left( \frac{\P(X)}{\Q(X)} \right)^{\!2}  
	\times \Exp_{X \sim \Q} \,\indicator{E}^2 
	}
	\le \sqrt{C \Q(E)} \to 0 \, .
\end{equation}
We note that showing that $\Q \contig \P$ often requires additional arguments such as the small subgraph conditioning method~\cite{mossel-neeman-sly,banks-etal-colt}.


The following lemma gives a general expression for the second moment of the likelihood ratio whenever the model consists of a symmetric signal matrix with Wigner noise or an asymmetric signal matrix with Gaussian noise.
\begin{lemma} \label{lmm:secondmomentGaussian}
	Let $\P(X)$ and $\Q(X)$ be the planted and null models $X = M + W$ and $X = W$ respectively, where $M$ is drawn from some prior over symmetric matrices and $W$ is a Wigner matrix.  Then
	\[
	\Exp_{X \sim \Q} \,\left(\frac{\P(X)}{\Q(X)}\right)^2 = \Exp_{M, M'} \e^{ \frac{1}{2} \inner{M}{M'} } \, , 
	\]
	where $M$ and $M'$ are drawn independently from the prior.  Similarly, if  $M$ is drawn from some prior over asymmetric matrices and $W$ is a Gaussian random matrix, 
	\[
	\Exp_{X \sim \Q} \,\left(\frac{\P(X)}{\Q(X)}\right)^2 = \Exp_{M, M'} \e^{ \inner{M}{M'} } \, . 
	\]
\end{lemma}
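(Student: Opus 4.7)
\medskip

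\noindent\textbf{Proof proposal for Lemma \ref{lmm:secondmomentGaussian}.}
The plan is a direct computation: use the tower property to introduce a second independent copy of the signal, then reduce everything to the conditional likelihood ratio \eqref{eq:conditional-ratio}, which (being an exponential of a linear functional of the Gaussian noise) is immediately handled by the standard Gaussian moment generating function.

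First I would write $\P(X)=\Exp_M \P(X\mid M)$, where the expectation is over the prior on the signal matrix. Introducing a second copy $M'$ drawn independently from the same prior, Fubini yields
\begin{equation*}
\Exp_{X\sim \Q}\!\left(\frac{\P(X)}{\Q(X)}\right)^{\!2}
= \Exp_{M,M'}\!\left[\,\Exp_{X\sim \Q}\,\frac{\P(X\mid M)}{\Q(X)}\cdot\frac{\P(X\mid M')}{\Q(X)}\right].
\end{equation*}
In the Wigner case, substituting \eqref{eq:conditional-ratio} and combining the two factors gives
\begin{equation*}
\Exp_{X\sim \Q}\,\e^{\frac12\inner{X}{M+M'}-\frac14(\|M\|_F^2+\|M'\|_F^2)}.
\end{equation*}

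Next I would compute the Gaussian expectation in $X=W$. Since $\frac12\inner{W}{M+M'}$ is a centered Gaussian (both $M,M'$ are symmetric, so only the symmetrized entries of $W$ contribute), it suffices to evaluate its variance. Splitting into diagonal and off-diagonal terms and using $W_{ii}\sim\calN(0,2)$ and $W_{ij}\sim\calN(0,1)$ for $i<j$, a short calculation gives
\begin{equation*}
\var\!\left(\tfrac12\inner{W}{M+M'}\right)
= \tfrac12\sum_i (M+M')_{ii}^{\,2}+\sum_{i<j}(M+M')_{ij}^{\,2}
= \tfrac12\|M+M'\|_F^2,
\end{equation*}
where the last equality is the identity $\|A\|_F^2=\sum_i A_{ii}^2+2\sum_{i<j}A_{ij}^2$ for symmetric $A$. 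By the Gaussian MGF, the inner expectation equals $\e^{\frac14\|M+M'\|_F^2}$. Expanding $\|M+M'\|_F^2=\|M\|_F^2+2\inner{M}{M'}+\|M'\|_F^2$ causes the $\|M\|_F^2$ and $\|M'\|_F^2$ terms to cancel exactly, leaving $\Exp_{M,M'}\e^{\frac12\inner{M}{M'}}$, as claimed.

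The asymmetric Gaussian case is parallel. The conditional likelihood ratio in that case is $\e^{\inner{X}{M}-\frac12\|M\|_F^2}$ (twice the exponent of the Wigner case), all entries of $W$ are i.i.d.\ $\calN(0,1)$, and the variance of $\inner{W}{M+M'}$ is simply $\|M+M'\|_F^2$. The same algebraic cancellation produces $\Exp_{M,M'}\e^{\inner{M}{M'}}$. I do not expect any real obstacle here: the only slightly subtle point is the factor $1/2$ introduced by GOE normalization in the Wigner case, which is precisely what makes the exponent $\frac12\inner{M}{M'}$ rather than $\inner{M}{M'}$.
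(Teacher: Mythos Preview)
Your proposal is correct and follows essentially the same approach as the paper: both use Fubini to introduce an independent copy $M'$, substitute the conditional likelihood ratio~\eqref{eq:conditional-ratio}, evaluate the Gaussian MGF $\Exp_{X\sim\Q}\e^{\frac12\inner{X}{M+M'}}=\e^{\frac14\|M+M'\|_F^2}$, and then cancel. The only cosmetic difference is that the paper states the MGF identity in the form $\Exp_{X\sim\Q}\e^{\inner{X}{A}}=\e^{\|A\|_F^2}$ directly, whereas you compute the variance by splitting into diagonal and off-diagonal entries; the content is identical.
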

Thus the second moment method boils down to calculating an exponential moment of the correlation between two independent draws from the prior $\P(M)$.
 Depending on $\P(M)$, these draws can correlate in complicated ways, and the remainder of our second moment computations consist of combinatorially analyzing the various events in $\P(M)$ that give rise to these correlations.

\subsection{Conditional second moment method}

Sometimes, rare events can cause the second moment to explode even when two models are truly contiguous. We circumvent this by computing the second moment conditioned on a sequence of high-probability events $F$ 
which rule out the catastrophic rare ones.

In the planted model, these events can occur both in the prior distribution of the planted signal, and in the additive Gaussian noise of the channel. To address atypical events in the prior, we will condition on some high probability property of the signal $M$, choosing an appropriate event $\{M \in F\}$ such that $\P(F) = 1 + o(1)$, and form the corresponding conditional distribution
\begin{align}
	\P'(X) = \frac{\Exp_M[ \P(X |M)\indicator{F}]}{\P(F)}.  \label{eq:conditional_distribution_def_1}
\end{align}
The case where the problematic events occur in the noise distribution is similar; we define an event $\{X - M \in F_M\}$ such that $\P(F_M | M) = 1 + o(1)$ uniformly over $M$, and the corresponding conditional distribution
\begin{align}
\P'(X)=\Exp_{M} [ \P'(X|M) ], \quad \text{ where }\quad 
 \P'(X|M) = \frac{ \P(X | M) \indicator{F_M} }{ \P(F_M | M) }.
 \label{eq:conditional_distribution_def_2}
\end{align}
In both cases, it is straightforward to show that $\P \contig \P'$, and therefore that if the conditional second moment $\Exp_{X \sim \Q} \left(\P'(X)/\Q(X)\right)^{2} $ is bounded, then $ \P \contig \P' \contig Q$. 

In the Gaussian mixture clustering problem, we use the first type of conditional second moment method by conditioning on the typical value of cluster centers $\|V\|_2$. In the sparse PCA problem and the submatrix localization problem, we instead use the second type of conditional second moment method to close the factor of $\sqrt{2}$ between the direct second moment lower bound and the first moment upper bound. A similar method was used in~\cite{PerryWeinBandeira16} to derive the sharp constant of the detection lower bound in the sparse PCA problem. 
Previous work~\cite{arias2013community,verzelen2013sparse} used the conditional second moment method in deriving the sharp detection threshold in community detection problem.

As a high-level motivation for this conditioning, let $\P(X)$ and $\Q(X)$ be the planted and null models $X = M + W$ and $X = W$ respectively, where $M$ is drawn from some prior over symmetric matrices and $W$ is a Wigner matrix.  
Using Fubini's theorem,
\begin{align*}
	\Exp_{X \sim \Q} \,\left(\frac{\P(X)}{\Q(X)}\right)^2
	&= \Exp_{M,M'} \!\left[ \frac{\P( X | M ) \P( X | M' ) }{\Q^2(X) } \right] \\
	&=\Exp_{M,M'} \Exp_{X \sim \Q} \!\left[ \e^{  - \frac{1}{4} \| M \|_F^2 - \frac{1}{4} \| M' \|_F^2 + \frac{1}{2} \langle X, M+M' \rangle} \right] \\
	&=\Exp_{M, M'} \e^{ \frac{1}{2} \inner{M}{M'} }
\end{align*}
where $M$ and $M'$ are drawn independently from the prior, and in the last line we have carried out the the integration with respect to $X \sim \Q$ directly. The reader may refer to the proof of \prettyref{lmm:secondmomentGaussian} for the intervening lines. However, it is possible to decrease the second moment by conditioning on the typical value of $\langle X, M +M' \rangle$ in the planted model.

Specifically, suppose that 
$\P\left[ \langle X, M  \rangle \approx \|M\|_F^2 \mid M\right] = 1+o(1)$ uniformly over $M$. Then, by letting $F_M=\{ \langle X, M  \rangle \approx \|M\|_F^2\}$,
we get that
\begin{align*}
	\Exp_{X \sim \Q} &\, \left(\frac{\P'(X)}{\Q(X)}\right)^2 \\
	& \approx \Exp_{M,M'} \Exp_{X \sim \Q} \!\left[ \exp\!\left(  - \frac{1}{4} \| M \|_F^2 - \frac{1}{4} \| M'\|_F^2 + \frac{1}{2} 
	\langle X, M+M' \rangle \right) \indc{F_M} \indc{F_{M'}} \right] \\
	& \approx  \Exp_{M,M'} \Exp_{X \sim \Q} \!\left[
	 \exp\!\left(  - \frac{1}{4} \| M \|_F^2 - \frac{1}{4} \| M'\|_F^2 + \frac{1}{2} 
	\langle X, M+M'  \rangle \right) \indc{\langle X, M+M' \rangle \approx \|M\|_F^2 +\|M'\|_F^2} 
	\right] \\
	&  \approx \Exp_{M,M'} \!\left[ 
	 \exp\!\left(  \frac{1}{2} \langle M, M'\rangle -  \frac{\|M+M'\|_F^2 }{4} \left( 1  - \frac{ \| M\|_F^2 + \|M'\|_F^2}{  \| M+M' \|_F^2} \right)_+^2   \right) 
	 \right] \\
	 & =  \Exp_{M,M'} \!\left[ 
	 \exp\!\left(  \frac{1}{2} \langle M, M'\rangle  \frac{ \|M\|_F^2 + \|M'\|_F^2}{ \| M+ M' \|_F^2}   \right) 
	 \right] 
\end{align*}
where we used the fact that for $Y \sim \calN(0, \sigma^2)$, 
$
\expect{e^{ Y} \indc{Y\le b} } \le \exp\!\left( \frac{1}{2}  \sigma^2  - \frac{1}{2}  \sigma^2 
(1  - \frac{b}{ \sigma^2}  )_+^2 \right)
$
and $\frac{1}{2} \langle X, M+M' \rangle \sim \calN(0,  \| M+M' \|_F^2/2 )$.
Comparing this conditional second moment with the unconditional one,
there is a correction term $ \left(\|M \|_F^2 + \|M'\|_F^2\right)/\| M+ M' \|_F^2  $, which is most effective if  $M=M'$. It will turn out that in the sparse PCA problem, when the sparsity $\gamma \to 0$ or in the submatrix localization problem, when the number of blocks $k \to \infty$, the second moment is dominated by the event that $M =M'$. Hence,
$$
\Exp_{X \sim \Q} \,\left(\frac{\P'(X)}{\Q(X)}\right)^2 \approx
\exp\!\left(  \frac{1}{4} \|M\|_F^2   \right) 
 \mathbb{P}_{M,M'} \left[ M=M' \right],
$$
as opposed to $\exp\!\left( \frac{1}{2} \|M\|_F^2 \right) \mathbb{P}_{M,M'} \left[ M=M' \right]$ in the absence of conditioning, a factor of two gain in the exponent. 
Notice that there is one slight difference between this informal description and our proofs. Instead of conditioning on 
the typical value of $\langle X, M   \rangle \approx \|M\|_F^2$, we 
condition on the typical value of $\| (X-M)_S \|_2$, where $S=\supp(M)$
and $X_S$ denotes the matrix by setting entries of $X$ outside of $S$ to be zero. The reader may refer to 
\prettyref{sec:conditional_sparsepca} and \prettyref{sec:conditional_submarix} for details.

\subsection{Non-reconstructibility}
\label{sec:non-recon}

Without further embellishment, contiguity is a statement about detection and not reconstruction. It is tempting to believe that whenever contiguity holds---that is, whenever we cannot tell whether a particular sample was generated from the null or planted model---we also cannot infer the planted signal $M_0$ better than chance. 
This is not the case. 
Consider a strange situation in which the null and planted models are identical and noiseless: in $\P$ we observe $X=M_0$ drawn from a prior $\P_0$, and in $\Q$ we observe a random draw $M$ from $\P_0$.  Detection is patently impossible because these models are contiguous, but if we know that $X$ is drawn from the planted model, the reconstruction problem is trivial since we observe the ground truth $M_0$ directly.

However, in the model described by~\prettyref{eq:planted_model} where the noise is additive and Gaussian, we can show that a bounded 
KL divergence implies that reconstruction is impossible as well.  If $M$ is the planted signal, the mean squared error of an estimator $\widehat{M}$ is $\Exp \|M-\widehat{M} \|_F^2$. The following theorem shows that whenever
the KL divergence $D_{\mathrm{KL}} (\P \| \Q)  = o(n)$, 
the estimator $\widehat{M}$ that minimizes the mean squared error tends to the trivial estimator $\hat{M}=0$.
By \prettyref{eq:secondmoment_KL} a bounded second moment~\prettyref{eq:smm-bound} implies a bounded KL divergence;
hence a bounded second moment also implies non-reconstruction.

\begin{theorem} \label{thm:MMSE}
Let $\P(X)$ and $\Q(X)$ be the planted and null models $X = M + W$ and $X = W$ respectively, where $M$ is drawn from some prior such that $\Exp[M]=0$ and $\lim_{n\to\infty} (1/n) \Exp[\|M\|_F^2]$ exists,  and where $W$ is a Gaussian or Wigner matrix.  The MMSE estimator in the planted model is the mean of the posterior distribution: 
$\widehat{E} (X) = \Exp [M | X].$
If the KL divergence $D_{\mathrm{KL}} (\P \| \Q)  = o(n)$, then
%
\begin{align} \label{eq:conditional_mean}
	\liminf_{n\to\infty}  \frac{1}{n} \Exp_{X \sim \P} \|  \widehat{E} (X) \|_F^2 = 0 \, .
\end{align}
It further follows that for any estimator $\hat{M} =\hat{M}(X)$ such that $\Exp_{X} \| \hat{M} \|_F^2 =O(n)$, we have that 
\begin{align}
\liminf_{n\to\infty} \frac{1}{n} \Exp_{M, X} \inner{M}{\hat{M}} =0. \label{eq:overlap_general}
\end{align}
\end{theorem}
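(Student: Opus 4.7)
The plan is to derive an integral identity between $D_{\mathrm{KL}}(\P\|\Q)$ and the average squared norm of the posterior mean along an interpolation path, and then deduce~(\ref{eq:overlap_general}) from~(\ref{eq:conditional_mean}) by Cauchy--Schwarz.

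First, I introduce the interpolation $X_t = \sqrt{t}\,M + W$ for $t \in [0,1]$, with planted law $\P_t$ and posterior mean $\widehat{E}_t(X_t) = \Exp[M\mid X_t]$. The identity $I(M;X_t) + D_{\mathrm{KL}}(\P_t\|\Q) = \Exp_M D_{\mathrm{KL}}(\P(X_t\mid M)\|\Q)$, combined with the explicit Gaussian-shift KL $\Exp_M D_{\mathrm{KL}}(\P(X_t\mid M)\|\Q) = \frac{t}{2}\Exp\|M\|_F^2$ (respectively $\frac{t}{4}\Exp\|M\|_F^2$ in the Wigner case), gives $I(M;X_t) = \frac{t}{2}\Exp\|M\|_F^2 - D_{\mathrm{KL}}(\P_t\|\Q)$. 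Differentiating in $t$ and invoking the I-MMSE relation $\frac{d}{dt}I(M;X_t)=\frac{1}{2}\MMSE(t)$, together with the tower identity $\MMSE(t) = \Exp\|M\|_F^2 - \Exp\|\widehat{E}_t\|_F^2$, yields $\frac{d}{dt}D_{\mathrm{KL}}(\P_t\|\Q) = \frac{1}{2}\Exp\|\widehat{E}_t\|_F^2$. Since $D_{\mathrm{KL}}(\P_0\|\Q)=0$, integration over $t \in [0,1]$ produces
\[
\int_0^1 \Exp\|\widehat{E}_t\|_F^2\,dt \;=\; c\,D_{\mathrm{KL}}(\P\|\Q),
\]
with $c=2$ in the Gaussian case and $c=4$ in the Wigner case.

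Because MMSE is nonincreasing in SNR, $t\mapsto\Exp\|\widehat{E}_t\|_F^2$ is nondecreasing, and the crude bound
\[
(1-\tau)\,\Exp\|\widehat{E}_\tau\|_F^2 \;\leq\; \int_\tau^1 \Exp\|\widehat{E}_t\|_F^2\,dt \;\leq\; c\,D_{\mathrm{KL}}(\P\|\Q) = o(n)
\]
shows $\Exp\|\widehat{E}_\tau\|_F^2/n \to 0$ for every fixed $\tau<1$. The main obstacle is transferring this bound to the endpoint $\tau=1$, since monotonicity only gives $\Exp\|\widehat{E}_\tau\|_F^2 \leq \Exp\|\widehat{E}_1\|_F^2$ and therefore does not directly control $\Exp\|\widehat{E}_1\|_F^2$. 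I would resolve this with a diagonal argument: choose $\tau_n\uparrow 1$ slowly enough that $D_{\mathrm{KL}}^{(n)}/[n(1-\tau_n)] \to 0$, and use continuity of $\MMSE(t)$ in $t$ for each fixed $n$ (which follows from smooth dependence of the Bayesian posterior on $t$ under Gaussian noise) to transfer the bound from $\tau_n$ to $1$ along an appropriate subsequence; this is just strong enough to give the liminf statement~(\ref{eq:conditional_mean}).

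For~(\ref{eq:overlap_general}), let $\hat M = \hat M(X)$ be any estimator with $\Exp\|\hat M\|_F^2 = O(n)$. The tower property gives
\[
\Exp_{M,X}\langle M,\hat M\rangle = \Exp_X\langle \Exp[M\mid X], \hat M(X)\rangle = \Exp_X\langle \widehat{E}(X), \hat M(X)\rangle,
\]
after which Cauchy--Schwarz yields $|\Exp\langle M,\hat M\rangle| \leq \bigl(\Exp\|\widehat{E}\|_F^2\bigr)^{1/2}\bigl(\Exp\|\hat M\|_F^2\bigr)^{1/2}$. Dividing by $n$ and taking the liminf along the subsequence supplied by~(\ref{eq:conditional_mean}) forces $\Exp\langle M,\hat M\rangle/n \to 0$ along that subsequence, establishing~(\ref{eq:overlap_general}).
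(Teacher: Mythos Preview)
Your overall architecture matches the paper's: the same SNR-interpolation $X_t=\sqrt{t}\,M+W$, the identity $I(M;X_t)=\tfrac{t}{2}\Exp\|M\|_F^2-D_{\mathrm{KL}}(\P_t\|\Q)$ (via the explicit Gaussian-shift KL), the I-MMSE relation, the tower identity $\MMSE(t)=\tfrac{1}{n}\Exp\|M\|_F^2-\tfrac{1}{n}\Exp\|\widehat{E}_t\|_F^2$, monotonicity of $\MMSE$ in $t$, and the tower-plus-Cauchy--Schwarz argument for~(\ref{eq:overlap_general}). The only substantive difference is at the endpoint $t=1$, and there your proposal has a gap.

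The paper does not use a diagonal/continuity argument. Instead it works with $f(\beta):=\limsup_n\MMSE_n(\beta)$: reverse Fatou together with the trivial bound $\MMSE_n(\beta)\le c_n:=\tfrac{1}{n}\Exp\|M\|_F^2$ forces $f(\beta)=c:=\lim c_n$ for a.e.\ $\beta\in[0,1]$, and the paper then invokes the fact that $f$ is nonincreasing (since each $\MMSE_n$ is) to conclude $f\equiv c$ on all of $[0,1]$, in particular at $\beta=1$, which is exactly~(\ref{eq:conditional_mean}).

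Your diagonal step does not go through as written. From $g_n(\tau):=\tfrac{1}{n}\Exp\|\widehat{E}_\tau\|_F^2$ nondecreasing and $g_n(\tau_n)\to 0$ you want $\liminf_n g_n(1)=0$, but monotonicity yields only $g_n(1)\ge g_n(\tau_n)$, the wrong direction. Pointwise continuity of $t\mapsto g_n(t)$ for each fixed $n$ is insufficient: absent a modulus of continuity uniform in $n$, you cannot exclude profiles such as $g_n(t)=c\,(1-n(1-t))_+$, for which $\int_0^1 g_n\to 0$ and $g_n(\tau_n)\to 0$ whenever $n(1-\tau_n)\to\infty$, yet $g_n(1)\equiv c$. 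The ``appropriate subsequence'' remark does not supply this uniformity. If you want to repair the argument, follow the paper and reason about the nonincreasing function $\beta\mapsto\limsup_n\MMSE_n(\beta)$ directly rather than attempting to push a pointwise-in-$n$ continuity statement across a moving endpoint.
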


\noindent When $M= (\snr/\sqrt n)\, UV^\dagger$, where the rows of $U$ and $V$ are independently and identically distributed according to some priors, 
then the liminf in \prettyref{eq:conditional_mean} and \prettyref{eq:overlap_general} can be replaced by lim.

In cases where we use the conditional second moment method by conditioning on events $F$ that depend only on the signal $M$, \ie, $F=\{M \in F\}$, the conditional distribution $\P'$ given in \prettyref{eq:conditional_distribution_def_1} is still an additive Gaussian model. Hence, we can still apply~\prettyref{thm:MMSE} with $\P'$ and $\Q$,
 concluding that a bounded conditional second moment $\Exp_{X \sim \Q} \left(  \P'(X) / \Q(X) \right)^{2} $ implies non-reconstruction in $\P'$ and hence non-reconstruction in $\P$. 

When we need to condition on events $F_M$ that depend on  both $M$ and $X$, \ie, $F=\{X-M \in F\}$, the conditional distribution $\P'$ given in \prettyref{eq:conditional_distribution_def_2} may no longer be an additive Gaussian model, and hence~\prettyref{thm:MMSE} cannot be directly invoked. Fortunately, we are able to prove that $(1/n) D_{\mathrm{KL}} (\P' \| \Q)$ is an asymptotic upper bound to $(1/n) D_{\mathrm{KL}} (\P \| \Q).$ As a consequence,  by \prettyref{eq:secondmoment_KL} a bounded conditional second moment $\Exp_{X \sim \Q} \left(  \P'(X) / \Q(X) \right)^{2} $implies $D_{\mathrm{KL}} (\P \| \Q) = o(n)$ and hence  non-reconstruction in $\P$ by invoking~\prettyref{thm:MMSE}. 

Let $\|M\|_\ast$ denote the nuclear norm of $M$, which equals to the sum of all the singular values of $M$.
\begin{theorem} \label{thm:conditional_KL}
Let $\P(X)$ and $\Q(X)$ be the planted and null models $X = M + W$ and $X = W$ respectively, where $M$ is drawn from some prior such that $\|M\|_2=O(\sqrt{n})$ and $\|M\|_\ast=O(\sqrt{n})$ uniformly over all $M$, and where $W$ is a Gaussian or Wigner matrix.  
Suppose $\P'$ is given in \prettyref{eq:conditional_distribution_def_2} with $\P(F_M|M) = 1+o(1)$ uniformly over all $M$. 
Then 
$$
  D_{\mathrm{KL}} (\P \| \Q)  \le D_{\mathrm{KL}} (\P' \| \Q)+o(n).
$$
\end{theorem}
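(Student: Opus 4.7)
The plan is to bound the excess $D_{\mathrm{KL}}(\P\|\Q) - D_{\mathrm{KL}}(\P'\|\Q)$ directly via the identity (obtained by adding and subtracting $\int \P'(X)\log(\P(X)/\Q(X))\,\dX$)
\begin{align*}
	D_{\mathrm{KL}}(\P\|\Q) - D_{\mathrm{KL}}(\P'\|\Q) = -D_{\mathrm{KL}}(\P'\|\P) + \int \bigl(\P(X) - \P'(X)\bigr)\log\frac{\P(X)}{\Q(X)}\,\dX.
\end{align*}
Since $D_{\mathrm{KL}}(\P'\|\P)\ge 0$, it suffices to show $\int(\P-\P')\log(\P/\Q)\,\dX = o(n)$.

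Set $\alpha_M := \P(F_M|M)$, which by hypothesis satisfies $\alpha_M = 1-o(1)$ uniformly in $M$. Decomposing $\P(X|M) = \alpha_M \P'(X|M) + (1-\alpha_M)\P(X|M,F_M^c)$ and averaging over $M$ gives $\P - \P' = B - C$, where
\begin{align*}
	B(X) = \Exp_M\!\left[(1-\alpha_M)\,\P(X|M,F_M^c)\right], \qquad C(X) = \Exp_M\!\left[(1-\alpha_M)\,\P'(X|M)\right]
\end{align*}
are nonnegative with total mass $\delta := \Exp_M[1-\alpha_M] = o(1)$. The task thus reduces to bounding $\int(B+C)\,|T(X)|\,\dX$, where $T(X) := \log(\P(X)/\Q(X))$.

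Next I would establish the pointwise estimate $|T(X)| \le c_1\sqrt n\,\|X\|_2 + c_2\, n$. Writing $\P(X|M)/\Q(X) = e^{H(X,M)}$ with $H(X,M) = \tfrac12\langle X, M\rangle - \tfrac14\|M\|_F^2$ in the Wigner case (and analogously for Gaussian noise), Jensen's inequality sandwiches $T$ as $\Exp_M H(X,M) \le T(X) \le \max_M H(X, M)$. Both extremes are controlled by the operator/nuclear duality $|\langle X,M\rangle| \le \|X\|_2\|M\|_*$ together with $\|M\|_* = O(\sqrt n)$ and the key consequence of the hypotheses, $\|M\|_F^2 \le \|M\|_2\|M\|_* = O(n)$ uniformly.

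Finally, Cauchy--Schwarz plus operator-norm concentration of the noise yield $\int(B+C)|T|\,\dX = o(n)$. Combining the pointwise estimate with $\Exp_{X|M}\|X\|_2^2 \le 2(\|M\|_2^2 + \Exp\|W\|_2^2) = O(n)$ gives $\Exp_{X|M}T^2 = O(n^2)$, and then
\begin{align*}
	\Exp_{X|M,F_M^c}|T| \le \sqrt{\Exp_{X|M}T^2/(1-\alpha_M)} = O\!\left(n/\sqrt{1-\alpha_M}\right), \qquad \Exp_{X|M,F_M}|T| \le \Exp_{X|M}|T|/\alpha_M = O(n).
\end{align*}
Multiplying by $(1-\alpha_M)$, averaging over $M\sim\P$, and invoking $\Exp_M\sqrt{1-\alpha_M}\le\sqrt\delta$ by concavity of $\sqrt{\cdot}$ produces $\int B|T|\,\dX = O(n\sqrt\delta) = o(n)$ and $\int C|T|\,\dX = O(n\delta) = o(n)$. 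The main obstacle is the pointwise control of $T$: without the product-of-norms inequality $\|M\|_F^2 \le \|M\|_2\|M\|_*$ one could not derive $\|M\|_F^2 = O(n)$, and without operator-norm concentration of $W$ the second-moment estimate on $T$ would blow up. Once these ingredients are in place, the rest of the argument is convexity and Cauchy--Schwarz.
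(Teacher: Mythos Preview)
Your proposal is correct and follows essentially the same route as the paper: both start from the identity $D_{\mathrm{KL}}(\P\|\Q) - D_{\mathrm{KL}}(\P'\|\Q) = -D_{\mathrm{KL}}(\P'\|\P) + \bigl(\Exp_{\P} - \Exp_{\P'}\bigr)\log Z(X)$, drop the nonnegative KL term, establish the pointwise bound $|\log Z(X)| \le O(\sqrt n\,\|X\|_2) + O(n)$ via $|\langle X,M\rangle|\le \|X\|_2\|M\|_*$ and $\|M\|_F^2\le \|M\|_2\|M\|_*$, and then control the remaining difference by Cauchy--Schwarz together with $\Exp\|W\|_2^2 = O(n)$ and $\P(F_M^c|M)=o(1)$. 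The only cosmetic difference is that you split $\P-\P'$ into two nonnegative pieces $B$ and $C$ and bound each separately, whereas the paper keeps the difference as a single expectation $\Exp_M\,\alpha_M^{-1}\Exp_{X|M}\bigl[(\mathbf 1_{F_M}-\alpha_M)\log Z\bigr]$ and applies Cauchy--Schwarz once using $\mathrm{Var}(\mathbf 1_{F_M})=\alpha_M(1-\alpha_M)$; both yield the same $O(n)\cdot o(1)$ conclusion.
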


\prettyref{thm:conditional_KL} needs the technical assumptions that 
$\|M\|_2=O(\sqrt{n})$ and $\|M\|_\ast=O(\sqrt{n})$. These assumptions are satisfied in the sparse PCA and submatrix localization problems, but not in the Gaussian mixture clustering problem, which has a prior distribution of unbounded support. This last fact does not impact our results, because the conditioning we emply for the clustering lower bound is on the signal $M$ and not the noise, meaning that as above we can invoke~\prettyref{thm:MMSE}. To deal with prior distribution of unbounded support, one could emply a truncation argument.

\section{Proofs}

\subsection{Notation and preliminary lemmas}



We begin by proving  \prettyref{lmm:secondmomentGaussian}. The proof is discussed in~\cite[p.97]{IS03}; we give the full proof here for completeness. 
\begin{proof}[Proof of \prettyref{lmm:secondmomentGaussian}] 
We focus on the Wigner noise case as the proof for Gaussian noise case is identical except for a factor of $2$. As we noted in equation \prettyref{eq:conditional-ratio}, the conditional likelihood ratio is
	\begin{equation}
	\frac{\P(X | M)}{\Q(X)} = \frac{\Q(X-M)}{\Q(X)} 
	= \e^{\frac{1}{2} \inner{X}{M} - \frac{1}{4} \|M \|_F^2} \, .
	\end{equation}
We have $\P(X) = \Exp_{M} \,\P(X|M)$. Reversing the order of the expectations and applying~\prettyref{eq:conditional-ratio} gives
\begin{align*}
\Exp_{X \sim \Q} \,\frac{\P(X)^2}{\Q(X)^2}
&= \Exp_{X \sim \Q} \,\Exp_{M,M'} \,\frac{\P(X|M) \,\P(X|M')}{\Q(X)^2} \\
&= \Exp_{M,M'} \Exp_{X \sim \Q} \,\frac{\P(X|M) \,\P(X|M')}{\Q(X)^2} \\
&= \Exp_{M,M'} \,\e^{-\frac{1}{4} ( \|M \|_F^2 + \|M'\|_F^2)} \,\Exp_{X \sim \Q} \,\e^{\frac{1}{2} \inner{X}{M+M'}} \\
&= \Exp_{M,M'} \,\e^{-\frac{1}{4} ( \|M \|_F^2 + \|M' \|_F^2 - \|M+M'\|_F^2)} \\
&= \Exp_{M,M'} \,\e^{\frac{1}{2} \inner{M}{M'}} \, ,
\end{align*}
where in the second-to-last line we used the moment generating function $\Exp_{X \sim \Q} \,\e^{\inner{X}{A}} = \e^{ \|A \|_F^2}$. This completes the proof.
\end{proof}

In the submatrix localization and Gaussian mixture clustering problems, the underlying low-rank signal matrix $M$  arises from a balanced partition $\sigma_0: [n] \to [k]$. In these cases we can equivalently frame results about reconstruction in terms of how well we can infer this original partition. Given $\sigma_0$ and an estimated partition $\hat\sigma$, we define the \emph{overlap matrix} as a $k\times k$ matrix $\omega(\sigma_0,\hat\sigma)$ which has $s,t$ entry equal to the fraction of integers in $[n]$ assigned by $\sigma_0$ to group $s$ and by $\hat\sigma$ to group $t$, i.e.
\[
	\omega(\sigma_0,\hat\sigma)_{s,t} = \frac{|\sigma_0^{-1}(s) \cap \hat\sigma^{-1}(t)|}{n/k}
\]
and we drop the dependence on $\sigma_0$ and $\hat\sigma$ whenever clear. Our assumption that the partitions are balanced implies that $\omega$ is doubly stochastic.  

We can read off scalar measures of the correlation between two partitions $\sigma$ and $\tau$---i.e. of how well we have reconstructed the planted signal---directly from $\omega$; two will be particularly useful. It is typical in the literature to work with what we will call the \emph{trace overlap} between $\sigma$ and $\tau$, $T(\sigma,\tau) := \max_{\pi}\Tr\pi\omega(\sigma,\tau)$, where the maximum is taken over all permutation matrices. In our problems, however, it is analytically more convenient to use $L(\sigma,\tau):= \norm{\omega(\sigma,\tau)}^2_F$, which we call the \emph{$L_2$ overlap}. Both $L(\cdot)$ and $T(\cdot)$ range from $1$, when the two partitions are uncorrelated and $\omega = \J/k$, to $k$, when they are identical up to a permutation of the group labels, and $\omega$ is the corresponding permutation matrix. The following lemma states that whenever the $L_2$ overlap is bounded above $1$, the trace overlap is as well.

\begin{lemma} \label{lmm:overlap}
	For any doubly stochastic matrix $\omega$, $L(\omega) \le T(\omega)$.
\end{lemma}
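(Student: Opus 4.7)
The plan is to reduce the inequality to a one-line computation via the Birkhoff--von Neumann theorem, which states that every doubly stochastic matrix is a convex combination of permutation matrices. Concretely, I would write $\omega = \sum_{\ell} \lambda_\ell P_\ell$ with $\lambda_\ell \ge 0$, $\sum_\ell \lambda_\ell = 1$, and each $P_\ell$ a $k \times k$ permutation matrix. Because $P_\ell^\top$ is again a permutation matrix, each inner product $\Tr(P_\ell^\top \omega)$ is one of the candidates in the maximum defining $T(\omega)$, hence $\Tr(P_\ell^\top \omega) \le T(\omega)$ for every $\ell$.

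The key step is then the identity
\begin{align*}
L(\omega) \;=\; \|\omega\|_F^2 \;=\; \Tr(\omega^\top \omega) \;=\; \Tr\!\Bigl(\Bigl(\sum_\ell \lambda_\ell P_\ell\Bigr)^{\!\top}\!\omega\Bigr) \;=\; \sum_\ell \lambda_\ell \,\Tr(P_\ell^\top \omega).
\end{align*}
Combining this with the pointwise bound $\Tr(P_\ell^\top \omega)\le T(\omega)$ and using $\sum_\ell \lambda_\ell = 1$ immediately gives $L(\omega) \le T(\omega)$, as desired.

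There is essentially no obstacle here: the only subtle point is to recognize that the Frobenius norm squared admits the rewriting $\Tr(\omega^\top \omega)$ in which one of the two copies of $\omega$ can be expanded in the Birkhoff decomposition, turning the quadratic quantity $L$ into an average of linear pairings of $\omega$ with permutation matrices, each of which is dominated by the maximum pairing $T$. I would conclude the proof in a single short paragraph in the paper, with no further machinery required.
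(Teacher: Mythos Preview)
Your proposal is correct and is essentially identical to the paper's own proof: the paper also invokes Birkhoff's theorem, writes $\omega = \sum_\pi a_\pi \pi$, expands $L(\omega) = \Tr(\omega^\dagger \omega) = \sum_\pi a_\pi \Tr(\pi^\dagger \omega)$, and bounds this convex combination by the maximum $T(\omega)$. There is nothing to add.
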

\begin{proof}
This lemma is an immediate consequence of Birkhoff's theorem. We simply expand $\omega = \sum_\pi a_\pi \pi$ as a convex combination of permutation matrices $\pi$ and observe that
\[
	L(\omega) 
	= \Tr \omega^\dagger \omega 
	= \sum_{\pi} a_\pi \Tr\pi^\dagger \omega \le \max_{\pi}\Tr\pi^\dagger \omega 
	= T(\omega),
\]
the final inequality following from $\sum_\pi a_\pi = 1$.
\end{proof}
\noindent Thus, to show that an estimator $\widehat \sigma$ achieves trace overlap with the planted partition $\sigma_0$ bounded above one, it is sufficient to show that $L(\widehat \sigma,\sigma_0) > 1$.

 Finally, in the submatrix localization and Gaussian mixture clustering problems, our second moment calculations will reduce to the computation of $\Exp_{\sigma,\tau}\left[\exp(\xi (\|\omega\|_F^2 - 1)) /2\right]$ where the expectation is over uniformly random pairs of balanced partitions $\sigma$ and $\tau$ with overlap matrix $\omega$, and $\xi$ is a parameter corresponding to the signal-to-noise ratio. By~\cite[Lemma 6]{achlioptas-naor}, it is straightforward to prove the following lemma giving a sufficient condition to guarantee that this expectation, and therefore the second moment as a whole, is bounded by a constant. We will state the lemma in notation consistent with \cite{achlioptas-naor} and discuss afterwords.

\begin{lemma}\label{lmm:laplacemethod}
Assume that $\varphi(\omega)$ is an $\R$-valued function of doubly-stochastic $k\times k$ matrices $\omega$ with the properties that $\varphi(\J/k) = 0$ and for some $\delta>0$ and every $k \times k$ doubly stochastic matrices $\omega$,
$$
	H(\omega) + \varphi(\omega) \le H(\J/k) + \varphi(\allones/k) - \delta\left(\|\omega\|_F^2 - 1\right).
$$
Then there exists a constant $C$, dependent on $k$ and $\delta$, such that 
$$
	\Exp_{\sigma,\tau} \left[ \e^{n \varphi(\omega) }  \right] \le C. 
$$ 
\end{lemma}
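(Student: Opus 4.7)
The plan is to apply Laplace's method to the discrete distribution of the overlap matrix $\omega = \omega(\sigma, \tau)$. Since $\sigma$ and $\tau$ are uniformly random balanced partitions, $\omega$ ranges over those doubly-stochastic $k \times k$ matrices whose entries are multiples of $k/n$, and its probability mass function is the ratio of multinomial coefficients
$$
\Prob_{\sigma, \tau}[\omega] = \binom{n}{n/k, \ldots, n/k}^{-1} \prod_{s=1}^k \binom{n/k}{\omega_{s,1}\, n/k, \ldots, \omega_{s,k}\, n/k}.
$$
First I would use Stirling's formula to show that, uniformly in $\omega$ bounded away from the boundary of the doubly-stochastic polytope,
$$
\Prob_{\sigma, \tau}[\omega] \le C_1(k) \, n^{-(k-1)^2/2} \, \exp\!\bigl( n(H(\omega) - H(\J/k)) \bigr),
$$
where $H$ is the entropy normalization appearing in the hypothesis, and $(k-1)^2$ is precisely the dimension of the affine subspace of doubly-stochastic matrices (the codimension of the row- and column-sum constraints).

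Next I would invoke the hypothesis. Since $\varphi(\J/k) = 0$ and $H(\omega) + \varphi(\omega) - H(\J/k) \le -\delta(\|\omega\|_F^2 - 1)$, together with the elementary identity $\|\omega\|_F^2 - 1 = \|\omega - \J/k\|_F^2$ (which follows because $\langle \omega - \J/k, \J/k \rangle = 0$ when $\omega$ is doubly stochastic), the Stirling bound turns the expectation into
$$
\Exp_{\sigma,\tau}\bigl[\e^{n\varphi(\omega)}\bigr] \le C_1(k) \, n^{-(k-1)^2/2} \sum_\omega \exp\!\bigl(-n \delta \, \|\omega - \J/k\|_F^2\bigr).
$$
Then I would bound the resulting discrete Gaussian sum. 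Writing $\omega = \J/k + \Delta$, with $\Delta$ ranging over a sublattice of mesh $k/n$ in the $(k-1)^2$-dimensional subspace of matrices with vanishing row and column sums, a Riemann-sum comparison with $\int \e^{-n\delta\|\Delta\|_F^2}\, \diff\Delta$ gives a total mass of order $(n/k)^{(k-1)^2} (n\delta)^{-(k-1)^2/2} = O(n^{(k-1)^2/2})$. This precisely cancels the polynomial factor from Stirling and yields an $n$-independent bound $C = C(k, \delta)$.

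The main technical obstacle is controlling the uniformity of the Stirling estimate near the boundary of the doubly-stochastic polytope, where some $\omega_{st}$ may be small or vanish and the sharp local CLT form of Stirling degrades. There one instead uses the universal upper bound $\binom{n}{a_1, \ldots, a_k} \le \e^{nH(a_1/n, \ldots, a_k/n)}$; the hypothesis already forces $\|\omega - \J/k\|_F^2$ to be bounded below for such boundary $\omega$, so their contribution is exponentially small and absorbed into the final constant. Since this mesh-and-boundary accounting is carried out essentially verbatim in \cite[Lemma~6]{achlioptas-naor}, which is cited in the statement, I would invoke that result as a black box rather than repeat its somewhat tedious local-CLT manipulations.
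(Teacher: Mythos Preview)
Your proposal is correct and follows essentially the same route as the paper: both write the expectation as a weighted sum over overlap matrices, apply Stirling's approximation to extract the exponent $n(H(\omega)-H(\J/k)+\varphi(\omega))$, and then invoke \cite[Lemma~6]{achlioptas-naor} as a black box for the Laplace-method estimate. The only cosmetic difference is that the paper factors the probability as $\frac{(n/k)!^{2k}}{n!^2}\cdot\frac{n!}{\prod\Omega_{s,t}!}$ and tracks a polynomial $n^{k-1}$ that cancels against the $n^{-(k-1)}$ coming from Achlioptas--Naor's lemma, whereas you keep the probability intact and track $n^{\pm(k-1)^2/2}$ from the local CLT and the $(k-1)^2$-dimensional Gaussian sum; both accountings are correct and lead to the same $O(1)$ conclusion.
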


\begin{proof}
Let $\Omega=(\Omega_{st})$ such that $\Omega_{s,t}=|  \sigma^{-1} (s) \cap \tau^{-1} (t ) | $. Then $\omega= k\Omega /n$. Denote by $\calD$ the set of all $k \times k$ matrices $\Omega=(\Omega_{st})$ of nonnegative integers such that the sum of each row and each column is $n/k$. Notice that for a given $\Omega=(\Omega_{st})$, there are precisely $n! / \prod_{s,t}  \Omega_{s,t}  ! $ pairs of balanced partitions $(\sigma, \tau)$ with overlap matrix  given by $\Omega$. Hence,
\begin{align} \label{eq:second_moment_sub_asymp}
	\Exp_{\sigma,\tau} \left[ \e^{n \varphi(\omega) }  \right] 
	& = \frac{(n/k)!^{2k}}{n!^2} \sum_{ \Omega \in \calD} \frac{n!} {  \prod_{s,t} \Omega_{s,t} !}  \exp \left( n \varphi (\omega ) \right)  \nonumber \\
	& \le  k^{-2n} \frac{e^{2k} n^{k-1}}{ 2 \pi k^k } \sum_{\Omega \in \calD} \frac{n!} {  \prod_{s,t}  \Omega_{s,t} !}  \exp \left(\varphi ( \omega) \right), 
\end{align}
where we have used Stirling's approximation $ \sqrt{2\pi n}(n/\e)^n \le n! \le \e\sqrt n (n/\e)^n$ in the last step.
In view of the assumption and~\cite[Lemma 6]{achlioptas-naor}, there exists a constant $C'>0$ dependent on $\delta $ and $k$ such that 
$$
	\sum_{\Omega \in \calD} \frac{n!} {  \prod_{s,t}  \Omega_{s,t}  !} \e^{ n \varphi (\omega) }
	\le \frac{C'}{n^{k-1}} \left( k^2 \e^{\varphi(\allones/k) } \right)^n = \frac{C'}{n^{k-1}} k^{2n} .
$$
Combing the last displayed equation with \prettyref{eq:second_moment_sub_asymp} yields that
$$
	\Exp_{\sigma,\tau} \left[ \e^{n \varphi(\omega) }  \right]  \le   \frac{C' \e^{2k}  }{ 2\pi   k^k } := C,
$$
which completes the proof.
\end{proof}

Let us unpack the conditions of the above lemma in the context of our problems. For us, $\varphi(\omega) = (\xi/2)\left(\|\omega\|_F^2 - 1\right)$, and since $H(\J/k) = \log k$ and $\|\J/k\|_F^2 = 1$, it is in fact sufficient to study the function
\begin{align}
	\Phi(\omega) = H(\omega) - \log k + \frac{\xi}{2}\left(\|\omega\|_F^2 - 1 \right). \label{eq:phiomega}
\end{align}
In particular, the hypotheses of \prettyref{lmm:laplacemethod} are satisfied provided that $\Phi(\omega) \le -\delta \left(\|\omega\|_F^2 - 1 \right)$ 
for every doubly stochastic matrix $\omega$ and some $\delta > 0$.
This is the case whenever $\xi$ is sufficiently small. On the other hand, in the limit $\xi \to \infty$, $\Phi(\omega)$ is maximized by any permutation matrix, the doubly stochastic matrices with maximal Frobenius norm and minimal entropy. It has been conjectured that, for general $\xi$, the maximizer is a convex combination of $\allones/k$ and a permutation matrix, but this has not been proved.

The precise value of $\xi$ up to which $\Phi(\omega)$ is maximized by $\omega=\allones/k$ is not known. Fortunately, Achlioptas and Naor, in their second moment lower bound on the $k$-colorability threshold for \ER graphs~\cite{achlioptas-naor}, proved an upper bound on $\Phi$ by relaxing to singly stochastic matrices.
\begin{lemma} \label{lmm:A-N}
	\cite[Theorems 7,9]{achlioptas-naor} 
	When $\xi <2\log(k-1)/(k-1)$, $\argmax\, \Phi(\omega) = \allones/k$.
\end{lemma}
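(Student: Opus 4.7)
The plan is to upper-bound $\Phi$ by relaxing from doubly stochastic to row-stochastic matrices, decoupling the optimization across rows, and then extracting the sharp constant by analyzing a single-row problem. Writing $\omega$ in terms of its rows $p^{(1)},\ldots,p^{(k)}$, we have $H(\omega) = (1/k)\sum_s H(p^{(s)})$ and $\|\omega\|_F^2 = \sum_s \|p^{(s)}\|_2^2$, so $\Phi$ splits as a sum over independent rows once the column-stochastic constraint is dropped. Since $\allones/k$ is itself doubly stochastic and already achieves $\Phi(\allones/k)=0$, it suffices to prove that for every probability vector $p$ on $[k]$,
\[
  H(p) + \tfrac{k\xi}{2}\|p\|_2^2 \le \log k + \tfrac{\xi}{2},
\]
with equality only at the uniform distribution, whenever $\xi < 2\log(k-1)/(k-1)$.

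Second, I would characterize the critical points of the single-row objective $F(p) := H(p) + (k\xi/2)\|p\|_2^2$ via Lagrange multipliers: on the support of any interior critical point, the quantity $-\log p_i + k\xi p_i$ must be constant across $i$. The map $x \mapsto -\log x + k\xi x$ is strictly convex on $(0,\infty)$, so this equation admits at most two positive solutions; hence every critical $p$ takes at most two distinct nonzero values. Boundary extrema reduce to the same analysis on a lower-dimensional simplex.

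Third, I would compare $F(\allones/k)$ against every ``two-level'' configuration with $j$ coordinates equal to some $a$ and $k-j$ equal to some $b$ (with $ja+(k-j)b=1$). The corners $p=e_i$ require only $\xi < 2\log k/(k-1)$, which is weaker than the hypothesis. The genuinely binding sub-cases are the interior two-level critical points and the one-hole vector $(0,1/(k-1),\ldots,1/(k-1))$, each of which reduces, after parametrization by a single scalar, to a one-variable monotonicity check that uses the assumption $\xi < 2\log(k-1)/(k-1)$.

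The main obstacle is this final step: extracting the sharp constant rather than a weaker threshold requires a careful single-variable analysis of the two-level family, which is essentially the content of Theorems 7 and 9 of~\cite{achlioptas-naor}. The preceding steps (relaxation, row-wise decoupling, critical-point classification) are routine, so the cleanest implementation is to import that calculation directly rather than rederive it.
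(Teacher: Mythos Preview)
Your proposal is correct and follows exactly the approach the paper invokes: the paper does not prove this lemma at all but simply cites \cite[Theorems 7,9]{achlioptas-naor}, noting that the key idea is ``relaxing to singly stochastic matrices.'' Your row-wise decoupling, Lagrange-multiplier reduction to two-level vectors, and deferral of the sharp single-variable analysis to the Achlioptas--Naor paper are precisely that argument, so there is nothing to add.
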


One can obtain an even tighter bound in the special case $k=2$. 
Recall that the $2\times 2$ doubly stochastic matrices are a one-parameter family---since the upper left entry uniquely determines the matrix. 
Consequently we can rewrite $\Phi(\omega)$ in \prettyref{eq:phiomega} as
\[
	\Phi(x) =  h(x) + \frac{\xi}{2}(4x^2 - 4x + 1) - \log 2\ . 
\]
%
The second derivative of $\Phi(x)$ is $4 \xi - 1/(x(1-x))$. Hence, if $\xi < 1$,  then $\Phi(x)$ is strictly concave in $x$
and  $\arg \max_{x} \Phi(x) = 1/2$. 
\begin{lemma}\label{lmm:A-N_k_two}
When $k=2$ and $\xi<1$, $\arg \max_{\omega } \Phi(\omega) = \allones/2$. 
\end{lemma}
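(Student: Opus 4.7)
The plan is to leverage the one-parameter description of $2\times 2$ doubly stochastic matrices that the excerpt already introduces, and turn the problem into an elementary one-variable concavity argument. Writing $\omega_{11}=\omega_{22}=x$ and $\omega_{12}=\omega_{21}=1-x$ for some $x\in[0,1]$, the expression
\[
\Phi(x)=h(x)+\frac{\xi}{2}(4x^2-4x+1)-\log 2
\]
is already recorded in the text, as is the second derivative
\[
\Phi''(x)=4\xi-\frac{1}{x(1-x)}.
\]
First I would observe that for any $x\in(0,1)$, $x(1-x)\le \tfrac14$, so $1/(x(1-x))\ge 4$. Consequently, under the hypothesis $\xi<1$ we have $\Phi''(x)<4\xi-4<0$ for all $x\in(0,1)$, so $\Phi$ is strictly concave on $(0,1)$ and continuous on $[0,1]$.

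Next I would locate the critical point. Differentiating gives
\[
\Phi'(x)=\log\!\frac{1-x}{x}+2\xi(2x-1),
\]
and plugging in $x=\tfrac12$ yields $\Phi'(\tfrac12)=0$. Strict concavity on $(0,1)$ then forces $x=\tfrac12$ to be the unique interior critical point and in fact the unique maximizer of $\Phi$ on the open interval. To dispose of the boundary, I would note that $\Phi(\tfrac12)=h(\tfrac12)-\log 2=0$, whereas $\Phi(0)=\Phi(1)=-\log 2+\xi/2$, which is strictly negative whenever $\xi<2\log 2$ — and $\xi<1$ certainly satisfies this. Therefore $x=\tfrac12$ is the unique maximizer of $\Phi$ on $[0,1]$, which in matrix form is precisely $\omega=\allones/2$, as required.

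There is essentially no hard step here: every ingredient — the parameterization, the closed form of $\Phi$, the second-derivative computation, and the concavity criterion $\xi<1$ — is already written down in the paragraph just before the lemma statement. The only bookkeeping is verifying that the critical point at $x=\tfrac12$ strictly dominates the two boundary values, which is an immediate numerical check under the hypothesis $\xi<1$.
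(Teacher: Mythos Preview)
Your proof is correct and follows exactly the same approach as the paper: the paragraph preceding the lemma already records the parameterization, the formula for $\Phi(x)$, and the second-derivative computation showing strict concavity when $\xi<1$, from which the paper immediately concludes $\arg\max_x \Phi(x)=1/2$. Your added boundary check at $x\in\{0,1\}$ is harmless but in fact unnecessary, since strict concavity on $(0,1)$ together with $\Phi'(1/2)=0$ already forces $\Phi$ to be increasing on $(0,1/2)$ and decreasing on $(1/2,1)$, so the endpoints cannot compete.
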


\subsection{Sparse PCA}

In this section we prove Theorem~\ref{thm:sparse-pca}. First, we show that detection and reconstruction are possible if $\lambda > \lambda^{\up}$ 
using a first moment argument as described in Section~\ref{sec:first}: specifically, if $\lambda > \lambda^{\up}$ then with high probability in $\Q$ there are no $v$ with likelihood as high as that of the ground truth $v_0$, and with high probability in $\P$ all such $v$ are correlated with $v_0$.  
Then, we prove that the second moment of the likelihood ratio is bounded if $\lambda < \lambda^{\low}$; 
as discussed in Section~\ref{sec:contig}, this implies that detection and reconstruction are impossible if $\lambda < \lambda^{\low}$. 

\subsubsection{First moment upper bound for sparse PCA}

Recall that $X = (\lambda/\sqrt{n}) v_0 v_0^\dagger + W$, where $v_0$ is drawn uniformly from $\calV = \{v \in \{\pm \gamma^{-1/2},0\} : |\supp(v)| = \gamma n\}$ and $W$ is a Wigner matrix. For \emph{any} $v \in \calV$, using~\prettyref{eq:conditional-ratio}, the conditional log likelihood ratio is 
\begin{align*}
	\log \frac{\P(X | v)}{\Q(X)} 
	&= \frac{1}{2} \inner{M}{X} - \frac{1}{4} \|M\|_F^2 \\
	&= \frac{1}{2} \frac{\lambda}{\sqrt{n}} \inner{v}{Xv} - \frac{1}{4} \frac{\lambda^2}{n} \|v\|_2^4 \\
	&= \frac{1}{2} \frac{\lambda}{\sqrt{n}} \inner{v}{Xv} - \frac{\lambda^2  n}{4} \, . 
\end{align*}
In the null model, $X = W$ and $\inner{v}{Xv}$ is distributed as $\calN(0,2 n^2)$, giving
\begin{equation} \label{eq:pca-dist-null}
	\log\frac{\P(X | v)}{\Q(X)} \sim \calN \!\left(-\frac{\lambda^2 n}{4},\frac{\lambda^2 n}{2}\right).
\end{equation}
On the other hand, in the planted model
\[
	\inner{v}{Xv} 
	= \inner{v}{Wv} + \frac{\lambda}{\sqrt n} \inner{v}{v_0v_0^\dagger v} 
	= \langle v, Wv \rangle + \frac{\lambda}{\sqrt n} \inner{v}{v_0}^2,
\]
so the conditional log likelihood has a distribution which depends on the inner product between $v$ and the ground truth $v_0$. We write this inner product as $\inner{v}{v_0} = \theta n$, for $\theta \in [-1,1]$ so that
\begin{align*}
	\log \frac{\P(X \mid v)}{\Q(X)} \sim \calN \!\left(\frac{(2\theta^2 - 1)\lambda^2 n}{4}, \frac{\lambda^2 n}{2}\right)
\end{align*}
Whenever $\theta = 0$ so that $v$ is uncorrelated with $v_0$, this distribution is identical to that in the null model.

To show that detection is possible above~\prettyref{eq:sparse-pca-up}, notice that in the planted model the maximum likelihood estimate $\hat{v} = \argmax_{v \in \calV} \P(X|v)$ has conditional log likelihood at least as large as the ground truth $v_0$.  By standard Gaussian tail bounds and setting $\theta = \pm 1$ above,
\begin{equation*}
	\P\left[\log\frac{\P(X | v_0)}{\Q(X)} \le \frac{\lambda^2  n}{4} - O(\sqrt{n\log n}) \right] \le n^{-\Omega(1)}.
\end{equation*}
In the null model, Gaussian tail bounds give us
\[
	\Q \left[ \log \frac{\P(X | v)}{\Q(X)} \ge \frac{\lambda^2  n}{4} - O(\sqrt{n \log n}) \right] \le \exp\!\left( - \frac{\lambda^2  n}{4} + O(\sqrt{n\log n}) \right) \, . 
\]
Taking the union bound over all $2^{\gamma n} {n \choose \gamma n}$ possible $v$ and invoking Stirling's formula ${n\choose \gamma n} \le \,\e^{n h(\gamma)}$, we have  
\begin{equation} \label{eq:sparse-pca-union}
	\Q \left[ \max_{v \in \calV} \log \frac{\P(X | v)}{\Q(X)} \ge \frac{\lambda^2 n}{4} - O(\sqrt{n \log n}) \right] 
	\le \exp^{ n \left( -\frac{\lambda^2 n}{4} + h(\gamma)  + \gamma \log 2 \right) + O(\sqrt{n\log n} ) } \, .
\end{equation}
When this expression is $\e^{-\Omega(n)}$, we can with high probability distinguish the null and planted models with the generalized likelihood test. This occurs when $\lambda < \lambda^{\up}$ where $\lambda^{\up}$ is defined in~\prettyref{eq:sparse-pca-up},  \ie, 
\begin{equation}
\label{eq:sparse-pca-first}
	\frac{\lambda^2}{4} > h(\gamma) + \gamma \log 2 \, .
\end{equation}

Next, we prove that reconstruction is also possible above this bound.  Suppose that $X$ is generated from the planted model, and that $\inner{\hat{v}}{v_0} = \theta n$.  We bound the probability that $\hat{v}$ has conditional log likelihood as large as $v_0$ with a union bound over $\hat{v}$ correlated with $v_0$, but we bound the number of such vectors generously as $2^{\gamma n}{n\choose \gamma n}$ once again. Combining this with the Gaussian tail bound and invoking Stirling gives
\begin{align*}
	%
	& \P \!\left[ \max_{ v \in \calV:  \langle v,v_0 \rangle \le \theta  n} \log\frac{\P(X | v)}{\Q(X)}
	> \frac{\lambda^2  n}{4} + O(\sqrt{n\log n}) \right] \\
	&\qquad\qquad\qquad \le \exp\!\left[ n \left( \frac{(2\theta^2 - 1)\lambda^2 }{4} + h(\gamma) + \gamma \log 2 \right) 
	+ O(\sqrt{n \log n}) \right] \, . 
\end{align*} 
Since the coefficient of $n$ in the exponent is an analytic function of $\theta$, whenever~\eqref{eq:sparse-pca-first} holds, the RHS of the last displayed equation is exponentially small unless $\theta^2 > \eps$ for some constant $\eps > 0$.  Therefore, with high probability the MLE estimator has overlap $(\inner{\hat{v}}{v_0}/n)^2 = \theta^2 > \eps > 0$ with the ground truth.  Moreover, if $\hat{M} = (\lambda/\sqrt{n} ) \hat{v} \hat{v}^\dagger$, then $\|\hat{M}\|_F^2 = (\lambda^2/n) \|\hat{v}\|_2^4 = \lambda^2 n$;
moreover, with high probability $\inner{\hat{M}}{M} = (\lambda^2 / n) \inner{\hat{v}}{v_0}^2 > \eps \lambda^2 n$.
Hence, the estimator $\hat{M}$ reconstructs the signal matrix better than chance.

\subsubsection{Second moment lower bound for sparse PCA}

In this subsection, we use the second moment method to prove a lower bound on the detectability transition for sparse PCA. 
 We assume $\lambda<1$ throughout the proof; 
the boundary case $\lambda=1$ is not addressed. 
In the planted model, the signal matrix is $M = (\lambda / \sqrt{n}) vv^\dagger$, and in the null model it is zero.  In both cases, we have $X=M+W$ where $W$ is a Wigner noise matrix. 
Applying~\prettyref{lmm:secondmomentGaussian}, the second moment (where $X$ is drawn from the null model) is 
\begin{align} \label{eq:second}
	\Exp_{X \sim \Q}  \left( \frac{\P(X)}{\Q(X)} \right)^{\!2\,}  = \Exp_{v,w} \, \e^{\frac{\lambda^2}{2n} \inner{v}{w}^2}  \, ,
\end{align}
where $v$ and $w$ are drawn independently from the prior.

Denote the overlap $z = |\supp(v) \cap \supp(w)|$ and let $t$ be the difference between the number of indices in that intersection where $v$ and $w$ agree and the number of indices where they disagree. In that case,
\[
\inner{v}{w} = \frac{t}{\gamma}\ .
\]
Then, $z$ follows an hypergeometric distribution with parameter $(n,\gamma n,\gamma n)$ and given $z$, $t$ is distributed as a sum of $z$ independent Rademacher random variables. If we write 
\begin{equation}
\label{eq:lambda}
\eta=\lambda/\gamma \, , 
\end{equation}
then
\[
	\Exp_{X \sim \Q} \left( \frac{\P(X)}{\Q(X)} \right)^{\!2\,} 
	= \Exp_t  \left[ \e^{ \frac{ \eta^2 t^2}{2n} } \right] \, .
\]
We know from~\cite[p.173]{aldous85} that $z$ has the same distribution as the random variable $\mathbb{E}[\overline{z}|\mathcal{B}_n ]$ where $\overline{z}$ is a Binomial random variable with parameters $(\gamma n,\gamma)$ and $\mathcal{B}_n$ some suitable $\sigma$-algebra. Given $\overline{z}$, let $\overline{t}$ be distributed as a sum of $\overline{z}$ independent Rademacher variables. 
\begin{lemma}
Let $(\epsilon_i)_{i\geq 1}$ denote a sequence of independent Rademacher random variables. For any $a>0$,  let $g$ be the piecewise linear function on $[1,\infty)$ such that,
for any positive integer $z$, $g(z)= \Exp_{\epsilon}\exp[a(\sum_{i=1}^z \epsilon_i)^2]$. Then, the function $g$ is convex.
\end{lemma}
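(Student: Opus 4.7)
The plan is to reduce convexity of $g$ to a discrete second-difference inequality at the integer nodes. Since $g$ is piecewise linear on $[1,\infty)$ with breakpoints at the positive integers, it is convex if and only if for every integer $z\geq 2$ we have $g(z+1)-2g(z)+g(z-1)\geq 0$. So the task becomes proving this inequality for $g(z)=\Exp_\epsilon \exp[a(\sum_{i=1}^z \epsilon_i)^2]$.

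The key idea is to linearize the quadratic in the exponent via a Gaussian integral representation: for any $x\in\R$ and $a>0$,
\[
e^{ax^2}=\Exp_{Y}\exp\!\bigl(\sqrt{2a}\,xY\bigr), \qquad Y\sim\calN(0,1).
\]
Applying this with $x=\sum_{i=1}^z \epsilon_i$ and interchanging the two expectations by Fubini (all quantities are positive), the independence of the $\epsilon_i$ factors the inner Rademacher expectation into $z$ identical factors of $\cosh(\sqrt{2a}\,Y)$, yielding the clean product representation
\[
g(z)=\Exp_Y\cosh\!\bigl(\sqrt{2a}\,Y\bigr)^{z}.
\]

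From here the proof is essentially one line. Writing $Z:=\cosh(\sqrt{2a}\,Y)$, one has $Z\geq 1$ almost surely, so the discrete second difference becomes
\[
g(z+1)-2g(z)+g(z-1)=\Exp_Y\bigl[Z^{z-1}(Z-1)^2\bigr]\geq 0,
\]
since the integrand is almost surely nonnegative for $z\geq 1$. This proves convexity at the integer nodes, and hence convexity of the piecewise linear interpolant on all of $[1,\infty)$.

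I do not expect any serious obstacle: the main insight is to replace the quadratic in $S_z$ by an auxiliary Gaussian integral, which decouples the Rademacher expectations into a product. A more naive attack---trying to verify $(P-I)^2 f\geq 0$ pointwise for the random-walk transition operator $P$ and $f(x)=e^{ax^2}$---breaks down for large $a$, which is why passing to the Gaussian representation (where the positivity becomes an almost-sure statement rather than a pointwise one) is essential.
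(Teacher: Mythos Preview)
Your proof is correct and takes a genuinely different route from the paper's. The paper proceeds by conditioning on $u=\sum_{i=1}^z \epsilon_i$, computing $g(z+1)=\Exp_u[e^{au^2+a}\cosh 2au]$ and $g(z+2)=\Exp_u[e^{au^2}(\tfrac12 e^{4a}\cosh 4au+\tfrac12)]$, and then reducing the second-difference inequality to the pointwise bound $e^a\cosh 2au\le \tfrac14 e^{4a}\cosh 4au+\tfrac34$, which it verifies by observing that $h(x)=e^{ax}\cosh(2au\sqrt{x})$ is convex on $[0,\infty)$ and interpolating $h(1)\le \tfrac34 h(0)+\tfrac14 h(4)$. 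Your Gaussian linearization $e^{aS_z^2}=\Exp_Y e^{\sqrt{2a}\,S_z Y}$ bypasses this completely: once you have the product form $g(z)=\Exp_Y[\cosh(\sqrt{2a}\,Y)^z]$ with $Z:=\cosh(\sqrt{2a}\,Y)\ge 1$, the second difference is $\Exp_Y[Z^{z-1}(Z-1)^2]\ge 0$ and you are done. This is shorter and more conceptual; in fact it shows that $z\mapsto \Exp_Y Z^z$ is convex on all of $\reals$, not just at the integer nodes, so the piecewise linear interpolant is automatically convex. The paper's argument, by contrast, stays entirely within finite Rademacher calculations and never introduces an auxiliary Gaussian, which makes it slightly more self-contained but at the cost of a less transparent endgame.
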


\begin{proof}
Since $g$ is convex and continuously differentiable on each interval of the form $(z,z+1)$ where $z$ is an integer, we only have to prove that its left derivative is less or equal to its right derivative at $z+1$. This is equivalent to showing that $g(z+2)+g(z)\geq 2 g(z+1)$. Define $u=\sum_{i=1}^z \epsilon_i$. Conditioning with respect to $u$, we obtain $g(z+1)= \Exp_{u}[\e^{au^2+a} \cosh 2au]$ and $g(z+2)= \Exp_{u}[\e^{au^2}[0.5 \e^{4a} \cosh 4au + 0.5]$. Hence, it suffices to prove that, for any $u$, 
\[
  \e^a \cosh 2au \leq \frac{1}{4} \e^{4a} \cosh 4au + \frac{3}{4} \, .
\]
For $x\geq 0$, let $h(x) = \e^{ax} \cosh(2au\sqrt{x})$. Since $h$ is a product of two increasing convex functions, $h$ is also convex. This implies the above inequality and concludes the proof. 
\end{proof}

By Jensen's inequality, it follows that 
\begin{align} \label{eq:sparse_pca_second_moment_1}
	\Exp_{X \sim \Q} \left( \frac{\P(X)}{\Q(X)} \right)^{\!2\,} 
	\leq  \Exp_{\overline{t}}  \left[ \e^{ \frac{ \eta^2 \overline{t}^2}{2n} } \right] \, .
\end{align}
To simplify the notation, we simply write $z$ and $t$ instead of $\overline{z}$ and $\overline{t}$ in the sequel. Note that 
\begin{align}
	\Exp_t \!\left[  \e^{ \frac{ \eta^2 t^2}{2n} } \mid z\right] 
	&= \sum_{u=0}^{z} \e^{\frac{ \eta^2 u^2}{2n} } \left( \P_{t}\left[|t|\geq u \mid z \right]- \P_{t}\left[|t|\geq u+1  \mid z\right] \right) \nonumber \\
	&\le 1 + \sum_{u=1}^z \frac{\eta^2 u}{n} \e^{\frac{ \eta^2 u^2}{2n} } \P_{t}\left[|t|\geq u \mid z \right] \label{eq:upper_conditional} \\
	&\le 1 + 2\sum_{u=1}^z \frac{\eta^2 u}{n} \e^{\frac{  u^2}{2}\left(\frac{\eta^2}{n} - \frac{1}{z} \right) }\ ,\label{eq:upper_conditional2}
	\end{align}
where we applied Hoeffding's inequality: $\P_{t}(|t|\geq u \mid z) \le 2 \e^{-u^2/(2z) }$ in the last line. 

We consider three subcases depending on the value of $z$. 

\noindent {\bf Case 1}: If $z \leq (n/\eta^2) \lambda$,
\begin{eqnarray} \nonumber
 \Exp_t \!\left[  \e^{ \frac{ \eta^2 t^2}{2n} } \mid z\right]
 &\leq & 1 + 2\sum_{u=1}^z \frac{\eta^2 u}{n} \e^{\frac{u^2}{2}\left(\frac{\eta^2}{n}- \frac{1}{z} \right) } \leq 1 + O(1)\int_0^{\infty}\frac{\eta^2 u}{n} \e^{\frac{  -u^2}{2}\left( \frac{1}{z}-\frac{\eta^2}{n} \right)}du\\
 &\leq & O(1)\left[1 + \frac{\eta^2}{n/z- \eta^2}\right]\leq O\left(\frac{1}{1-\lambda}\right) \label{eq:upper_small_z}\ ,		
\end{eqnarray}
where we used $z \le \frac{n\lambda}{\eta^2}$ in the last inequality. \\

\noindent {\bf Case 2}: If $(n/\eta^2) \lambda < z \le n/\eta^2$, we have
\begin{eqnarray}
  \Exp_z\left[\Exp_t \!\left[  \e^{ \frac{ \eta^2 t^2}{2n} } \mid z\right]\indicator{z\in (\tfrac{n}{\eta^2}\lambda , \tfrac{n}{\eta^2}]}\right]&\leq &\Exp_z\left[\left(1 + 2\sum_{u=1}^z \frac{\eta^2 u}{n} \e^{\frac{u^2}{2}\left(\frac{\eta^2}{n}- \frac{1}{z} \right) }\right) \indicator{z\in (\tfrac{n}{\eta^2}\lambda , \tfrac{n}{\eta^2}]}\right] \nonumber \\
  & \leq & \Exp_z\left[ \left( 1+  \frac{\eta^2 (z+1)^2}{n} \right) \indicator{z\in (\tfrac{n}{\eta^2} \lambda, \tfrac{n}{\eta^2}] }\right] \nonumber \\
  &\leq  &  O( 1 + n \gamma )\P_z\left[z\geq \tfrac{n}{\eta^2}\lambda \right]\nonumber, 
\end{eqnarray}
where in the second inequality we bound $\exp\!\left( u^2/2\, \left(\eta^2/n - 1/z \right) \right) \le 1$ and in the third inequality we use $z\leq n/\eta^2$ and $z \le n\gamma$. 
Since $\lambda / \eta^2 = \gamma^2 / \lambda > \gamma^2$ 
and $z \sim \Binom(\gamma n,\gamma)$, Bernstein's inequality implies that $\P_z\left[z \geq (n / \eta^2) \lambda \right]$ is exponentially small. Therefore,  
\begin{equation} \label{eq:upper_medium_z}
  \Exp_z\!\left[\Exp_t \!\left[ \e^{ \frac{ \eta^2 t^2}{2n} }|z\right]\indicator{z\in (\tfrac{n}{\eta^2}\lambda, \tfrac{n}{\eta^2}]}\right] = o(1)\ . 
\end{equation}

\noindent {\bf Case 3}: Finally, if $n/\eta^2 < z \le \gamma n$, then $\exp\!\left( u^2/2\, \left(\eta^2/n - 1/z \right) \right)$ is exponentially large in $u$. Thus to show that the second moment is finite, we need to use the fact that $\P_z[ z \geq a]$  for  $a>n/\eta^2$ is exponentially small. When $z> n/\eta^{2}$, the bound in\eqref{eq:upper_conditional2} implies that
\[
	\Exp_t \!\left[  \e^{ \frac{ \eta^2 t^2}{2n} } \mid z\right]\leq 1 + 2 \e^{\frac{z^2\eta^2}{2n} - \frac{z}{2} } \sum_{u=1}^z \frac{\eta^2 u}{n} \leq O(n)   \exp\!\left[\frac{z^2\eta^2}{2n} - \frac{z}{2} \right]\ , 
\]
where we applied $z\leq \gamma n$ and the second inequality holds due to $\lambda<1$. 
For any  $0<p_0< p_1\leq 1$, define 
\begin{equation} \label{eq: hp-def}
	h_{p_0}(p_1)=p_1\log(\tfrac{p_1}{p_0})+ (1-p_1) \log\left( \frac{1-p_1}{1-p_0} \right) \, ,
\end{equation}
the KL-divergence between Bernoulli random variables with parameters $p_0$ and $p_1$ respectively. Chernoff's inequality implies that for any $a\geq \gamma^2 n$, 
\[
 \P_z[ z \geq a]\leq \exp \left [- \gamma n h_{\gamma}\left(\frac{a}{\gamma n}\right)\right ] \ . 
\]
As a consequence, 
\begin{eqnarray*}
  \Exp_z \!\left[\Exp_t \!\left[  \e^{ \frac{ \eta^2 t^2}{2n} } \mid z \right] \indicator{z\in (\tfrac{n}{\eta^2},\gamma n ]} \right]& \leq& O(n)\sum_{z=n/\eta^2}^{\gamma n} \exp\!\left[\frac{z^2\eta^2}{2n} - \frac{z}{2} - \gamma n h_{\gamma}\left(\frac{z}{\gamma n}\right)\right]\\
  &\leq & O(n^2) \exp\!\left[ n \sup_{\zeta\in [1/\eta^2,\gamma]}  \psi(\zeta) \right] \, , 
\end{eqnarray*}
where 
\begin{equation}\label{eq:psi}
\psi(\zeta) 
= \frac{(\eta^2 \zeta-1)\zeta}{2}- \gamma h_{\gamma}(\zeta/\gamma)
 = \frac{(\eta^2 \zeta-1)\zeta}{2} - \zeta \log\left(\frac{\zeta}{\gamma }\right) - (\gamma - \zeta)  \log\left(\frac{\gamma -\zeta}{1-\gamma }\right)+  \gamma \log(\gamma)\ , 
\end{equation}
Then,  we need to show that 
\begin{equation}
\label{eq:cond}
\psi(\zeta) < 0 
\quad \text{for all} \quad 
\zeta \in [\gamma^2/\lambda^2, \gamma] \, ,
\end{equation}

Note that this property is trivial when $\gamma/\lambda^2>1$ so that we may restrict our attention to $\gamma/\lambda^2\leq 1$. Since $\lambda <1$, it follows that $\psi(\gamma^2/\lambda^2)= - \gamma h_\gamma (\gamma/\lambda^2)<0$.
For $\zeta=\gamma$, $\psi(\zeta)= (\lambda^2-\gamma)/2 -\gamma \log(1/\gamma)\ ,$ which is negative for 
\begin{equation}\label{eq:cond2}
 \lambda< \sqrt{2 \gamma(- \log \gamma + 1/2)}\ .
 \end{equation}
Moreover, the second derivative of $\psi$ has at most two roots.  It follows that $\psi(\zeta)$ can have at most three local minima or maxima occurring at the roots of 
\[
\psi' = \eta^2 \zeta - \frac{1}{2} - \log \left(\frac{\zeta (1-\gamma)}{\gamma(\gamma-\zeta)}\right) \, . 
\]
Figure~\ref{fig:sparse-pca-plot} shows $\lambda$ as a function of $\gamma$, illustrating in particular that it rapidly approaches the spectral transition $\lambda= 1$ as $\gamma$ approaches $1$.  

\begin{figure}
\begin{center}
\includegraphics[width=3in]{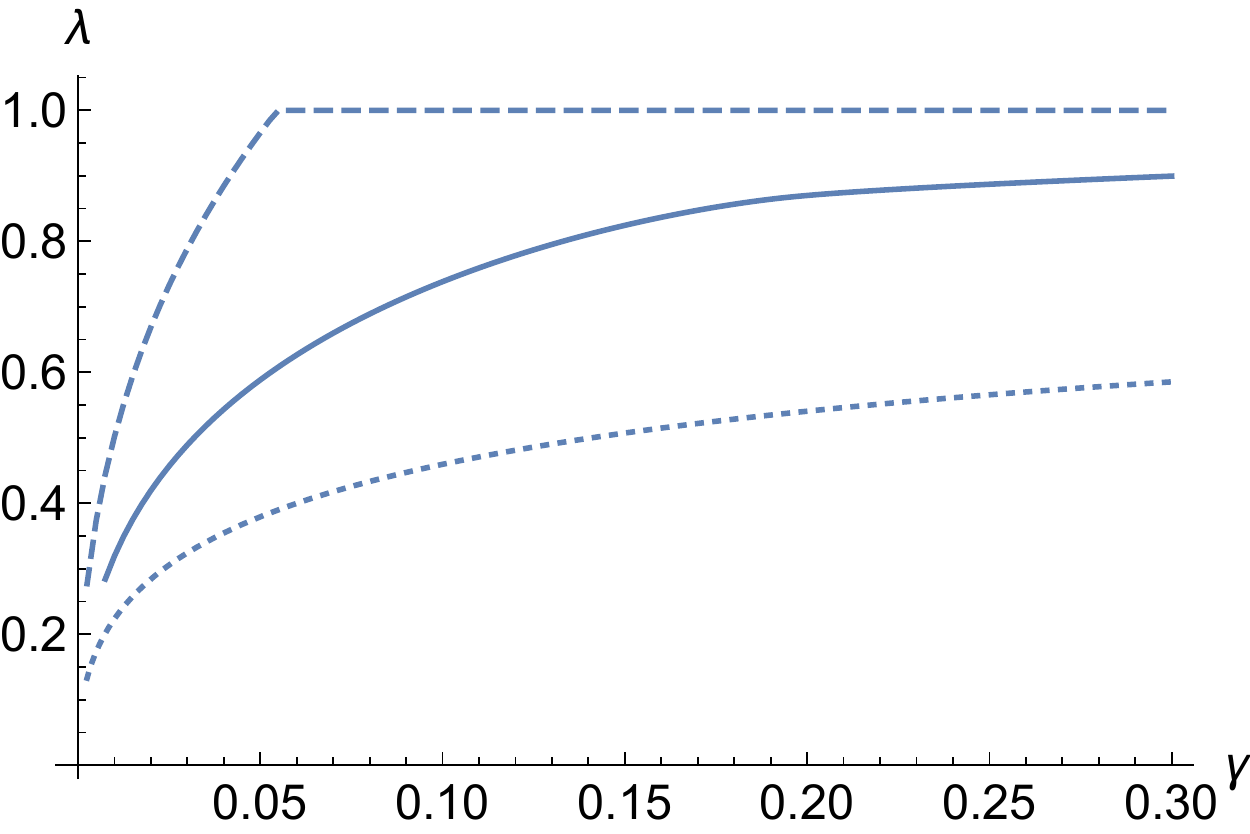}
\end{center}
\caption{Upper and lower bounds on the information-theoretic detectability threshold in sparse PCA.  Our upper bound (dashed) shows that the detectability threshold is strictly below the spectral threshold $\lambda=\gamma \eta=1$ for $\gamma < 0.054$.  The solid line shows the lower bound given by~\eqref{eq:cond}, and the dotted line shows the simpler lower bound given by~\eqref{eq:lambert}.
Notice that  the solid line is not touching the dashed line even in the limit $\gamma \to 0$; there is a gap of $\sqrt{2}$. }
\label{fig:sparse-pca-plot}
\end{figure}

We now derive an analytic bound of $\psi$. 
For $\zeta\in (\gamma^2/\lambda^2, \gamma]$, 
\begin{align*}
	\psi(\zeta) 
	&\le \zeta \frac{\eta^2 \gamma - 1}{2} - \zeta \log\left( \frac{\zeta}{\gamma^2} \right) + \gamma (1- \zeta/\gamma)\log\left( \frac{1-\gamma}{ 1-\zeta/\gamma} \right) \\
	&\le \zeta \frac{\eta^2 \gamma - 1}{2} - \zeta \log\left( \frac{\zeta}{\gamma^2} \right) +\zeta - \gamma^2 \\
 	&\le \zeta\left( \frac{\lambda^2/\gamma + 1}{2} + 2 \log \lambda \right) \, ,
\end{align*}
where we used $\log(1+x)\leq x$ in the second line and $\zeta\geq \gamma^2/\lambda^2 \ge \gamma^2$ in the third line.

Therefore, $\Exp_z \!\left[\Exp_t \!\left[ \exp\left(\eta^2t^2/2n\right) \mid z\right]\indicator{z\in (n/\eta^2,\gamma n ]}\right]$, and hence the second moment, are bounded as long as
\begin{align} \label{eq:secondmoment_condition_1}
	\log  \lambda^2 + \frac{\lambda^2/\gamma + 1}{2} < 0 \, . 
\end{align}
If, for any $y>0$, we use $\calW(y)$ to denote the  root $x$ of $x \e^x = y$, \prettyref{eq:secondmoment_condition_1} holds whenever
\begin{equation} \label{eq:lambert}
	\lambda <  \sqrt{ 2 \gamma \calW\!\left( \frac{1}{2 \sqrt{\e} \,\gamma} \right)} \, . 
\end{equation}
As $y \to \infty$, we have $\calW(y) \sim \log y$.  Thus as $\gamma \to 0$, this lower bound approaches
\begin{equation} \label{eq:lambert-simple}
	\lambda^{\low} \sim \sqrt{ 2 \gamma \log \frac{1}{2 \sqrt{\e} \,\gamma} } \, .
\end{equation}
\bigskip 

Let us now turn to regime $\gamma >0.6$, where we will show the second moment is bounded whenever $\lambda<1$. In view of \eqref{eq:upper_small_z} and \eqref{eq:upper_medium_z}, it suffices to prove that
\begin{eqnarray*}
  \Exp_z \!\left[\Exp_t \!\left[ \e^{ \frac{ \eta^2 t^2}{2n} } \mid z\right]\indicator{z\in (\gamma^2 n ,\gamma n ]}\right]= O(1)\ ,
\end{eqnarray*}
whenever $\lambda<1$. From \eqref{eq:upper_conditional}, we have
 \begin{eqnarray*}
	\lefteqn{\Exp_z \!\left[ \Exp_t \!\left[ \e^{ \frac{ \eta^2 t^2}{2n} } \mid z\right]\indicator{z\in (\gamma^2 n ,\gamma n ]}\right]}&& \\
	&\leq  & \Exp_z \!\left[1 + \sum_{u=1}^z \frac{\eta^2 u}{n} \e^{\frac{ \eta^2 u^2}{2n} } \P_{t}\left[|t|\geq u \mid z\right] \indicator{z\in (\gamma^2 n ,\gamma n  } \right]  \\
	&\stackrel{(a)}{\leq } & 1 + \sum_{z=\gamma^2 n }^{\gamma n}\sum_{u=1}^z  \frac{\eta^2 u}{n} \e^{ \frac{ \eta^2 u^2}{2n}} \P_{t}\left[ |t|\geq u \mid z\right]  \binom{\gamma n}{z}\gamma^z (1-\gamma)^{\gamma n-z}   \\
	&\stackrel{(b)}{\leq } & 1+ O(1)\sum_{z= \gamma^2 n}^{\gamma n }\sum_{u=1}^z \frac{\eta^2 u}{n\sqrt{ \left( z (1- z/\gamma n ) \right) \vee 1}}  \e^{-\frac{(1-\lambda^2) u^2}{2n\gamma^2}} \exp\!\left[\frac{u^2}{2n\gamma^2} + z h\!\left( \frac{u}{2z}+\frac{1}{2} \right) - z\log(2) - \gamma n h_{\gamma}\!\left( \frac{z}{\gamma n} \right) \right]   \ ,
 \end{eqnarray*}
where we used in $(a)$ that $z$ follows a Binomial distribution. In step (b), $h_\gamma$ is defined as in \prettyref{eq: hp-def}, and we have used the facts that 
$$ 
	\P_{t}\left[|t|\geq u \mid z\right]  = 2 \Pr\!\left[\Binom(z,1/2) \ge \frac{u+z}{2} \right] 
	\le 2 \exp\!\left( -z h_{1/2}\!\left(\frac{u+z}{2z} \right) \right) 
$$ 
and 
$$
	\binom{\gamma n}{z} \le \left( \frac{1}{ \sqrt{2 \pi z (1-z/(\gamma n) ) } } \wedge 1 \right) \exp\!\left( \gamma n h\!\left( \frac{z}{\gamma n} \right) \right)
$$ 
for $z \le \gamma n$. 

Expanding as Taylor series, we have 
\[
	h(u)\leq \log(2) - 2(u-1/2)^2 - \frac{4}{3}(u-1/2)^4 
	\quad \text{and} \quad 
	h_{\gamma }(u)\geq \frac{(u-\gamma)^2}{2\gamma (1-\gamma )} \, . 
\]
This yields
\begin{eqnarray*}
	& &  \lefteqn{\Exp_z\!\left[\Exp_t \!\left[\e^{ \frac{ \eta^2 t^2}{2n} }| z\right]\indicator{z\in (\gamma^2 n ,\gamma n ]}\right]} \\
	&\leq & 1 +O(1)\sum_{z= \gamma^2 n}^{\gamma n }\sum_{u=1}^z \frac{\eta^2 u}{n\sqrt{ \left(z (1-z/\gamma n) \right)\vee 1}} \e^{-\frac{(1-\lambda^2) u^2}{2n\gamma^2}} \exp\!\left[n\underbrace{\left(\frac{u^2}{2(\gamma n)^2} - \frac{u^2}{2zn}- \frac{u^4}{12z^3n} - \frac{(z/(\gamma n) - \gamma)^2}{2(1-\gamma)}\right)}\right] \\
	  && \hspace{11cm}=(I) 
\end{eqnarray*}

We focus on the term (I), which is maximized with respect to $u$ by choosing 
\[ 
u^2 = \frac{1}{2} \times 12 z^3 n \left( \frac{1}{2 \gamma^2n^2 } - \frac{1}{2 z n} \right) \, . 
\] 
Hence,
 \begin{eqnarray*}
 (I) &\leq& \frac{3}{4}\frac{z^3}{\gamma^2n^3}\left(\frac{1}{\gamma}-\frac{\gamma n}{z}\right)^{2} -  \frac{(z/(\gamma n) - \gamma)^2}{2(1-\gamma)}\\
 &\leq &\left[\frac{3z}{4\gamma^2n} -\frac{1}{2(1-\gamma)}\right]\left(\frac{z}{\gamma n} - \gamma\right)^2\\
 &\leq &\left[\frac{3}{4\gamma} -\frac{1}{2(1-\gamma)}\right]\left(\frac{z}{\gamma n} - \gamma\right)^2 \ ,       
       \end{eqnarray*}
since $z\leq \gamma n$. The above expression is negative as soon as $\gamma>0.6$.
For such a choice of $\gamma$, there exist two positive constants $c_1$ and $c_2$ such that 
\[
\Exp_z\left[\Exp_t \!\left[ \e^{ \frac{ \eta^2 t^2}{2n} }|z\right]\indicator{z\in (\gamma^2 n ,\gamma n ]}\right] \leq 1 +O(1) \sum_{z= \gamma^2 n}^{\gamma n} \sum_{ u= 1}^{z} \frac{u}{n\sqrt{ \left( z ( 1- z/ \gamma n) \right)\vee 1}} \e^{-c_1 \frac{u^2}{n} }e^{- c_2\frac{(z-\gamma^2 n)^2}{n}} = O(1) \ ,
\]
where the last equality holds because $\sum_{u=1}^z (u/n) \e^{-c_1 u^2/n} = O(n)$.
This completes the proof in the regime $\gamma > 0.6$. \\

The conclusion for reconstruction follows by applying \prettyref{eq:secondmoment_KL}
and~\prettyref{thm:MMSE}.  Furthermore, for any estimator $\hat{v} \in \calV$ of $v$, applying~\prettyref{thm:MMSE} to the estimator $\hat{M} = ( \lambda/\sqrt{n} ) \hat{v} \hat{v}^\dagger$ gives
\begin{align*}
 \frac{1}{n^2} \Exp \inner{v}{\hat{v}}^2 =  \frac{1}{n \lambda^2 } \Exp \inner{M}{\hat{M} }  =  o(1) \, . 
\end{align*}
Thus we can neither reconstruct the signal matrix nor the sparse vector $v$.

\subsubsection{Conditional second moment lower bound for sparse PCA} \label{sec:conditional_sparsepca}

Bounding the second moment is too rough to pinpoint the exact information-theoretical threshold for small $\gamma$. 
Indeed, there is asymptotically an $\sqrt{2}$ gap between the $\lambda^{\low}$ and $\lambda^{\up}$. 
To recover the exact constant, we shall bound  some conditional second moment. 
Throughout this proof, we consider $\lambda$ such  that 
\begin{equation} \label{eq:cond_lambda_small_gamma}
	\lambda^2 < 4 \gamma\left[\log\left(\frac{1}{\gamma}\right)-2.1\sqrt{2\log\left(\frac{1}{\gamma}\right)} - \frac{3}{2}\log\left(\frac{3\e}{1-\gamma}\right)\right]\leq 4\gamma \log\left(\frac{1}{\gamma}\right)\ . 
\end{equation}
We shall restrict our attention to $\gamma$ small enough that $\log (1/\gamma)>41+\log(81)$, which is equivalent to
\begin{align}
9 \sqrt{\gamma} < \sqrt{ 2 \gamma \calW\!\left( \frac{1}{2 \sqrt{\e} \,\gamma} \right)} .\label{eq:gamma_regime2}
\end{align}

 For any set $a \subset [n]$ and any square matrix $X$, let $X_a$ refer to the matrix whose entries outside $a\times a$ have been set to zero. 
 Given a vector $v$, we define the event $\Gamma_{v}$ by
 \begin{equation}\label{eq:definition:gamma_v}
 \Gamma_{v}:= \left\{ \|(X-\frac{\lambda}{\sqrt{n}} vv^{\dag})_{\supp(v)}\|_2\leq 2.1 \sqrt{n\gamma}\right\}\ .
 \end{equation}
 Under $\mathbb{P}(\cdot|v)$, the $\supp(v)\times \supp(v)$ submatrix of $X-\frac{\lambda}{\sqrt{n}} vv^{\dag}$ is distributed as a Wigner Matrix. By~\cite[Theorem 5.1]{BaiSilverstein10}, its spectral norm rescaled by $\sqrt{\gamma n}$ converges almost surely to $2$ when $n\to \infty$, and so $\mathbb{P}(\Gamma_v|v)=1+o(1)$ uniformly with respect to $v$. The event $\Gamma_v$ means that the noise added to the nonzero entries of $vv^\dagger$ is not uncharacteristiaclly large.

Next, we condition on the high probability events $\Gamma_v$ and define the conditional probability distribution
 \begin{equation}\label{eq:def_P'_sparse_PCA}
 	\mathbb{P}'(X)= \mathbb{E}_{v}\left[\mathbb{P}(X|v)\frac{\indicator{\Gamma_v}}{\mathbb{P}(\Gamma_v|v)} \right]\ .
 \end{equation}
 The conditional second moment decomposes as 
 \begin{eqnarray*} 
 	\Exp_{X\sim \mathbb{Q}}\left[\frac{\mathbb{P}'^2(X)}{\mathbb{Q}^2(X)}\right]&=& \Exp_{v,w}\left[\Exp_{X\sim \mathbb{Q}}  \left[ \frac{\mathbb{P}(X|v)\mathbb{P}(X|w)}{\mathbb{Q}^2(X)} \frac{\indicator{\Gamma_{v}}\indicator{\Gamma_{w}}}{\mathbb{P}(\Gamma_{v}|v)\mathbb{P}(\Gamma_{w}|w)}\right]\right] \\
 	&\leq  &(1+o(1))\mathbb{E}_{v,w}\left[\underbrace{\Exp_{X\sim \mathbb{Q}}\left[e^{\frac{\lambda}{2\sqrt{n}}<vv^{\dag}+ ww^{\dag},X>- \frac{\lambda^2n}{2}}\indicator{\Gamma_{v}}\right]}\right]\ , \\ 
 	&& \hspace{5cm}=(I)
 \end{eqnarray*}   
 where we used that $\mathbb{P}(\Gamma_v|v)=1+o(1)$. Introduce the notation $z=|\supp(v)\cap \supp(w)|$, $t_+= |\supp(v+w)|$, $t_{-}=|\supp(v-w)|$ and $t=t_+-t_{-}$, so that that  $z=t_++t_-$. Finally, we define the function
 \[
  g(t_+,t_-):= \frac{\lambda\left(t_+^2+t_-^2\right)}{\gamma^2\sqrt{n}}+  2.1 \sqrt{\frac{n}{\gamma}}(t_++t_-)\ . 
 \]
Under $\Gamma_v$, one has $\langle vv^{\dag},X_{\supp(v+w)}+ X_{\supp(v-w)}\rangle \leq g(t_+,t_-)$. To bound $(I)$, we first integrate with respect to entries of $X$ outside $\supp(v+w)\times \supp(v+w)\bigcup \supp(v-w)\times \supp(v-w)$. 
\begin{eqnarray*}
	(I)&=  &   \Exp_{X\sim \mathbb{Q}}\left[ \exp \left( \frac{\lambda}{\sqrt{n}}\langle vv^{\dag},X_{\supp(v+w)}+X_{\supp(v-w)} \rangle - \frac{\lambda^2 z^2}{2\gamma^2n} \right) \indicator{\Gamma_{v}}\right] \\
 	&\leq &  \Exp_{X\sim \mathbb{Q}}\left[ \exp \left( \frac{\lambda}{\sqrt{n}}\langle vv^{\dag},X_{\supp(v+w)}+X_{\supp(v-w)} \rangle- \frac{\lambda^2z^2}{2\gamma^2n} \right) \indicator{ \langle vv^{\dag},X_{\supp(v+w)}+X_{\supp(v-w)}\rangle \leq g(t_+,t_-)}\right] \\
 	& \stackrel{(a)}{\leq}  & \exp\!\left[\frac{\lambda^2t^2}{2\gamma^2n}-\frac{1}{2}\left(\frac{\lambda\sqrt{2(t_+^2 + t_-^2)}}{\gamma \sqrt{n}}-\frac{ g(t_+,t_-)\gamma}{\sqrt{2(t_+^2+ t_-^2)}} \right)_+^2\right] \\
 	&\leq & \exp\!\left[\frac{\eta^2 t^2}{2 n}-\frac{1}{2}\left(\frac{\eta \sqrt{z^2+ t^2}}{2\sqrt{n}}-2.1 \sqrt{n\gamma} \right)_+^2\right]\ ,
 \end{eqnarray*} 
 where we used  
 \[
 \mathbb{E}_{Y\sim \mathcal{N}(0,1)}[e^{aY}\indicator{Y\leq b}] \leq \e^{a^2/2-(a-b)_+^2/2} 
 \]
 and the fact that $ \langle vv^{\dag},X_{\supp(v+w)}+X_{\supp(v-w)}\rangle \sim \calN(0, 2(t_+^2+t_-^2)/\gamma^2)$ in $(a)$ and $\eta= \lambda/\gamma$ in the last line. When $v$ and $w$ are sampled independently according to the sparse PCA model, then $z$ follows a hypergeometric distribution with parameters $(n,\gamma n,\gamma n)$. Conditioned on $z$, $t$ is distributed as a sum of $z$ independent Rademacher random variables. Returning to the second moment, we arrive at 
 \begin{equation}\label{eq:upper_second_moment}
  \Exp_{X\sim \mathbb{Q}}\left[\frac{\mathbb{P}'^2(X)}{\mathbb{Q}^2(X)}\right] 
  \leq (1+o(1))\,\Exp_{z,t}\left[\exp\!\left[\frac{\eta^2 t^2}{2n} -\frac{1}{2}\left(\frac{\eta\sqrt{z^2+t^2} }{2 \sqrt{n}}- 2.1\sqrt{n\gamma}\right)_{+}^2\right] \right]\ .
 \end{equation} 
 In comparison to the second moment bound for the original distribution $\P(X)$, the bound for $\P'(X)$ involves a correction factor $\left(\lambda\sqrt{z^2+t^2}/2\gamma \sqrt{n} - 2.1\sqrt{n\gamma}\right)_{+},$ 
 which is most effective when $z$ is large. If instead $z \leq n/(2\eta^2)$, we make use of the bound~\eqref{eq:upper_small_z} proved for second moment bound:
 \begin{equation}
 	\Exp_t \!\left[  \e^{ \frac{ \eta^2 t^2}{2n} }|z\right]\leq O \left( 1+ \frac{\eta^2}{n/z - \eta^2} \right)  = O\left(1\right)\ .\label{eq:upper_2nd_small}
 \end{equation}

 For larger values of $z$, we rely on the fact that the hypergeometric distribution with parameters $(n,\gamma n,\gamma n)$ is stochastically dominated by the Binomial distribution with parameters $(\gamma n , \gamma/(1-\gamma))$.  
 It follows  from Chernoff's bound  that 
 \begin{eqnarray} \nonumber
 \Exp_{z,t}\left[e^{\frac{\eta t^2}{2 n}}\indicator{z\in (\frac{n }{2\eta^2}, \gamma n/3) }\right] &\stackrel{(a)}{\leq} 
  &\sum_{z= n/(2\eta^2)}^{\gamma n/3}\exp\!\left(\frac{\eta^2 z^2 }{2 n} - \gamma n h_{\gamma/(1-\gamma)}\left(\frac{z}{\gamma n}\right) \right) \\ \nonumber
  &\stackrel{(b)}{\leq} & \sum_{z= n/(2\eta^2)}^{\gamma n/3}\exp\!\left(z\left( \frac{\eta^2 z}{2 n } -  \log\left(\frac{z(1-\gamma)}{e\gamma^2 n}\right) \right)\right)\\ \nonumber  
  &\stackrel{(c)}{\leq} & \sum_{z= n/(2\eta^2)}^{\gamma n/3} \exp\!\left(z\left( \left(\frac{\lambda^2}{6\gamma}   - \log\frac{1-\gamma}{3e\gamma}
  \right)\vee \left(\frac{1}{4}+ \log\frac{2\e \lambda^2}{1-\gamma}\right)\right)\right)\\ \nonumber
  &\stackrel{(d)}{\leq} & \sum_{z= n/(2\eta^2)}^{\gamma n/3} \exp\!\left(z\left( \left(\frac{\lambda^2}{6\gamma}   - \log\frac{1-\gamma}{3e\gamma}
  \right)\vee \left( \log\frac{8\e^{5/4} \gamma \log(1/\gamma)}{1-\gamma}\right)\right)\right)\\
  & \overset{}{\leq} & \sum_{z= n/(2\eta^2)}^{\gamma n/3}\exp\!\left(- z  \epsilon \right) =o(1) \ , \label{eq:upper_2nd_medium}
 \end{eqnarray}
where the last line holds for some constant $\epsilon>0$ in view of \prettyref{eq:cond_lambda_small_gamma} and of the condition $\gamma<1/(81e^{41})$.
We used in $(a)$ that $t\leq z$, in $(b)$ that $h_{p_0}(p_1)\geq p_1\log(p_1/(\e p_0))$ in $(c)$ that the function $x\mapsto \eta^2 x /(2n)-\log(x)$ is decreasing on $(0,2n /\eta^2]$ and increasing on $[2n/\eta^2,\infty)$. and in $(d)$ that $\lambda^2 \le -4 \gamma \log \gamma$.

Finally, we turn to the case $z\in [\gamma n/3,\gamma n]$, for which we will rely on the correcting term in \eqref{eq:upper_second_moment}. 
 Assume that $\lambda\geq 9\sqrt{\gamma}$ without loss of generality, because otherwise by \prettyref{eq:gamma_regime2}, the second moment is bounded without any conditioning.
For $z$ larger than $\gamma n/3$, 
$$
	\frac{\lambda z }{\sqrt{2}\gamma \sqrt{n}} \ge 2.1\sqrt{n\gamma},
$$ 
and we can once again apply Chernoff's bound to obtain
\begin{eqnarray*} 
	\lefteqn{\Exp_{z,t}\left[\exp\!\left(\frac{\eta^2 t^2}{2 n} -\frac{1}{2}\left(\frac{\eta\sqrt{z^2+t^2} }{2 \sqrt{n}}- 2.1\sqrt{n\gamma}\right)_{+}^2\right)\indicator{z\in (\gamma n/3, \gamma n]} \right] }&&\\
	&\leq& \Exp_{z,t}\left[\exp\!\left(\frac{\eta^2 z^2}{2 n} -\frac{1}{2}\left(\frac{\eta z }{ \sqrt{2n}}- 2.1\sqrt{n\gamma}\right)_{+}^2\right)\indicator{z\in (\gamma n/3, \gamma n]} \right] \\
	&\leq & \sum_{z=\gamma n/3}^{\gamma n}  \exp\!\left( \frac{\eta^2 z^2}{4n} + \frac{2.1}{\sqrt{2}}\eta z\sqrt{\gamma }-  \gamma n h_{\gamma/(1-\gamma)} \left(\frac{z}{\gamma n} \right)\right) \\
	&\stackrel{(a)}{\leq }& \sum_{z=\gamma n/3}^{\gamma n} \exp\!\left( \frac{\eta^2z^2 }{4n} + \frac{2.1}{\sqrt{2}}\eta z\sqrt{\gamma }-  z\log\left(\frac{z(1-\gamma)}{e\gamma^2 n }\right)\right) \\
	& \leq & \sum_{z=\gamma n/3}^{\gamma n} \exp \left(z\left( \frac{\eta^2 \gamma}{4}    - \log\left(\frac{1}{\gamma} \right) + \frac{ 2.1 \eta\sqrt{\gamma}  }{\sqrt{2}}     + \log\left(\frac{3\e}{1-\gamma}\right ) \right)\right)\\
	&\leq & \sum_{z=\gamma n/3}^{\gamma n} \e^{-z \epsilon}= o(1) \ ,
 \end{eqnarray*} 
where we used $t \le z$ and the fact that the whole exponent is  monotone increasing in $t^2$ in the first line. The last inequality holds for some constant $\epsilon>0$, because of condition \prettyref{eq:cond_lambda_small_gamma} on $\lambda$ together with the inequality $\lambda \le 2 \sqrt{\gamma \log (1/\gamma)}$. In $(a)$, we applied   $h_{p_0}(p_1)\geq p_1\log(p_1/(\e p_0))$. Together with \eqref{eq:upper_second_moment}, \eqref{eq:upper_2nd_small}, and \eqref{eq:upper_2nd_medium}, we conclude that $\Exp_{X\sim \mathbb{Q}}\left[\frac{\mathbb{P}'^2(X)}{\mathbb{Q}^2(X)}\right] =O(1)$.

\paragraph{Conditional second moment bound for reconstruction}
In view of \prettyref{eq:secondmoment_KL} and a bounded conditional second moment, we 
have $D_{\mathrm{KL}}(\P' \|Q) = O(1)$. Also, we have $\|M\|_\ast=\|M\|_2=\frac{\lambda}{\sqrt{n}} \|v\|_2^2 = \lambda \sqrt{n}$.
Therefore, the conclusion for reconstruction follows by applying~\prettyref{thm:MMSE}
together with \prettyref{thm:conditional_KL}.

\subsection{Submatrix Localization}

In this section we prove~\prettyref{thm:submatrix}. 
Recall that $X = (\mu/\sqrt n)\left(Y - \allones/k \right) + W$, where $\planted : [n] \to [k]$ is a balanced partition, 
$Y_{i,j} = \indicator{\planted(i) = \planted(j)}$, and $W$ is Wigner.

\subsubsection{First moment upper bound for submatrix localization} 
 It follows from~\prettyref{eq:conditional-ratio} that the conditional log likelihood ratio reads
\begin{align*}
	\log \frac{\P(X | \sigma)}{\Q(X)} 
	= \frac{\mu}{2\sqrt n}\langle Y - \allones /k, X \rangle - \frac{\mu^2}{4n} \|Y - \allones /k \|^2_F.
	%
	%
\end{align*}
Therefore, maximizing $\log \P(X | \sigma)/\Q(X) $ over $\sigma$ is equivalent to computing
$$
	\max_{\sigma} \score(\sigma) := \max_{\sigma} \sum_{i < j} X_{i,j}\left( Y_{i,j} - 1/k  \right). 
$$

In both the planted model and the null model, $\score(\sigma)$ is the sum of independent, Gaussian random variables and is therefore itself Gaussian. Under $\Q$, 
%
\begin{align*}
	\score(\sigma) \sim \calN\left(0, \frac{n^2(k-1) }{2k^2} + O(n) \right).
\end{align*}
Under $\P$, denoting by $\sigma_0$ the planted partition, 
\begin{align*}
	\score(\sigma) & = \frac{\mu}{\sqrt{n}} \sum_{i<j} \left( \indicator{\planted(i)  = \planted(j)} - 1/k \right)  \left(  \indicator{\sigma(i)=\sigma(j)} - 1/k \right) +  \sum_{i < j} W_{i,j}\left(  \indicator{\sigma(i)=\sigma(j)} - 1/ k\right) \\
	&=  \frac{\mu}{\sqrt{n}} \sum_{i<j}  \indicator{\planted(i) = \planted(j),\, \sigma(i)=\sigma(j)} +  \sum_{i < j} W_{i,j}\left(  \indicator{\sigma(i)=\sigma(j)} - 1/k\right) -\frac{n^2}{2k^2} + O(n). 
\end{align*}
Hence the distribution of $\score(\sigma)$ depends on the overlap matrix $\omega$ between $\sigma$ and the planted partition $\sigma_0$. 
%
For a given $\omega$,  among all $\binom{n}{2}$ pairs $i,j$ with $i<j$, 
there are $ \norm{\omega}^2_F n^2/2k^2 + O(n)$ paris such that $\planted(i) = \planted(j)$ and $\sigma(i)=\sigma(j)$.
Therefore, 
\begin{align*}
	\score(\sigma) \sim \calN\left(\frac{\mu n^2 ( \norm{\omega}^2_F - 1) }{2k^2\sqrt n} +O(n) , \frac{n^2(k-1)}{2k^2} +O(n) \right).
\end{align*}
%


To prove that detection is possible, notice that in the planted model, $\max_\sigma \score(\sigma) \ge \score(\sigma_0)$.
Setting $\omega = \identity$, Gaussian tail bounds tell us that
\begin{align*}
	\P\left[\score(\sigma_0) > \frac{n^2\mu (k-1)}{2k^2\sqrt n} - O(n \sqrt{\log n} )\right] \le n^{-\Omega(1)}.
\end{align*}
In the null model,  taking the union bound over the $n^k$ ways to choose $\sigma$, we can bound the probability that \emph{any} partition is as good, according to $\score$, as the planted one, by
\begin{align*}
	\Q\left[\max_{\sigma}\score(\sigma) > \frac{n^2\mu (k-1)}{2k^2\sqrt n} - O( n \sqrt{\log n})\right] \le \exp \left( n\left(\log k - \frac{\mu^2(k-1)}{4k^2} + O(\sqrt{\log n/n}  ) \right) \right) .
\end{align*}
Thus the probability of this event is $\e^{-\Omega(n)}$ whenever
\[
	\mu^2 > \frac{4k^2\log k}{k-1},
\]
meaning that above this threshold we can distinguish the null and planted models with generalized likelihood testing.

To prove that reconstruction is possible, we compute in the planted model the probability that $\score(\sigma) > \score(\planted)$ given that $\sigma$ has $L_2$ overlap $z$ with the planted partition, and argue that this probability tends to zero whenever the overlap is small enough. Taking the union bound over every $\sigma$ with $L_2$ overlap at most $z$ gives
%
\begin{align*}
	\P\left[\max_{\norm{\omega(\sigma,\planted)}^2_F \le z}\score(\sigma) \ge \frac{n^2\mu (k-1)}{2k^2\sqrt n} - O(n \sqrt{\log n} )\right]
	%
	%
	&\le \exp \left(  n \left(\log k - \frac{\mu^2(k - z)^2}{k^2(k-1)} + O(\sqrt{\log n/n} ) \right) \right).
\end{align*}
By the assumption that $\mu^2 > 4k^2\log k/(k-1)$, it follows that there exists a fixed constant $\epsilon>0$ such that $\mu^2 (1-\epsilon)^2 >4k^2  \log k/(k-1)$. Hence, setting $z= 1+ (k-1)\epsilon$ in the last displayed equation, 
it yields that with probability at least $1-e^{-\Omega(n)}$, 
$$
\max_{\norm{\omega(\sigma,\planted)}^2_F \le z}\score(\sigma)  <  \frac{n^2\mu (k-1)}{2k^2\sqrt n} - O(n \sqrt{\log n} ), 
$$
and consequently $ \| w( \hat{\sigma}_{\rm ML}, \sigma_0 ) \|_F^2  \ge 1+ (k-1)\epsilon$. By \prettyref{lmm:overlap}, this further implies that the trace overlap satisfies $ T\left(\hat\sigma_{\rm ML},\sigma_0\right) \ge 1+ (k-1)\epsilon $. Moreover, construct an $n\times n$ matrix $\hat{M}$ so that $\hat{M}_{ij}= (\mu/\sqrt n) \indicator{\hat{\sigma}_{\rm ML} (i) =   \hat{\sigma}_{\rm ML} (j) }  $. Then $\| \hat{M} \|_F^2=O(n)$, and  
$$
\langle \hat{M}, M \rangle =  \frac{n \mu^2}{k^2} \left( \| w( \hat{\sigma}_{\rm ML}, \sigma_0 ) \|_F^2 -1 \right) + O(1),
$$
which is at least $ n \mu^2 (k-1) \epsilon /k^2 + O(1)$ with high probability. 
Thus, we can reconstruct the signal matrix $M$ and the planted partition $\planted$ better than chance.

\subsubsection{Second moment lower bound for submatrix localization} 
We first prove that when 
\begin{equation}
	\mu^2 < 
	\begin{cases}
	k^2 & k=2 \\
	\frac{2k^2\log(k-1)}{k-1} &  k \ge 3 \, ,
	\end{cases} 
	\label{eq: submatrix second moment}
\end{equation}
then the second moment is bounded. Applying~\prettyref{lmm:secondmomentGaussian},
$$
	\Exp_{X \sim \Q} \left( \frac{\P(X)} {\Q(X)} \right)^2  
	= \Exp_{\sigma, \tau} \exp\!\left( \frac{\mu^2}{2 n} \inner{Y - \allones/k}{ Y' - \allones/k } \right),
$$
where $Y_{ij} = \indicator{\sigma(i)=\sigma(j)}$ and $Y'_{ij} = \indicator{\tau(i)=\tau(j)}$. Recall that $\omega$ denotes the overlap matrix between partitions $\sigma$ and $\tau$. Then $\inner{Y}{Y'} = n^2 \| \omega\|_F^2/k^2$ and $\inner{Y}{\allones}=\inner{Y'}{\allones} = n^2/k$. It follows from the last displayed equation that 
\begin{equation}\label{eq:second_moment_submatrix}
	\Exp_{X \sim \Q} \left( \frac{\P(X)} {\Q(X)} \right)^2  
	= \Exp_{\sigma, \tau} \, \exp\!\left( \frac{\mu^2 n}{2 k^2 } ( \| \omega \|_F^2 -1 )\right) ,
\end{equation}
Lemmata \ref{lmm:laplacemethod}, \ref{lmm:A-N} and \ref{lmm:A-N_k_two} assure us that this expression is bounded by a constant so long as \prettyref{eq: submatrix second moment} holds. \\

For reconstruction, in view of \prettyref{eq:secondmoment_KL} and a bounded second moment,
$D_{\mathrm{KL}}(\P \|Q) = O(1)$. 
Apply \prettyref{thm:MMSE} so that for any estimator $\hat{M}$ with $\|\hat{M}\|_F^2 = O(n)$, we have  $\Exp[\inner{M}{\hat{M}}] = o(n)$. Recall that 
$\sigma_0$ is the planted partition and $ M = (\mu/\sqrt n)(Y - \allones/k) = (\mu/\sqrt n)UU^\dagger, $ where $U\in\R^{n\times k}$ with $U_{i s} = \indicator{\sigma_0(i) =s } - 1/k$. 
Then for any estimator $\hat{\sigma}$ of $\sigma_0$,  
by defining $\hat{U}$ such that $\hat{U}_{is} = \indicator{\hat{\sigma}(i)=s} -1/k$ and letting $\hat{M} = (\mu/\sqrt n)\hat{U}\hat{U}^\dagger,$ it follows that 
\begin{align*}
	\frac{1}{n^2}  \Exp \| U^\dagger \hat{U} \|_F^2  =  \frac{1}{\mu^2 n } \Exp \inner{M}{\hat{M}}  = o(1). 
\end{align*}
Finally, notice that by letting $\omega= \omega( \hat\sigma ,\sigma_0) $, 
$$
\| U^\dagger \hat{U} \|_F^2  = \frac{n^2}{k^2} \sum_{\ell, s} (\omega_{\ell, s} - 1/k)^2 = \frac{n^2}{k^2} \left(  \| \omega\|_F^2 -1 \right);
$$
thus $\Exp  \| \omega\|_F^2=1+o(1)$. 
Hence, we can neither reconstruct the signal matrix $M$ nor the planted partition $\sigma_0$.

\subsubsection{Conditional second moment lower bound for submatrix localization} \label{sec:conditional_submarix}

Notice that in the large $k$ asymptotic, the right hand side of \prettyref{eq: submatrix second moment} converges
to $2 k \log k$, while the first moment upper bound gives $4k \log k$. To match the first moment upper bound in the larger $k$ asymptotic, we apply a conditional second moment method. In the sequel we assume that
\begin{equation}
\label{eq:condition_mu_large_k}
 \mu^2 \leq 4k\log k-44 k\log^{3/4}(k) .
\end{equation}
 If $\log k <11^4$, then  $4k\log k - 44 k \log^{3/4} k$ is negative. Hence, we focus 
 on the setting $\log k \ge 11^4$.

For any partition $\sigma: [n] \to k$,  let $M=\frac{\mu}{\sqrt{n}}(Y_{\sigma}- \mathbb{J}/k) $
and
define the event $\Gamma_{\sigma}$  by 
\[
\Gamma_{\sigma}:= \left \{\sup_{1 \le \ell \le k } \left \| \left [X - M\right]_{\sigma^{-1}(\ell)} \right \|_2\leq 3 \sqrt{n/k} \right\},
\]
where $ \left [X -M \right]_{\sigma^{-1}(\ell )}$ denotes the $\sigma^{-1}( \ell )\times \sigma^{-1}( \ell )$ submatrix of $X-M$.
In other words, on $\Gamma_\sigma$, the spectral noms of submatrices $ \left [X -M \right]_{\sigma^{-1}(\ell)}$
for every $1 \le \ell \le k$ are all upper bounded by $3 \sqrt{n/k}$.
Since $ \left [X -M \right]_{\sigma^{-1}(\ell)}$ is distributed as a Wigner Matrix under $\mathbb{P}(.|\sigma)$,
by Gaussian concentration theorem and Slepian's inequality we have
 \[
  \mathbb{P} \left[ \left \| \left[X- M\right]_{\sigma^{-1}(\ell)} \right \|_2\geq 2\sqrt{n/k}+ t  \mid \sigma \right]\leq \e^{-t^2/4}\ , \forall 1 \le \ell \le k.
 \]
Since $k=o(n/\log(n))$, we have $\mathbb{P}(\Gamma_\sigma|\sigma)=1+o(1)$ uniformly with respect to  $\sigma$.\\

Next, we condition on the high probability events $\Gamma_\sigma$ by defining
 \[
  \mathbb{P}'(X)= \mathbb{E}_{\sigma}\left[\mathbb{P}(X|\sigma)\frac{\indicator{\Gamma_\sigma}}{\mathbb{P}(\Gamma_\sigma|\sigma)} \right],
 \]
It follows that the conditional second moment  satisfies 
 \begin{eqnarray*} 
  \Exp_{X\sim \mathbb{Q}}
  \left[ \left( \frac{\mathbb{P}'(X)}{\mathbb{Q}(X)} \right)^2\right]&=& \Exp_{\sigma,\tau}
  \left[\Exp_{X\sim \mathbb{Q} }  
  \left[ \frac{\mathbb{P}(X|\sigma)\mathbb{P}(X|  \tau)}{\mathbb{Q}^2(X)} \frac{ \indicator{\Gamma_{\sigma} }\indicator{\Gamma_{\tau}}} {\mathbb{P}( \Gamma_{\sigma} | \sigma)\mathbb{P}(\Gamma_{\tau} | \tau)}\right]\right] \\
   &= &(1+o(1))\mathbb{E}_{\sigma,\tau}\left[\underbrace{\Exp_{X\sim \mathbb{Q}}\left[e^{\frac{\mu}{2\sqrt{n}}\langle X , Y_{\sigma}+ Y_{\tau}-2 \mathbb{J}/k \rangle - \frac{\mu^2}{2n}\|Y_{\sigma}-\mathbb{J}/k\|_F^2}\indicator{\Gamma_{\sigma}}\right] } \right]\ ,
   \\ && \hspace{5cm}=(I)
 \end{eqnarray*}   
 where we used that $\mathbb{P}(\Gamma_v|v)=1+o(1)$.  
 Given $\sigma$ and $\tau$, define $\mathcal{A}_{\ell \ell'}= \sigma^{-1}(\ell )\cap\tau^{-1}(\ell')$ and the event 
\[
\overline{\Gamma}_{\ell \ell'}= 
\left\{
\sum_{i, j \in \mathcal{A}_{\ell\ell'}} X_{ij} \leq \frac{\mu}{\sqrt{n}} \left(1-\frac{1}{k} \right) |\mathcal{A}_{\ell\ell'}|^2+ 3|\mathcal{A}_{\ell \ell'}| \sqrt{n/k}
\right\}
\]
Obviously,  $\overline{\Gamma}_{\ell \ell'} \subset\Gamma_{\sigma}$ for all $\ell, \ell'\in [k]$. Equipped with this notation, we bound $(I)$ by the exponential moment of $k^2+1$  thresholded normal random variables:
\begin{eqnarray*} 
(I)
&\leq  & \mathbb{E}_{X\sim \mathbb{Q}}\left[ \exp \left( \frac{\mu}{2\sqrt{n}}\langle X , Y_{\sigma}+ Y_{\tau}-2 \mathbb{J}/k \rangle - \frac{\mu^2}{2n}\|Y_{\sigma}-\mathbb{J}/k\|_F^2 \right) \prod_{\ell,\ell'=1}^k\indicator{\overline{\Gamma}_{\ell \ell'}}\right]\\
&\leq & \exp\!\left( \frac{\mu^2}{2n} \langle Y_{\sigma}-\mathbb{J}/k, Y_{\tau}-\mathbb{J}/k\rangle \right) 
\prod_{\ell,\ell'=1}^k \exp\!\left( - \frac{1}{2} \left[
\mu\left(1-\frac{1}{k} \right)\frac{|\mathcal{A}_{\ell \ell'}|}{\sqrt{2n}}- 3\sqrt{\frac{n}{2k}} \right]_{+}^2 \right)
\end{eqnarray*} 
where we used  
$$
\mathbb{E}_{Z\sim \mathcal{N}(0,1)}[e^{aZ}\indicator{Z\leq b}] \leq \e^{a^2/2- (a-b)^2_+/2}.
$$ 
Recall that $\omega$ denotes the overlap matrix between $\sigma$ and $\tau$. 
Then $\langle Y_{\sigma},Y_{\tau}\rangle=n\|\omega\|_F^2/k^2$, $\langle Y_{\sigma},\mathbb{J}\rangle = \langle Y_{\sigma},\mathbb{J}\rangle =n^2/k$ and $|\mathcal{A}_{\ell \ell'}|= n\omega_{\ell\ell' }/k$. 
Define 
\begin{align*}
a_k:= & 3\log^{1/4}(k)  \\
b_k:= & 2/k + 6/a_k.
\end{align*}
It follows from the last displayed equation that 
\begin{eqnarray}\nonumber
 \frac{\log(I)}{n} &\leq &\frac{\mu^2}{2k^2}(\|\omega\|_F^2 -1)- \frac{\mu^2}{4k^2}\sum_{\ell,\ell'}\omega_{\ell\ell'}^2\left(1-\frac{1}{k} - 3\frac{\sqrt{k}}{\mu \omega_{\ell\ell'}} \right)_+^2  \label{eq:upper_logII} \\ \label{eq:upper_logI}
&\leq  & \frac{\mu^2}{4k^2}(1+ b_k)(\|\omega\|_F^2 -1) +  \frac{\mu^2}{4k^2}\sum_{\ell,\ell'} \left(\omega_{\ell\ell'}^2-\frac{1}{k^2} \right) \indicator{\omega_{\ell\ell'}\leq \sqrt{k} a_k/\mu}\ ,
\end{eqnarray}
where we used $(1-x)^2_+\geq \max((1-2x),0)$  and $b_k \le 1$ since  $\log k\geq 11^4$ in the second line. Unfortunately, the expression 
$$
\exp \left(\sum_{\ell,\ell'} \left(\omega_{\ell\ell'}^2-\frac{1}{k^2} \right) \indicator{\omega_{\ell\ell'}\leq \sqrt{k} a_k/\mu} \right)
$$ 
is not easy to integrate over $\omega$. This is why we shall bound it by a combination of the entropy of $\omega$ and a second degree polynomial.

\begin{lemma}
Let $t\in (0,1)$ be such that $t\leq 0.4$ and $t\geq \tfrac{3}{k}\vee (\tfrac{\e}{k})^{1/5}\vee \frac{1-1/k}{2[\log k-1]}\vee  \frac{2 \log k}{5 (k- 2 \log k +1 ) }$. Upon defining 
$c_t:= \frac{5t}{\log k-1} - \frac{2+ 5t }{k[\log k-1]}>0$, we have for any $x\in [0,1]$, 
\begin{equation}\label{eq:ineq:fun}
(x^2-\frac{1}{k^2})\indicator{x<t}\leq 5t(x-\frac{1}{k})(1-x) +c_t \left[x\log(x) + \frac{\log k}{k}\right]\ .
\end{equation} 
\end{lemma}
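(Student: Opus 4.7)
The plan is to prove the inequality by case analysis across $[0,1]$, split at the two natural break points $x=1/k$ and $x=t$ (note $1/k < t$ by hypothesis). A unifying observation is that all three functions involved---$x^2-1/k^2$, $(x-1/k)(1-x)$, and $x\log x + (\log k)/k$---vanish at $x=1/k$, so it is natural to work in the shifted variable $y = x-1/k$. The main analytic tool I would use is the tangent-line bound from convexity of $u\mapsto u\log u$, namely $x\log x + (\log k)/k \geq -(\log k -1)(x-1/k)$ for all $x\in[0,1]$. Combined with the identity $c_t(\log k-1) = 5t(1-1/k) - 2/k$, which is precisely how $c_t$ was defined, this yields the convenient lower bound $F(x) \geq (x-1/k)\bigl[\,2/k - 5t(x-1/k)\,\bigr]$ for the right-hand side $F(x) := 5t(x-1/k)(1-x) + c_t\bigl[x\log x + (\log k)/k\bigr]$ of the claimed inequality.

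For $x\in[t,1]$ the left-hand side vanishes, so I only need $F(x)\geq 0$. The tangent bound above gives this directly on $x-1/k \leq 2/(5tk)$; beyond that interval the tangent bound is too loose, and I would instead use a convexity-based secant bound for $u\log u$ on $[t,1]$ (the chord between $(t,t\log t)$ and $(1,0)$). After inserting this secant bound and the explicit form of $c_t$, the inequality $F(x)\geq 0$ reduces to an affine inequality in $x$ that holds on $[t,1]$ under the hypothesis $t \geq 2\log k/[5(k-2\log k+1)]$. For $x\in[1/k,t]$ I need $F(x) \geq x^2 - 1/k^2$; factoring $x^2-1/k^2 = (x-1/k)(x+1/k)$ and applying the tangent bound reduces the claim to a quadratic inequality in $(x-1/k)$ that is controlled on $[1/k,t]$ by the combination $t\leq 0.4$ and $t\geq 3/k$. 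For $x\in[0,1/k]$ the left-hand side is nonpositive, so it suffices to show $F(x)+1/k^2-x^2\geq 0$; the critical point is $x=0$, where after substituting the formula for $c_t$ the hypothesis $t \geq (1-1/k)/[2(\log k-1)]$ is exactly what forces nonnegativity, and the remaining interval is handled by monotonicity, since $F+1/k^2-x^2$ is a sum of one concave and one convex piece whose single critical point can be located explicitly.

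The main obstacle is the first case, $x\in[t,1]$, where the concave quadratic $5t(x-1/k)(1-x)$ and the convex term $c_t\cdot x\log x$ pull in opposite directions over a potentially long interval and where information about $x\log x$ has to be extracted quantitatively rather than qualitatively. Each of the four hypotheses on $t$ in the lemma appears to be calibrated for a distinct sub-case of the argument: $3/k$ makes $x=1/k$ an interior point of $[0,t]$ so that the factorization around $1/k$ is meaningful; $(\e/k)^{1/5}$ keeps the negative contribution from $x\log x$ controlled on $[t,1]$; $(1-1/k)/[2(\log k-1)]$ handles the left endpoint $x=0$; and $2\log k/[5(k-2\log k+1)]$ provides the slack needed in the affine bound on $[t,1]$. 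Checking that these constants are all simultaneously sharp enough, rather than redundant, is essentially the bulk of the remaining bookkeeping; none of the individual computations is deep, but keeping track of which hypothesis is being invoked where will be the delicate part.
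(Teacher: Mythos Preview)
Your overall plan is sensible and the identity $c_t(\log k-1)=5t(1-1/k)-2/k$ is indeed the key to why $c_t$ is defined as it is. However, there is a genuine gap in your treatment of the interval $[1/k,t]$. Write $y=x-1/k$. Your tangent bound gives $F(x)\ge y\bigl(2/k-5ty\bigr)$, while the target is $x^2-1/k^2=y(y+2/k)$. Dividing by $y>0$, the tangent-based inequality becomes $2/k-5ty\ge y+2/k$, i.e.\ $-(5t+1)y\ge 0$, which fails for every $y>0$. The reason is structural: the constant $c_t$ is chosen precisely so that the right-hand side matches $x^2-1/k^2$ to \emph{first order} at $x=1/k$ (equivalently, so that $g'(1/k)=0$ for $g(x):=F(x)-(x^2-1/k^2)\indicator{x<t}$). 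A first-order (tangent) bound on $x\log x$ therefore carries no slack past $1/k$; you need second-order information, and even the crude Taylor remainder $\tfrac{1}{2\xi}(x-1/k)^2$ with $\xi\in[1/k,t]$ is not strong enough, since it would require $c_t\ge 2t(5t+1)>10t^2\ge c_t$.

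The paper's argument avoids this difficulty by working directly with the derivative structure of $g$. One computes $g''(x)=c_t/x-10t-2\,\indicator{x<t}$, so $g'$ is increasing on $(0,c_t/(10t+2))$ and decreasing thereafter on each of $(c_t/(10t+2),t)$ and $(t,1]$; the hypotheses $t\ge (1-1/k)/[2(\log k-1)]$ and $t\ge 2\log k/[5(k-2\log k+1)]$ ensure $c_t\le 10t^2$ and $c_t/(10t+2)\ge 1/k$ respectively. Since $c_t$ was engineered so that $g'(1/k)=0$, this forces the global minimum of $g$ on $[0,1]$ to occur at one of $x=1/k$, $x=t_-$, or $x=1$, and one then checks each value directly (the bounds $t\le 0.4$, $t\ge 3/k$, and $t\ge (\e/k)^{1/5}$ enter in verifying $g(t_-)\ge 0$). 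Your attribution of the four hypotheses to the four sub-cases is accordingly somewhat off; in particular, $x=0$ needs no separate check at all, since $g$ is strictly decreasing on $(0,1/k)$ and $g(1/k)=0$.
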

\begin{proof}
 Indeed, let $g$ be defined  by $g(x)= 5t(x-\frac{1}{k})(1-x) +c_t \left[x\log(x) + \frac{\log k}{k}\right]- (x^2-\frac{1}{k^2})\indicator{x<t}$. For any $x\neq t$, 
$g'(x)= 5t(-2x +1+1/k) +c_t\log(xe) -2x\indicator{x<t}$ and $g''(x)=  \frac{c_t}{x} -10t-2\cdot\indicator{x<t}$. Since $c_t\leq 10t^2$,  $g'$ is increasing on $(0,c_t/[2+10t])$ and decreasing on $(c_t/[2+10t],t)$ and $(t,1]$. Besides, $c_t/[2+10t] \ge 1/k$ as this inequality reduces $t \ge \frac{2 \log k}{5 (k- 2 \log k +1 ) }.$
Since $c_t$ has been chosen in such a way that $g'(1/k)=0$, the minimum of $g$ is either achieved at $1/k$, $t_-$, or $1$.  We have $g(1/k)=0$ and $g(1)=c_t\log k/k>0$, and 
\[g(t_-)\geq 5t(t-1/k)(1-t)-c_t t\log(1/t)-t^2\geq t^2 - \frac{5t^2\log(1/t)}{\log k-1}\geq 0\ ,\]
where we used $(t-1/k)(1-t)\geq 2t/5$ and $t\geq (e/k)^{1/5}$.
We have proved \eqref{eq:ineq:fun}. 
\end{proof}

Since $\mu\leq 2\sqrt{k\log k}$, we have $\sqrt{k}a_k/\mu \ge \frac{3}{2} \log^{-1/4}(k)$. Assuming that $\mu \geq 2.5\sqrt{k}a_k$ and since $\log k\geq 11^4$, the assumptions of \eqref{eq:ineq:fun}
are satisfied  for $t=\sqrt{k}a_k/\mu$. Writing $c_k$ for $c_{\sqrt{k}a_k/\mu}$, it follows from \eqref{eq:upper_logI} and \eqref{eq:ineq:fun} that 
\begin{eqnarray}\nonumber
 \frac{\log(I)}{n} &\leq &
 \frac{\mu^2}{4k^2}(1+ b_k)(\|\omega\|_F^2 -1) +  \frac{5\mu a_k}{4k^{3/2}}\sum_{\ell,\ell'} \left(\omega_{\ell\ell'}-\frac{1}{k} \right)(1-\omega_{\ell\ell'}) +\frac{\mu^2c_k}{4k^2}\sum_{\ell,\ell'} \left[\omega_{\ell\ell'}\log(\omega_{\ell\ell'})+\frac{\log k}{k} \right]\\
 &\stackrel{(a)}{\leq} & \frac{\mu^2}{4k^2}(1+ b_k)(\|\omega\|_F^2 -1)-\frac{\mu^2c_k}{4k} \left[H(\omega) -\log k\right]\nonumber \\
 & \leq  & \frac{\mu^2}{4k^2}(1+ b_k)(\|\omega\|_F^2 -1)-\frac{5 \mu a_k}{4\sqrt{k}(\log k-1)} \left[H(\omega) -\log k\right] \nonumber\\
 &\leq & \frac{\mu^2}{4k^2}(1+ b_k)(\|\omega\|_F^2 -1)- \tfrac{15}{2}\log^{-1/4}(k) \left[H(\omega) -\log k\right]\ ,\nonumber
\end{eqnarray}
where $(a)$ follows because  $\omega$ is doubly stochastic so that
$\sum_{\ell,\ell'}(\omega_{\ell\ell'}-\frac{1}{k})(1-\omega_{\ell\ell'})=1-\|\omega\|_F^2 \le 0$;
the last inequality holds due to $\mu \leq 2\sqrt{k\log k}$. 
If  $\mu \leq 2.5\sqrt{k}a_k$,
we simply come back to  \eqref{eq:upper_logII} to ensure that  $\log(I) \leq n \frac{\mu^2}{2k^2}(\|\omega\|_F^2 -1)$. We arrive at
\begin{align}
\Exp_{X\sim \mathbb{Q}}
  \left[ \left( \frac{\mathbb{P}'(X)}{\mathbb{Q}(X)} \right)^2\right] \leq (1+o(1))\Exp_{\sigma,\tau}\exp \left[n \left( t_1(\|\omega\|_F^2 -1) -  t_2 \left( H(\omega) -\log k\right) \right)\right] \label{eq:submatrix_conditional_second_moment}
\end{align}
with $t_1=\frac{\mu^2}{2k^2}$ and $t_2=0$ when $\mu < 2.5\sqrt{k}a_k$ and $t_1= \frac{\mu^2}{4k^2}(1+ b_k)$  and  $t_2= 8\log^{-1/4}(k)$ when $\mu\geq 2.5\sqrt{k}a_k$. 

Hence, to prove the conditional second moment is bounded, it reduces to verifying the
right hand side of  \prettyref{eq:submatrix_conditional_second_moment} is bounded. 
 By assumption \eqref{eq:condition_mu_large_k} and $ \log k \ge 11^4$, it holds that 
$$
\mu^2 \le \frac{4k^2}{(k-1)(1+b_k)} \left( \log (k-1) - 8 \log (k-1) \log^{-1/4} (k) -1 \right)\ .
$$
This further implies that 
\[
 \frac{t_1}{1-t_2}\leq \frac{\log(k-1) -1/(1-t_2)}{k-1}\ .
\]
Therefore, by \prettyref{lmm:A-N}, we have that for all $\omega$,
$$
\left( H(\omega)-\log k\right)(1-t_2)   + t_1 \left( \|\omega\|_F^2 -1\right)  \  \le -\frac{1}{k-1}(\|\omega\|_F^2 -1).
$$
Hence, assumptions in \prettyref{lmm:laplacemethod} are satisfied with $\varphi(\omega)= t_1(\|\omega\|_F^2 -1) -  t_2 [ H(\omega) -\log k ]$
and $\delta= 1/(k-1)$, 
and it follows from  \prettyref{lmm:laplacemethod} that the right hand side of \prettyref{eq:submatrix_conditional_second_moment} is bounded.

\paragraph{Conditional second moment bound for reconstruction}
 In view of \prettyref{eq:secondmoment_KL} and a bounded conditional second moment, we 
have $D_{\mathrm{KL}}(\P' \|Q) = O(1)$. 
Also, we have that 
$$
\|M\|_2= \frac{\mu}{\sqrt{n}} \| Y - \allones /k \|_2 \le \frac{\mu}{\sqrt{n}} \left( \|Y\|_2 + \| \allones \|_2 /k \right)=2 \mu \sqrt{n}/k
$$
and since $M$ is of rank $k$, $\|M\|_\ast \le k \|M\|_2 \le 2 \mu \sqrt{n}$. 
Therefore, the conclusion for reconstruction follows by applying~\prettyref{thm:MMSE}
together with \prettyref{thm:conditional_KL}.


\subsection{Gaussian Mixture Clustering}
Finally, we turn to the Gaussian Mixture Clustering problem, where $\sigma : [n] \to [k]$ is a balanced partition chosen uniformly at random, 
$v_1,\ldots, v_k \iiddistr \calN(0, k/(k-1)\identity_{n,n})$, and we observe the matrix
\begin{align}
	X = \sqrt{\frac{\rho}{n}}\left(S - \frac{1}{k}\allones_{m,k}\right)V^\dagger + W, \label{eq:Gaussianmixtuer_model}
\end{align}
where $W$ has independent standard normal entries, $S$ is an $ m \times k$ matrix with $S_{i,t} = \indicator{\sigma(i) = t}$, and $V = [v_1, \dots ,v_k]$.

\subsubsection{First moment upper bound for Gaussian mixture clustering}

In this section, we derive an upper bound on the detection and reconstruction threshold via the first moment method. The testing procedure is again based on the generalized likelihood ratio $\sup_{v, \sigma} \log  \P(X | v, \sigma )/\Q(X)$, which from~\prettyref{eq:conditional-ratio} we write as
\begin{align*}
	\log \frac{\P(X \mid v,\sigma)}{\Q(X)} 
	&=\sqrt{ \frac{\rho}{ n} } \langle X, (S - \allones_{m,k}/k)V^\dagger \rangle - \frac{\rho}{2n}\norm{(S - \allones_{m,k}/k)V^\dagger}_F^2 \\ 
	& = \sqrt{ \frac{\rho}{ n} }\sum_{s=1}^k \inner{\sum_{i: \sigma(i)=s } x_i }{ v_s - \overline{v} } - \frac{\rho m}{2nk} \sum_{s=1}^k \|v_\ell -\overline{v} \|_2^2.
\end{align*}
%
When $\sigma$ is fixed, we can optimize in $v$ by setting the $s$th cluster center to
$$
	v_s -\overline{v} = \frac{ k \sqrt{n} }{m \sqrt{\rho} } \sum_{i: \sigma(i)=s } x_i , 
$$ 
the rescaled center of the data points $x_i$ which have been assigned to cluster $s$ according to $\sigma$. Up to multiplicative constants, then, the generalized likelihood ratio test is equivalent to 
the test based on the statistic
\begin{align} \label{eq:clustering_test_statistic}
	\max_\sigma \score(\sigma) := \max_\sigma \frac{k}{m} \sum_s \norm{\sum_{\sigma(i) = s} x_i}^2.
\end{align}
In the null model, for any fixed $\sigma$,
\begin{align*}
	\score(\sigma) \sim \chi^2_{nk},
\end{align*}
where $\chi^2_{nk}$ is the central chi-squared distribution with $nk$ degrees of freedom.

In the planted model, let $v^0$ and $\sigma_0$ denote the planted vectors and partition respectively, and for an arbitrary partition $\sigma$, let $\omega$ once again be the overlap matrix between $\sigma$ and $\sigma_0$. 
%
For any $1 \le \ell \le k$,
\begin{align*}
	\sum_{\sigma(i) = s} x_i \sim \calN\left(\frac{m\sqrt{\rho}}{k\sqrt n}\sum_t \omega_{s,t} (v^0_t - \overline v^0), \frac{m}{k}\identity\right).
\end{align*}
For $y\sim\calN(\mu,\identity_d)$, let $\chi^2_d(\| \mu\|^2)$ denote the distribution of $\|y\|^2$, which is known as non-central chi-square distribution with $d$ degrees of freedom and non-centrality $\|\mu\|^2$.  
In this notation, $\score(\sigma)$ is distributed in $\P$ as a non-central chi-squared  random variable with $nk$ degrees of freedom and noncentrality
\begin{align}
	\frac{\alpha \rho}{k}\sum_{\ell=1}^k  \norm{\sum_s \omega_{\ell,s} (v^0_s - \overline v^0)}^2 
	&= \frac{\alpha\rho}{k}\sum_{\ell=1}^k  \sum_{s,t}\omega_{\ell,s}\omega_{\ell,t}\langle v^0_s - \overline v^0, v^0_t - \overline v^0 \rangle  \nonumber \\
	&= \frac{n \alpha \rho}{k-1}\left(\norm{\omega}^2_F - 1\right) + O(\sqrt{n\log n}). \label{eq:signal_gaussian}
\end{align}
To obtain the last line, note that the $v^0_\ell$ are Gaussian with zero mean and variance $k/(k-1)$ in each coordinate, making $\langle v^0_s - \overline v^0, v^0_t - \overline v^0 \rangle = - n/(k-1) + O(\sqrt{n\log n})$ for $s \neq t$ and $\|v^0_s - \overline v^0\|^2 =  n + O(\sqrt{n\log n})$ with high probability. 

We will need the following tail bounds for non-central chi-squared distributions~\cite[Lemma 8.1]{birge2001}: for $t>0$, 
\begin{align}
	\Pr\left[\chi^2_d\left(\norm{\mu}^2\right) < d + \norm{\mu}^2 - 2\sqrt{\left(d + 2\norm{\mu}^2\right)t}\right] < \e^{-t} \label{eq:chi_lower} \\
	\Pr\left[\chi^2_d\left(\norm{\mu}^2\right) > d + \norm{\mu}^2 + 2\sqrt{\left(d + 2\norm{\mu}^2\right)t} + 2t\right] < \e^{-t} \label{eq:chi_upper}
\end{align}
Notice that we can obtain central chi-square tail bounds by setting $\norm{\mu}^2 = 0$.

To derive the first moment bound for detectability, notice that in the planted model, when $\sigma = \sigma_0$ up to a permutation of cluster indices, $\norm{\omega}^2_F = k$ and
\begin{align*}
	\score(\sigma_0) \sim \chi^2_{nk}\left(\alpha n\rho + O(\sqrt{n\log n})\right).
\end{align*}
Setting $t=\log n$ in~\prettyref{eq:chi_lower}, we know that with high probability $\score(\sigma_0) > nk + n\alpha \rho - O(\sqrt{n\log n})$.  

In the null model, by the union bound and~\prettyref{eq:chi_upper}, letting  $t= (1+\epsilon) m \log k$ for an arbitrarily small constant $\epsilon>0$, 
\begin{align*}
	\Q\left[\max_\sigma \score(\sigma) > nk + 2\sqrt{nkt} + 2t \right] 
	%
	%
	= k^m \Pr\left[\chi^2_{nk} > nk + 2\sqrt{nkt} + 2t\right] = \e^{-\Omega(m) },
\end{align*}
%
%
%
Hence, with high probability, in the null model, 
$$
	\max_\sigma \score(\sigma) \le  nk+ 2\sqrt{(1+\epsilon) mn  k \log k} + 2 (1+\epsilon) m \log k.
$$
By the assumption that $ \rho> \rho^{\up}$, \ie, $ \rho \sqrt{\alpha} > 2 \sqrt{k \log k} + 2 \sqrt{\alpha} \log k$, it follows that for sufficiently large $n$, 
$$
	n \alpha    \rho - O \left(  \sqrt{ n \log n} \right ) \ge 2 (1+\epsilon)    \left( \sqrt{ m n k \log k} + m \log k \right), 
$$
and consequently, with high probability, $ \max_{\sigma} \score(\sigma) > nk + n\alpha \rho - O(\sqrt{n\log n})$ under $\P$, and $\max_{\sigma} \score(\sigma) < nk + n\alpha \rho - O(\sqrt{n\log n})$ under $\Q$. 
%

To show reconstruction is possible above this bound, let $\sigma$ have overlap matrix $\omega$ with $\sigma_0$ and 
set $\|\omega\|^2_F = \theta$. We will show that, when $\theta$ is sufficiently small, there are with high probability no such partition with likelihood as high as the planted one. In the planted model,
\begin{align*}
	\score(\sigma) \sim \chi^2_{nk}\left( n\alpha\rho\frac{\theta - 1}{k-1} + O(\sqrt{n\log n})\right).
\end{align*}
Taking the union bound over all partitions $\sigma$ which have $L_2$ overlap at most $1 + (k-1)\epsilon$ with the planted one---of which there are no more than $k^m$---and invoking~\prettyref{eq:chi_upper} with $t= (1+\epsilon) m \log k$, we know that with high probability, 
\begin{align*}
	\max_{\sigma: \norm{\omega(\sigma,\sigma_0)}^2_F \le   1+(k-1) \epsilon}  \score(\sigma)  &  \le nk +  n\alpha\rho\epsilon+  2 \sqrt{ (1+\epsilon) \left(nk + 2 n\alpha\rho \epsilon \right)m \log k } \\
	&\qquad +  2(1+\epsilon) m \log k + O(\sqrt{n \log n} ),
\end{align*}
By the assumption that  $ \rho \sqrt{\alpha} > 2 \sqrt{k \log k} + 2 \sqrt{\alpha} \log k$, it follows that for sufficiently large $n$, 
$$
	n \alpha    \rho (1-\epsilon) - O \left(  \sqrt{ n \log n} \right ) \ge 2 (1+\epsilon)    \left( \sqrt{ (nk + 2 n \alpha \rho \epsilon ) m  \log k} + m \log k \right), 
$$
 and consequently, with high probability, 
 $$
 	\max_{\sigma: \norm{\omega(\sigma,\sigma_0)}^2_F \le 1+(k-1) \epsilon}  \score(\sigma) < \score(\sigma_0).
 $$ 
 Let $\hat{\sigma}_{\rm ML}=\arg \max \score(\sigma) $ denote the maximum likelihood estimator of $\sigma_0$. Then with high probability, $\norm{\omega(\hat{\sigma}_{\rm ML},\sigma_0)}^2_F \ge 1+ (k-1) \epsilon$, which further implies that the trace overlap satisfies $ T\left(\hat{\sigma}_{\rm ML}, \sigma_0\right)  \ge 1+ (k-1)\epsilon $ in view of \prettyref{lmm:overlap}.

Finally, we argue that above the first moment bound, one can construct an estimator $\hat{M}$ of $M$ such that $\Exp[ \|\hat{M}\|_F^2] = O(n^2)$ and $\Exp[\langle M, \hat{M} \rangle] =\Omega(n)$. Intuitively, if we can estimate the planted partition better than chance, then we should be able to construct an estimator of the signal matrix which out-performs the trivial one. Our proof uses the sample splitting method. Thinking of the rows of $X$ as noisy observations of the cluster centers contained in the rows of $M$, we will project each observation into two orthogonal subspaces, use the first of these projections to build an estimator $\hat \sigma$ of the planted partition $\sigma_0$, and finally combine $\hat \sigma$ with the \emph{second} projection to estimate the cluster centers. This technique gains us a subtle and important independence property: the partition we estimate based on the first projection is independent of the noise in the second.

Let us proceed. For $\delta \in (0,1)$ to be optimized later, let $n_1=(1-\delta) n \in \naturals$ and denote by $X_1 \in \reals^{m \times n_1}$ and $X_2 \in \reals^{m \times (n-n_1)}$ the restrictions of the data matrix $X$ to its first $n_1$ and final $n - n_1$ columns respectively; define $M_1$ and $M_2$ analogously. We first reconstruct $M_1$ from $X_1$ with parameters $\rho' = (\rho/n) n_1 = (1-\delta) \rho$ and $\alpha' = m/n_1= \alpha/(1-\delta)$. Notice that our assumption $ \rho \sqrt{\alpha} > 2 \sqrt{k \log k} + 2 \sqrt{\alpha} \log k$ means that we can choose $\delta$ sufficiently small to ensure $\rho' \sqrt{\alpha'} >  2 \sqrt{k \log k} + 2 \sqrt{\alpha'} \log k$. Let $\hat{\sigma} $ denote the ML estimator of the planted partition based only on the data $X_1$. We have already shown that  $\norm{\omega(\hat{\sigma},\sigma_0)}^2_F \ge 1+ (k-1) \epsilon$ with high probability. 

Now we use $\hat{\sigma}$ to construct an estimator of $M_2$. Specifically, let $\hat S$ be the $m \times k$ indicator matrix for $\hat{\sigma}$, $\hat{S}_{is} = \indicator{\hat{\sigma} (i) =s }$, and define $\hat{M}_2 = (k/m)\hat{S} \hat{S}^\dagger X_2$. Then
$$
	\Exp \inner{M_2}{\hat{M}_2} 
	= \frac{k}{m} \Exp \inner{M_2}{ \hat{S} \hat{S}^\dagger (M_2 + W_2) }
	\overset{(a)}{=} \frac{k}{m} \Exp \inner{M_2}{ \hat{S} \hat{S}^\dagger M_2} 
	=\frac{k}{m} \Exp \| \hat{S}^\dagger M_2\|_F^2,
$$ 
where $(a)$ follows because the noise $W_2$ is independent of $M_2$ and $\hat{S}$ and $\Exp W_2=0.$ Furthermore, 
$$
	\frac{k}{m} \Exp \| \hat{S}^\dagger M_2\|_F^2 = \frac{k}{m} \frac{m^2}{k^2} \frac{\rho}{n} \sum_{\ell=1}^k \left\| \sum_s \omega_{\ell, s} (u_s^0 - \bar{u}^0) \right\|^2,
$$
where $u_s^0$ is the restriction of $v_s^0$ to the last $\delta n$ coordinates, $\bar{u}^0$ is the restriction of $\bar{v}^0$ to the last $\delta n$ coordinates, and $\omega_{\ell, s} = \omega(\hat{\sigma},\sigma_0).$ In view of \prettyref{eq:signal_gaussian}, it follows from the last two displayed equations that $\Exp \inner{M_2}{\hat{M}_2} =\Omega(n)$.
One can additionally verify that that $\Exp \| \hat{M}_2\|_F^2 = O(n^2).$
Finally letting $\hat{M} = [ \mathbf{0}_{m, n_1}, \hat{M}_2]$ be the estimator, \ie, concatenating $\hat M_2$  on the left with $n_1$ zero columns, we have shown that $\Exp \|\hat{M}\|_F^2 = O(n^2)$ and $\Exp \langle M, \hat{M} \rangle =\Omega(n)$. 
 
%
%

\subsubsection{Second moment lower bound for Gaussian mixture clustering}
\label{sec:secondmoment_clustering}

We first show that if  $\rho < \rho^{\low}$, 
\ie, $\alpha\rho^2 < 2(k-1)\log(k-1),$
then the second moment is bounded. 
Recall that $S$ is the $ m \times k$ indicator matrix for a partition $\sigma$ with $S_{i,t} = \indicator{\sigma(i) = t}$, and $V = [v_1, \ldots ,v_k]$. Let $\tau$ be an independent copy of $\sigma$, with indicator matrix $T_{i,t} = \indicator{\tau(i) = t}$ and $U=[u_1, \ldots, u_k]$ be an independent copy of $V$. Applying Lemma 1,
\begin{align*}
	\Exp_{X\sim Q}\left( \frac{\P(X)}{\Q(X)} \right)^2 
	&= \Exp_{\sigma,\tau}\Exp_{v,u} \exp\!\left(\frac{\rho}{n}\left\langle (S - \allones_{m,k}/k)V^{\dagger},(T - \allones_{m,k}/k)U^{\dagger} \right\rangle\right) \\
	&= \Exp_{\sigma,\tau}\Exp_{v,u} \exp\!\left(\frac{\alpha \rho}{k}\langle U,V(\omega - \allones/k) \rangle \right),
\end{align*}
where $\omega$ is the overlap matrix between $\sigma$ and $\tau$. The last equality follows from $S^\dagger T = (m/k)\omega$ and $S^\dagger \allones_{m,k} = (m/k) \allones_{k,k} = \allones_{m,k}^\dagger T$.
Let $\tilde V = \sqrt{(k-1)/k}\,V$ and define $\tilde U$ analogously, so that these two matrices now contain i.i.d standard Gaussian entries.
Evaluating the moment generating function for the Gaussian random matrix $\tilde U$ and invoking the standard linear algebra result that $\|AB\|_F \le \|A\|_2 \|B\|_F$, it follows that
\begin{align*}
	\Exp_{v,u} \exp\!\left(\frac{\alpha \rho}{k}\langle U,V(\omega - \allones/k) \right)
	&= \Exp_{v,u} \exp\!\left(  \frac{\alpha\rho}{k-1}   \langle \tilde{U}, \tilde{V} (\omega - \allones/k ) \rangle \right) \\
	&= \Exp_{v} \exp\!\left(\frac{\alpha^2\rho^2}{2(k-1)^2}\left\|\tilde V (\omega - \allones/k)\right\|^2_{F}\right) \\
	&\le \Exp_{v} \exp\!\left(\frac{\alpha^2\rho^2}{2(k-1)^2} \| \tilde V \|_2^2 \norm{(\omega - \allones/k)}^2_{F}\right).
\end{align*}
In view of~\cite[Corollary 5.35]{vershynin2010nonasym}, 
$\tilde V$ is a `tall' Gaussian random matrix with spectral norm tightly concentrated:
\[
	\Pr\left[\|\tilde{V} \|_2 \le \sqrt{n} + \sqrt{k} + \epsilon \right] \le \,\e^{-\epsilon^2/2}.
\]
Since $k$ is assumed to be fixed constant, with high probability $\|\tilde{V} \|_2 \le  \sqrt{(1+\epsilon)n}$ for an arbitrarily small constant $\epsilon>0$. 
Notice that if this event does not hold, the second moment becomes unbounded. 
We therefore will compute the second moment conditioned on the high probability event that $\| \tilde V \|$ is not abnormally large.
As we discussed in \S3.2, we define the event 
 $$
 \calF=\{ V: \|\tilde{V} \|_2 \le  \sqrt{(1+\epsilon)n}  \},
 $$
 and conditional distributions $\P'(v, \sigma) = \P (v, \sigma) \indicator{\calF}  / \P(\calF)$ and $\P'(X) =\Exp_{v,\sigma \sim \P'} [\P(X |v, \sigma) ].$
 Notice that $\prob{\calF} \to 1.$
Then the conditional second moment satisfies 
 \begin{align*}
 \Exp_{X\sim Q}\left( \frac{\P'(X)}{\Q(X)} \right)^2  &\le \frac{1}{\P^2(\calF)}  
 \Exp_{\sigma,\tau} \Exp_{v,u} \exp\!\left(\frac{\alpha  \rho}{k}\langle U,V(\omega - \allones/k) \right) \indicator{ U \in \calF} \indicator {V \in \calF} \\
 & \le  \frac{1}{\P^2(\calF)} \Exp_{\sigma,\tau} \Exp_{v,u} \exp\!\left(\frac{\alpha  \rho}{k}\langle U,V(\omega - \allones/k) \right)  \indicator {V \in \calF} \\
 & \le   \frac{1}{\P^2(\calF)}  \Exp_{\sigma,\tau}  \Exp_{v} \exp\!\left(\frac{\alpha^2\rho^2}{2(k-1)^2} \| \tilde V \|_2^2 \norm{(\omega - \allones/k)}^2_{F}\right) \indicator {V \in \calF} \\
 & \le   \frac{1}{\P^2(\calF)}  \Exp_{\sigma,\tau} \exp\!\left( (1+\epsilon) \frac{\alpha^2\rho^2 n}{2(k-1)^2} \| \omega - \allones/k)\|^2_{F} \right).
 \end{align*}

%

As in the submatrix localization problem, it remains  to show that 
\begin{align} \label{eq: clustering_second_moment_laplace}
	\Exp_{\sigma,\tau}  \exp\!\left( (1+\epsilon) \frac{\alpha^2\rho^2 n}{2(k-1)^2} \| \omega - \allones/k)\|^2_{F} \right)
\end{align}
is bounded by a constant for a sufficiently small $\epsilon$. 
This is guaranteed if
$\alpha \rho^2 <1$  for $k=2$ and $\alpha\rho^2 < 2(k-1)\log(k-1)$ for $k>2$, 
in view of Lemmata \ref{lmm:laplacemethod}, \ref{lmm:A-N} and \ref{lmm:A-N_k_two}.
\\

For reconstruction,
notice that the conditioned planted model $\P'$ is still an additive Gaussian model. 
Let $\P'(M |X)$ denote the posterior distribution of $M$ given $X$ under $\P'$, \ie, $\P'(M|X)= \P( X|M) \P'(M)/ \P'(X)$. 
Let $E'(X) =  \Exp_{M \sim \P'(M|X) } [M]$ denote the posterior mean under $\P'$. 
Applying~\prettyref{thm:MMSE} with $\P'$ and $\Q$, we obtain 
$$
\lim_{n\to\infty} \frac{1}{n}  \Exp_{M, X \sim \P' } \left \|  M -  E'(X) \right \|_F^2 
= \lim_{n\to\infty} \frac{1}{n} \Exp_{M \sim \P' } \left \|  M \right \|_F^2 
= \lim_{n\to\infty} \frac{1}{n \P(\calF) } \Exp_{M \sim \P }  \left[ \left \|  M  \right \|_F^2 \indicator{\calF} \right],
$$
where  the last equality follows from the definition of $\P'.$
Let $E(X) =  \Exp_{M \sim \P(M|X) } [M]$ denote the posterior mean under $\P$.
Then
\begin{align*}
  \Exp_{M, X \sim \P' } \left \|  M -  E'(X) \right \|_F^2  & \le     \Exp_{M, X \sim \P' } \left \|  M -  E(X) \right \|_F^2   \\
  & = \frac{1}{\P(\calF)}  \Exp_{M, X \sim \P } \left[  \left \|  M -  E(X)  \right \|_F^2    \indicator{\calF} \right]\\ 
  & \le \frac{1}{\P(\calF)}  \Exp_{M, X \sim \P } \left \|  M -  E(X)  \right \|_F^2 ,
\end{align*}
where the first inequality holds because $E'(X)$ minimizes the  mean squared error under $\P'$;
the second equality holds by the definition of $\P'$. 
Combining the last two displayed equations yields that 
\begin{align}
\liminf_{n\to\infty} \frac{1}{n}  \Exp_{M, X \sim \P} \left \|  M -  E(X) \right \|_F^2  \ge  
\lim_{n\to\infty} \frac{1}{n  } \Exp_{M \sim \P }  \left[ \left \|  M  \right \|_F^2 \indicator{\calF} \right].
\label{eq:gaussian_clustering_conditional_reconstruction}
\end{align}
Since 
$$
\|M\|_F = \frac{\rho}{n} \| (S- \allones_{m,k}/k) V \|_F \le \frac{\rho}{n} \|S-\allones_{m,k}\|_F \|V\|_F = O( \|V\|_F),
$$
it follows that 
$$
 \Exp_{M \sim \P } \left[ \left \|  M  \right \|_F^4 \right] = O\left( \Exp_{V} [ \|V\|_F^4] \right) = O(n^2).
$$
Hence,
$$
\Exp_{M \sim \P } \left[ \left \|  M  \right \|_F^2 \indicator{\calF^c} \right]
\le \sqrt{ \Exp_{M \sim \P } \left[ \left \|  M  \right \|_F^4 \right]  \P(\calF^c) }
= o\left( \sqrt{ \Exp_{M \sim \P } \left[ \left \|  M  \right \|_F^4 \right] } \right)
=o(n).
$$
Combing the last displayed equation with \prettyref{eq:gaussian_clustering_conditional_reconstruction}
gives that 
$$
\liminf_{n\to\infty} \frac{1}{n}  \Exp_{M, X \sim \P} \left \|  M -  E(X) \right \|_F^2  \ge  
\lim_{n\to\infty} \frac{1}{n  } \Exp_{M \sim \P }  \left[ \left \|  M  \right \|_F^2 \right].
$$
Since $E(X)$ minimizes the mean squared error under $\P$, $ \Exp_{M, X \sim \P} \left \|  M -  E(X) \right \|_F^2
\le \Exp_{M \sim \P} \left \| M  \right \|_F^2$,
it follows that 
$$
\lim_{n\to\infty} \frac{1}{n}  \Exp_{M, X \sim \P} \left \|  M -  E(X) \right \|_F^2 =
\lim_{n\to\infty} \frac{1}{n  } \Exp_{M \sim \P }  \left[ \left \|  M  \right \|_F^2 \right],
$$
and thus $ \lim_{n\to \infty} (1/n) \Exp_{X \sim \P} \| E(X) \|_F^2 = 0$. 
By \prettyref{eq:overlap_general} in \prettyref{thm:MMSE}, we conclude that 
for any estimator $\hat{M}$ with $\|\hat{M}\|_F^2 = O(n)$,  $\Exp [\inner{M}{\hat{M}} ] = o(n)$. 


\subsection{Proof of Theorem~\ref{thm:MMSE}}
We give the proof for i.i.d.\ Gaussian noise, using a type of interpolation argument where we vary the signal to noise ratio; the proof for Wigner noise is identical.  Assume that $X(\beta) = \sqrt{\beta} M+W$ 
in the planted model and $X=W$ in the null model, where $\beta \in [0,1]$ is a signal-to-noise ratio parameter (analogous to an inverse temperature) and $W_{ij} \iiddistr \calN(0, 1)$ for all $i, j$. 

First recall that the Bayes-optimal estimator minimizing the mean squared error is the expectation of the posterior distribution, 
\[
	\hat{M}_{\mathrm{MMSE}} (X) = \expect{ M | X } \, ,
\]
so that the (rescaled) minimum mean squared error is given by
\begin{equation}
\label{eq:mmse-def}
	\MMSE(\beta) = \frac{1}{n} \,\Exp \| M - \expect{M|X} \|_F^2 \, .
\end{equation}
We will start by proving that, for all $\beta \in [0, 1]$, the MMSE tends to that of the trivial estimator $\hat{M} = 0$, 
\begin{align}
	\limsup_{n \to \infty} \MMSE (\beta)
	&=   \beta \lim_{n\to \infty}  \frac{ 1}{n} \,\Exp \|M\|_F^2  . \label{eq:mmse_formula}
\end{align}
Let us compute the mutual information $I(\beta)$ between $M$ and $X$: 
\begin{align}
	I(\beta) 
	& = \Exp_{M, X} \, \log \frac{\P(X|M)}{\P(X)}
	\nonumber \\
	& =  \Exp_{X} \, \log \frac{\Q(X)}{\P(X)} +  \Exp_{M, X} \, \log  \frac{ \P(X|M)}{ \Q(X) } 
	\nonumber \\
	& = - D_{\rm KL} (\P \| \Q) +  \Exp_{M, X} \left[ \sqrt{\beta } \inner{M}{X} - \frac{ \beta \|M\|_F^2}{2} \right] 
	\nonumber \\
	& = - D_{\rm KL} (\P \| \Q) + \frac{\beta }{2}  \Exp_{M} \, \|M \|_F^2 \, .
	\label{eq:MI_formula0}
\end{align}
By assumption,  we have $D_{\rm KL} (\P \| \Q) = o(n)$.
 This holds for $\beta=1$; by the data processing inequality for KL divergence~\cite[Theorem 2.2]{PW-it}, this holds for all $\beta < 1$ as well.  Thus~\prettyref{eq:MI_formula0} becomes
\begin{align}
	\lim_{n \to \infty} \frac{1}{n} I(\beta ) = \frac{\beta }{2} \lim_{n\to \infty}  \frac{ 1}{n} \Exp\, \|M\|_F^2 \, .
	\label{eq:MI_formula}
\end{align}

Next we compute the MMSE.  Recall the I-MMSE formula~\cite{GuoShamaiVerdu05}, 
\begin{equation}
\frac{\diff I(\beta)}{\diff \beta} = \frac{n}{2} \MMSE(\beta) \, ,
\label{eq:i-mmse}
\end{equation}
which can also be viewed as a classic formula in thermodynamics.  Note that the MMSE is by definition bounded above by the squared error of the trivial estimator $\hat{M} = 0$, so that for all $\beta$ we have
\begin{equation}
 \MMSE(\beta) \le \frac{1}{n} \,\Exp \, \| M \|_F^2 \, .
 \label{eq:trivial}
\end{equation}
Combining these we have
\begin{align*}
	\frac{1}{2} \lim_{n \to \infty} \frac{1}{n} \,\Exp\!\left[ \| M \|_F^2 \right]
	& \overset{(a)}{=} \lim_{n \to \infty} \frac{1}{n} I(1)  \\
	& \overset{(b)}{=} \frac{1}{2}  \lim_{n \to \infty} \int_{0}^{1}  \MMSE(\beta) \,\diff \beta \\
	& \overset{(c)}{ \le} \frac{1}{2} \int_{0}^{1}  \limsup_{n\to \infty}   \MMSE(\beta) \,\diff \beta \\
	& \overset{(d)}{\le}  \frac{1}{2} \int_{0}^{1}   \lim_{n\to \infty}  \frac{1}{n} \,\Exp \, \| M \|_F^2 \,\diff t \\
	& = \frac{1}{2}  \lim_{n \to \infty} \frac{1}{n} \,\Exp \, \| M \|_F^2 \, , 
\end{align*}
where $(a)$ and $(b)$ hold due to~\prettyref{eq:MI_formula} and~\prettyref{eq:i-mmse}, $(c)$ follows from the Fatou lemma, and $(d)$ follows from~\prettyref{eq:trivial}.  Since we began and ended with the same expression, these inequalities must all be equalities.  In particular, since $(c)$ holds with equality, we have
\begin{equation}
	\limsup_{n\to \infty}  \MMSE(\beta) = \lim_{n\to \infty} \frac{1}{n} \,\Exp \, \| M \|_F^2
	\label{eq:c-eq}
\end{equation}
for almost all $\beta \in [0,1]$.  Since $\MMSE(\beta)$ is a non-increasing function of $\beta$, its limit $\limsup_{n \to \infty}   \MMSE(\beta)$ is also non-increasing in $\beta$.  Therefore, \prettyref{eq:c-eq} holds for all $\beta \in [0, 1]$.  This completes the proof of our claim that the optimal estimator has the same asymptotic MMSE as the trivial one.

To show that the optimal estimator actually converges to the trivial one, we expand the definition of $\MMSE(\beta)$ in~\prettyref{eq:mmse-def} and subtract~\eqref{eq:c-eq} from it.  This gives
\begin{equation}
\limsup_{n \to \infty} \frac{1}{n} \expect{ - 2\inner{M}{\expect{M|X} } + \| \expect{M|X} \|_F^2} = 0 \, . 
\label{eq:mmse-2}
\end{equation}

Note that $\Exp \inner{M}{\expect{M|X} }$  is the expected inner product between the ground truth and a draw from the posterior.  By the Nishimori identity~\cite{iba1999nishimori} or the tower property of conditional expectation, this is equal to the expected inner product between two independent draws from the posterior.  By linearity of the inner product, this gives
$$
	\Exp \, \inner{M}{\expect{M|X} }
 	= \Exp \, \inner{\expect{M|X}}{\expect{M|X}}
 	= \Exp \, \| \expect{ M|X} \|_F^2 \, ,
$$
and combining this with~\prettyref{eq:mmse-2} gives (where $\limsup$ becomes $\liminf$ because of a sign change)
\begin{equation}
	\liminf_{n \to \infty} \frac{1}{n} \Exp \, \| \expect{M|X} \|_F^2 = 0 \, .
\end{equation}
Furthermore, for any estimator $\hat{M}=\hat{M}(X)$
 such that $\Exp_{X} \, \| \hat{M}\|^2_F   =O (n)$, 
$$
	\Exp_{M, X} \inner{M}{\hat{M}} = \Exp_{X} \, \inner{ \expect{M|X} }{\hat{M}}  \le   \Exp_{X} \left[ \| \expect{M|X} \|_F \| \hat{M}\|_F  \right]
	\le  \sqrt{ \Exp_{X} [ \| \expect{M|X} \|^2_F ] \, \Exp_{X} [\| \hat{M}\|^2_F ] }
$$
and thus the desired~\prettyref{eq:overlap_general} holds.

%

Finally, we note that if $M$ is of the form $(1/\sqrt n) UV^\dagger$ where the rows of $U$ and $V$ are independently and identically distributed according to some priors, then $\lim_{n\to \infty} \MMSE(\beta)$ exists by~\cite[Proposition III.2]{MontanariPCA14}, and the liminf in~\prettyref{eq:conditional_mean} and~\prettyref{eq:overlap_general} can be replaced by lim.  Then the MMSE estimator $\hat{M}_{\mathrm{MMSE}} = \expect{M|X}$ tends to the trivial estimator $\hat{M}=0$ as claimed.  

\subsection{Proof of \prettyref{thm:conditional_KL}}
We give the proof of i.i.d.\ Gaussian noise; the proof for Wigner noise is identical. 
Let
$$
Z(X):=\frac{\P (X) }{\Q (X)} 
=\Exp_{M} \left[ e^{ \langle M , X \rangle - \|M \|_F^2/2 } \right].
$$
By the definition of KL divergence, 
\begin{align*}
& D_{\mathrm{KL}} (\P' \| \Q)- D_{\mathrm{KL}} (\P \| \Q) \\
& = \Exp_{X \sim \P' }  \left[ \log \frac{ \P' (X) }{ \Q(X) } \right]  - \Exp_{X \sim \P}   
\left[  \log \frac{\P(X) }{ \Q(X) } \right]  \\
& = \Exp_{X \sim \P' }  \left[ \log \frac{ \P' (X) }{ \P(X) } \right]  + \Exp_{X \sim \P' } \left[  \log \frac{\P(X) }{ \Q(X) } \right] - \Exp_{X \sim \P}   
\left[  \log \frac{\P(X) }{ \Q(X) } \right] \\
& \ge \Exp_{X \sim \P' } \left[  \log Z(X) \right] -  \Exp_{X \sim \P}   
\left[  \log Z(X) \right],
\end{align*}
where the last inequality follows because 
$D_{\mathrm{KL}} (\P' \| \P) = \Exp_{X \sim \P' }  \left[ \log ( \P'(X) / \P(X) ) \right] \ge 0$.  
%
Furthermore,
\begin{align*}
&  \Exp_{X \sim \P' } \left[  \log Z(X) \right] -  \Exp_{X \sim \P}   
\left[  \log Z(X) \right] \\
& =\Exp_{\P_M} \frac{1}{\P(F_M |M) }  \Exp_{\P( X|M ) }  \left[ \left( \indicator{F_M} - \P(F_M|M) \right) \log Z(X)  \right]  \\
 &  \ge   -  \Exp_{\P_M} \frac{1}{\P(F_M |M) }  \sqrt{  \P(F_M |M) \left(1- \P(F_M |M) \right) 
  \Exp_{\P( X|M ) }  \left[  \log^2  Z(X) \right]} \\
 &  = - o(1) \times  \Exp_{\P_M} \sqrt{\Exp_{\P( X|M ) }  \left[  \log^2  Z(X) \right] },
 \end{align*}
 where we use the  Cauchy-Schwarz inequality in the third line, and the assumption that 
 $\P(F_M |M)  =1+o(1)$ uniformly over $M$ in the last line.

By the assumptions that  $\|M\|_2=O(\sqrt{n})$ and $\|M\|_\ast=O(\sqrt{n})$ uniformly over all $M$,
and in view of $\langle M, X \rangle \le \|M\|_\ast \|X\|_2$ and $ \|M\|_F^2 \le \|M\|_\ast \|M\|_2$, 
we have that $ | \log Z(X) | \le O \left( \sqrt{n} \|X\|_2 \right) +O(n).$
 Therefore, 
 $$
 \Exp_{\P( X|M ) }  \left[  \log^2  Z(X) \right] = O(n^2) + O(n) \times \Exp_{\P( X|M ) }  \left[  \|X\|_2^2 \right]
 =O(n^2) + O(n) \times \Exp_{W }  \left[  \|M +W\|_2^2 \right] = O(n^2),
 $$ 
 where the last inequality holds because  $\Exp_{W }  \left[  \|M +W\|_2^2 \right] \le 2 \|M\|_2^2 + 2\Exp_{W} \left[ \|W\|_2^2 \right] $
 and $\Exp_{W} \left[ \|W\|_2^2 \right] = O(n).$
Combining  the last three displayed equations together yields that 
 $$
 D_{\mathrm{KL}} (\P' \| \Q)- D_{\mathrm{KL}} (\P \| \Q) 
 \ge \Exp_{X \sim \P' } \left[  \log Z(X) \right] -  \Exp_{X \sim \P}   
\left[  \log Z(X) \right] \ge - o(n).
 $$

\section*{Acknowledgments}
We thank Yihong Wu, Lenka Zdeborov{\'a}, Florent Krzakala, Thibault Lesieur, Caterina de Bacco, Alex Russell, Andrea Montanari, and Cosma Shalizi for helpful discussions. C. Moore and J. Banks are supported by the John Templeton Foundation and the ARO under contract W911NF-12-R-0012. J. Xu is supported by Simons-Berkeley Research Fellowship. R. Vershynin is suppored by NSF Grant DMS 1265782 and U.S. Air Force Grant FA9550-14-1-0009.  Much of this work was done while the authors were visiting the Simons Institute for the Theory of Computing.

\bibliographystyle{abbrv}
\bibliography{sparse_pca}

\end{document}